\newtheorem{theorem}{Theorem}
\newtheorem{corollary}[theorem]{Corollary}
\newtheorem{definition}[theorem]{Definition}
\newtheorem{lemma}[theorem]{Lemma}
\newtheorem{proposition}[theorem]{Proposition}
\newtheorem{remark}[theorem]{Remark}
\newenvironment{proof}[1][Proof]{\noindent\textbf{#1.} }{\ \rule{0.5em}{0.5em}}
\begin{document}

\title{Global $L^{p}$ estimates for degenerate Ornstein-Uhlenbeck
operators\thanks{2000 MSC: primary 35H10; secondary 35B45, 35K70, 42B20. Key
words: Ornstein-Uhlenbeck operators; global $L^{p}$-estimates; hypoelliptic
operators; singular integrals; nonhomogeneous spaces. }}
\author{Marco Bramanti\\Dip. di Matematica, Politecnico di Milano\\Via Bonardi 9, 20133 Milano, Italy\\marco.bramanti@polimi.it
\and Giovanni Cupini\\Dip. di Matematica \textquotedblleft Ulisse Dini\textquotedblright%
\ Universit\`{a} degli Studi di Firenze \\Viale Morgagni 67/A, 50134 Firenze, Italy \\cupini@math.unifi.it
\and Ermanno Lanconelli\\Dip. di Matematica. Universit\`{a} di Bologna.\\Piazza di Porta S. Donato 5, 40126 Bologna, Italy\\lanconel@dm.unibo.it
\and Enrico Priola\\Dip. di Matematica, Universit\`{a} di Torino\\via Carlo Alberto 10, 10123 Torino, Italy\\enrico.priola@unito.it}
\maketitle

\begin{abstract}
We consider a class of degenerate Ornstein-Uhlenbeck operators in
$\mathbb{R}^{N}$, of the kind%
\[
\mathcal{A}\equiv\sum_{i,j=1}^{p_{0}}a_{ij}\partial_{x_{i}x_{j}}^{2}%
+\sum_{i,j=1}^{N}b_{ij}x_{i}\partial_{x_{j}}%
\]
where $\left(  a_{ij}\right)  ,\left(  b_{ij}\right)  $ are constant matrices,
$\left(  a_{ij}\right)  $ is symmetric positive definite on $\mathbb{R}%
^{p_{0}}$ ($p_{0}\leq N$), and $\left(  b_{ij}\right)  $ is such that
$\mathcal{A}$ is hypoelliptic. For this class of operators we prove global
$L^{p}$ estimates ($1<p<\infty$) of the kind:%
\[
\left\Vert \partial_{x_{i}x_{j}}^{2}u\right\Vert _{L^{p}\left(  \mathbb{R}%
^{N}\right)  }\leq c\left\{  \left\Vert \mathcal{A}u\right\Vert _{L^{p}\left(
\mathbb{R}^{N}\right)  }+\left\Vert u\right\Vert _{L^{p}\left(  \mathbb{R}%
^{N}\right)  }\right\}  \text{ for }i,j=1,2,...,p_{0}%
\]
and corresponding weak (1,1) estimates. This result seems to be the first case
of global estimates, in Lebesgue $L^{p}$ spaces, for complete H\"{o}rmander's
operators%
\[
\sum X_{i}^{2}+X_{0},
\]
proved in absence of a structure of homogeneous group. We obtain the previous
estimates as a byproduct of the following one, which is of interest in its
own:%
\[
\left\Vert \partial_{x_{i}x_{j}}^{2}u\right\Vert _{L^{p}\left(  S\right)
}\leq c\left\Vert Lu\right\Vert _{L^{p}\left(  S\right)  }%
\]
for any $u\in C_{0}^{\infty}\left(  S\right)  ,$ where $S$ is the strip
$\mathbb{R}^{N}\times\left[  -1,1\right]  $ and $L$ is the
Kolmogorov-Fokker-Planck operator $\mathcal{A}-\partial_{t}.$ To get this
estimate we crucially use the left translation invariance of $L$ on a Lie
group $\mathcal{K}$ in $\mathbb{R}^{N+1}$ and some results on singular
integrals on nonhomogeneous spaces recently proved in \cite{B}.

\end{abstract}

\section{Introduction\label{section introduction}}

\subsection*{\textbf{Problem and main result}}

Let us consider the class of degenerate Ornstein-Uhlenbeck operators in
$\mathbb{R}^{N}$:%
\[
\mathcal{A}=div\left(  A\nabla\right)  +\left\langle x,B\nabla\right\rangle
=\sum_{i,j=1}^{N}a_{ij}\partial_{x_{i}x_{j}}^{2}+\sum_{i,j=1}^{N}b_{ij}%
x_{i}\partial_{x_{j}},
\]
where $A$ and $B$ are constant $N\times N$ matrices, $A$ is symmetric and
positive semidefinite. If we define the matrix:%
\begin{equation}
C\left(  t\right)  =\int_{0}^{t}E\left(  s\right)  AE^{T}\left(  s\right)
ds\text{, where }E\left(  s\right)  =\exp\left(  -sB^{T}\right)  \label{C(t)}%
\end{equation}
then it can be proved (see \cite{LP}) the equivalence between the three conditions:

- the operator $\mathcal{A}$ is hypoelliptic;

- $C\left(  t\right)  >0$ for any $t>0;$

- the following H\"{o}rmander's condition holds:%
\[
\text{rank} \mathcal{L}\left(  X_{1},X_{2},...,X_{N},Y_{0}\right)  =N,\text{
\ \ for any }x\in\mathbb{R}^{N},
\]
where
\begin{align*}
Y_{0}  &  =\left\langle x,B\nabla\right\rangle \text{ \ \ \ and}\\
\ X_{i}  &  =\sum_{j=1}^{N}a_{ij}\partial_{x_{j}}\text{ \ }i=1,2,...,N.
\end{align*}

Under one of these conditions it is proved in \cite{LP} that, for some basis
of $\mathbb{R}^{N}$, the matrices $A,B$ take the following form:%
\begin{equation}
A=%
\begin{bmatrix}
A_{0} & 0\\
0 & 0
\end{bmatrix}
, \label{A}%
\end{equation}
with $A_{0}=\left(  a_{ij}\right)  _{i,j=1}^{p_{0}}$ $p_{0}\times p_{0}$
constant matrix ($p_{0}\leq N$), symmetric and positive definite:%
\begin{equation}
\nu\left\vert \xi\right\vert ^{2}\leq\sum_{i,j=1}^{p_{0}}a_{ij}\xi_{i}\xi
_{j}\leq\frac{1}{\nu}\left\vert \xi\right\vert ^{2} \label{ellitpicity}%
\end{equation}
for any $\xi\in\mathbb{R}^{p_{0}},$ some positive constant $\nu$;%
\begin{equation}
B=%
\begin{bmatrix}
\ast & B_{1} & 0 & \ldots & 0\\
\ast & \ast & B_{2} & \ldots & 0\\
\vdots & \vdots & \vdots & \ddots & \vdots\\
\ast & \ast & \ast & \ldots & B_{r}\\
\ast & \ast & \ast & \ldots & \ast
\end{bmatrix}
\label{B}%
\end{equation}
where $B_{j}$ is a $p_{j-1}\times p_{j}$ block with rank $p_{j},j=1,2,...,r$,
$p_{0}\geq p_{1}\geq...\geq p_{r}\geq1$ and $p_{0}+p_{1}+...+p_{r}=N$.

In this paper we consider hypoelliptic degenerate Ornstein-Uhlenbeck
operators, with the matrices $A,B$ already written as (\ref{A}) and (\ref{B}).
For this class of operators, we will prove the following global $L^{p}$ estimates:

\begin{theorem}
\label{Thm main}For any $p\in\left(  1,\infty\right)  $ there exists a
constant $c>0,$ depending on $p,N,p_{0}$, the matrix $B$ and the number $\nu$
in (\ref{ellitpicity}) such that for any $u\in C_{0}^{\infty}\left(
\mathbb{R}^{N}\right)  $ one has:
\begin{align}
\left\Vert \partial_{x_{i}x_{j}}^{2}u\right\Vert _{L^{p}\left(  \mathbb{R}%
^{N}\right)  }  &  \leq c\left\{  \left\Vert \mathcal{A}u\right\Vert
_{L^{p}\left(  \mathbb{R}^{N}\right)  }+\left\Vert u\right\Vert _{L^{p}\left(
\mathbb{R}^{N}\right)  }\right\}  \text{ for }i,j=1,2,...,p_{0}%
\label{stima L_0 a}\\
\left\Vert Y_{0}u\right\Vert _{L^{p}\left(  \mathbb{R}^{N}\right)  }  &  \leq
c\left\{  \left\Vert \mathcal{A}u\right\Vert _{L^{p}\left(  \mathbb{R}%
^{N}\right)  }+\left\Vert u\right\Vert _{L^{p}\left(  \mathbb{R}^{N}\right)
}\right\}  . \label{stima L_0 b}%
\end{align}
Moreover, the following weak $\left(  1,1\right)  $ estimates hold:%
\begin{align}
\left\vert \left\{  x\in\mathbb{R}^{N}:\left\vert \partial_{x_{i}x_{j}}%
^{2}u\left(  x\right)  \right\vert >\alpha\right\}  \right\vert  &  \leq
\frac{c_{1}}{\alpha}\left\{  \left\Vert \mathcal{A}u\right\Vert _{L^{1}\left(
\mathbb{R}^{N}\right)  }+\left\Vert u\right\Vert _{L^{1}\left(  \mathbb{R}%
^{N}\right)  }\right\} \label{weak a}\\
\left\vert \left\{  x\in\mathbb{R}^{N}:\left\vert Y_{0}u\left(  x\right)
\right\vert >\alpha\right\}  \right\vert  &  \leq\frac{c_{1}}{\alpha}\left\{
\left\Vert \mathcal{A}u\right\Vert _{L^{1}\left(  \mathbb{R}^{N}\right)
}+\left\Vert u\right\Vert _{L^{1}\left(  \mathbb{R}^{N}\right)  }\right\}
\label{weak b}%
\end{align}
for any $\alpha>0,$ some constant $c_{1}$ depending on $N,p_{0},B$ and $\nu.$
\end{theorem}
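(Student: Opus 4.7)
The strategy is to deduce the global estimates on $\mathbb{R}^{N}$ directly from the strip estimate advertised in the abstract, namely
\[
\left\Vert \partial_{x_{i}x_{j}}^{2}U\right\Vert _{L^{p}(S)} \leq c\left\Vert LU\right\Vert _{L^{p}(S)}, \qquad U\in C_{0}^{\infty}(S),
\]
where $L=\mathcal{A}-\partial_{t}$ and $S=\mathbb{R}^{N}\times[-1,1]$, together with its weak $(1,1)$ counterpart. The reduction is a tensor-product cutoff argument: given $u\in C_{0}^{\infty}(\mathbb{R}^{N})$, I would pick once and for all a $\varphi\in C_{0}^{\infty}((-1,1))$ that is identically $1$ on $[-1/2,1/2]$ and set $U(x,t)=\varphi(t)\,u(x)$. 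Then $U\in C_{0}^{\infty}(S)$, and since $\mathcal{A}$ acts only in $x$,
\[
LU(x,t)=\varphi(t)\,\mathcal{A}u(x)-\varphi'(t)\,u(x),\qquad \partial_{x_{i}x_{j}}^{2}U(x,t)=\varphi(t)\,\partial_{x_{i}x_{j}}^{2}u(x).
\]

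\textbf{Strong estimates.} Taking $L^{p}(S)$ norms, the first identity gives
\[
\left\Vert LU\right\Vert _{L^{p}(S)}\leq\left\Vert \varphi\right\Vert _{L^{p}(-1,1)}\left\Vert \mathcal{A}u\right\Vert _{L^{p}(\mathbb{R}^{N})}+\left\Vert \varphi'\right\Vert _{L^{p}(-1,1)}\left\Vert u\right\Vert _{L^{p}(\mathbb{R}^{N})},
\]
while the second yields $\Vert \partial_{x_{i}x_{j}}^{2}U\Vert _{L^{p}(S)}=\Vert \varphi\Vert _{L^{p}(-1,1)}\Vert \partial_{x_{i}x_{j}}^{2}u\Vert _{L^{p}(\mathbb{R}^{N})}$. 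Plugging into the strip estimate and dividing by the positive constant $\Vert \varphi\Vert _{L^{p}(-1,1)}$ gives (\ref{stima L_0 a}) at once, with a constant depending only on $p,N,p_{0},B,\nu$ (and on the fixed $\varphi$). Estimate (\ref{stima L_0 b}) is then a trivial consequence: since $Y_{0}u=\mathcal{A}u-\sum_{i,j=1}^{p_{0}}a_{ij}\partial_{x_{i}x_{j}}^{2}u$, the triangle inequality combined with (\ref{ellitpicity}) (which bounds $|a_{ij}|$ by $1/\nu$) and (\ref{stima L_0 a}) yields the result.

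\textbf{Weak type estimates.} For (\ref{weak a}), I would apply the weak $(1,1)$ version of the strip estimate to the same $U$. Using that $\varphi\equiv 1$ on $[-1/2,1/2]$, the superlevel set on the strip contains the cylinder $\{x:|\partial_{x_{i}x_{j}}^{2}u(x)|>\alpha\}\times[-1/2,1/2]$, so
\[
\bigl|\{x:|\partial_{x_{i}x_{j}}^{2}u(x)|>\alpha\}\bigr|\leq\frac{c}{\alpha}\left\Vert LU\right\Vert _{L^{1}(S)}\leq\frac{c'}{\alpha}\bigl\{\left\Vert \mathcal{A}u\right\Vert _{L^{1}(\mathbb{R}^{N})}+\left\Vert u\right\Vert _{L^{1}(\mathbb{R}^{N})}\bigr\},
\]
which is (\ref{weak a}). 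Estimate (\ref{weak b}) follows in the same spirit: writing $Y_{0}u$ as above, $\{|Y_{0}u|>\alpha\}\subset\{|\mathcal{A}u|>\alpha/2\}\cup\bigcup_{i,j}\{|a_{ij}\partial_{x_{i}x_{j}}^{2}u|>\alpha/(2p_{0}^{2})\}$, and Chebyshev applied to the first set together with (\ref{weak a}) applied to the others gives the claim.

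\textbf{Main obstacle.} The reduction itself is essentially a bookkeeping exercise; the substantive content of the theorem lies entirely in the strip estimate for the Kolmogorov-Fokker-Planck operator $L$. That estimate is where the Lie group $\mathcal{K}$ on $\mathbb{R}^{N+1}$, the left translation invariance of $L$, and the singular integral theory on nonhomogeneous spaces from \cite{B} will have to do the real work, because the strip $S$ is not invariant under the natural dilations and the underlying space is not of homogeneous type in the usual sense.
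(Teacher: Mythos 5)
Your reduction of Theorem \ref{Thm main} to the strip estimate via the tensor cutoff $U(x,t)=\varphi(t)\,u(x)$ is precisely the paper's argument, including the way (\ref{stima L_0 b}) and (\ref{weak b}) are derived from (\ref{stima L_0 a}) and (\ref{weak a}) through $Y_0 u = \mathcal{A}u - \sum_{i,j=1}^{p_0} a_{ij}\partial^2_{x_i x_j}u$. The one point worth flagging is that the weak $(1,1)$ counterpart of the strip estimate you invoke is not part of Theorem \ref{Thm main thm strip} as stated; the paper proves it separately inside the final argument by re-running the covering/localization scheme using the weak $(1,1)$ continuity of the localized operators $T_i$ supplied by the abstract singular-integral theorem, and your proposal correctly identifies this as the substantive remaining content.
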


Global estimates in H\"{o}lder spaces analogous to (\ref{stima L_0 a}%
)-(\ref{stima L_0 b}) have been proved by Da Prato and Lunardi \cite{DL} in
the nondegenerate case $p_{0}=N$ (corresponding to the classical
Ornstein-Uhlenbeck operator) and by Lunardi \cite{Lu} in the degenerate case;
$L^{p}$ estimates in the nondegenerate case $p_{0}=N$ have been proved by
Metafune, Pr\"{u}ss, Rhandi and Schnaubelt \cite{MPRS} by a semigroup
approach. Note that, even in the nondegenerate case, global estimates in
$L^{p}$ or H\"{o}lder spaces are not straightforward, due to the unboundedness
of the first order coefficients. Under this regard, our weak (1,1) estimate
seems to be new even in the nondegenerate case. $L^{2}$ estimates with respect
to an invariant Gaussian measure have been proved by Lunardi \cite{Lu2} in the
nondegenerate case, and by Farkas and Lunardi \cite{FL} in the degenerate case.

The operator $\mathcal{A}$ can be seen as the infinitesimal generator of the
Ornstein-Uhlenbeck semigroup. This is the Markov semigroup associated to the
stochastic differential equation:%
\begin{equation}
d\xi(t)=B^{T}\xi(t)dt+\sqrt{2}\,A_{0}^{1/2}dW\left(  t\right)
,\;\;\;t>0,\text{ \ \ }\xi(0)=x,\label{Langevin}%
\end{equation}
where $W\left(  t\right)  $ is a standard Brownian motion taking values in
$\mathbb{R}^{p_{0}}$. This equation can describe the random motion of a
particle in a fluid (see \cite{OU}). Several interpretations in physics and
finance for the operator $\mathcal{A}$ or its evolutionary counterpart $L$
(see below) are explained in the survey by Pascucci \cite{P}. Nonlocal
Ornstein-Uhlenbeck operators are studied by Priola and Zabczyk \cite{PZ}. In
infinite dimension, Ornstein-Uhlenbeck type operators arise naturally in the
study of stochastic P.D.E.s (see \cite{DZ} and \cite{DZ1}, \cite{CG} and the
references therein).

\begin{remark}
To make easier a comparison of our setting with that considered in several
papers we have quoted so far, we point out the fact that the condition
$C(t)>0$ is equivalent to the condition%
\[
Q_{t}\equiv\int_{0}^{t}\exp\left(  sB^{T}\right)  A\exp\left(  sB\right)
ds=\exp\left(  tB^{T}\right)  \,C(t)\,\exp\left(  tB\right)  >0.
\]
The operator $Q_{t}$ has also control theoretic meaning and is considered in
\cite{DL}, \cite{DZ}, \cite{DZ1}, \cite{FL}, \cite{MPRS}, \cite{PZ}. Also,
note that it is enough to require that $C(t)$ or $Q_{t}$ is positive definite
for some $t_{0}>0$ in order to get that it is positive definite for any $t>0$.
\end{remark}

\subsection*{\textbf{Relation with the evolution operator}}

The evolution operator corresponding to $\mathcal{A},$%
\[
L=\mathcal{A}-\partial_{t}\text{,}%
\]
is a Kolmogorov-Fokker-Planck ultraparabolic operator, which has been
extensively studied in the last fifteen years. The largest part of the related
literature is devoted to the case where an underlying structure of homogeneous
group is present. In absence of this structure (that is, in the general
situation we are interested in), this operator has been studied for instance
by Lanconelli and Polidoro \cite{LP}, Di Francesco and Polidoro \cite{DP},
Cinti, Pascucci and Polidoro \cite{CPP} (see also the survey \cite{LPP}, and
references therein). In particular, it is proved in \cite{LP} that the
operator $L$ is left invariant with respect to the Lie-group translation
\begin{align*}
\left(  x,t\right)  \circ\left(  \xi,\tau\right)   &  =\left(  \xi+E\left(
\tau\right)  x,t+\tau\right)  ;\\
\left(  \xi,\tau\right)  ^{-1}  &  =\left(  -E\left(  -\tau\right)  \xi
,-\tau\right)  \text{, where}\\
E\left(  \tau\right)   &  =\exp\left(  -\tau B^{T}\right)  .
\end{align*}

We will deduce global estimates (\ref{stima L_0 a}) from an analogous estimate
for $L$ on the strip
\[
S\equiv\mathbb{R}^{N}\times\left[  -1,1\right]  ,
\]
which can be of independent interest:

\begin{theorem}
\label{Thm main thm strip}For any $p\in\left(  1,\infty\right)  $ there exists
a constant $c>0$ such that%
\begin{equation}
\left\Vert \partial_{x_{i}x_{j}}^{2}u\right\Vert _{L^{p}\left(  S\right)
}\leq c\left\Vert Lu\right\Vert _{L^{p}\left(  S\right)  }\ \text{ for
}i,j=1,2,...,p_{0}, \label{Lu}%
\end{equation}
for any $u\in C_{0}^{\infty}\left(  S\right)  .$ The constant $c$ depends on
the same parameters than the $c$ in Theorem \ref{Thm main}.
\end{theorem}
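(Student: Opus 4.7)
The plan is to deduce the estimate by representing $u$ via the fundamental solution of $L$ and reducing \eqref{Lu} to the $L^p$-boundedness of a singular integral operator of convolution type on a suitable Lie group $\mathcal{K}$, then applying the nonhomogeneous Calder\'on-Zygmund theory from \cite{B}.

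First, I would recall that $L$ admits an explicit Gaussian fundamental solution
\[
\Gamma(x,t)=\frac{(4\pi)^{-N/2}}{\sqrt{\det C(t)}}\,\exp\!\left(-\tfrac14\langle C(t)^{-1}x,x\rangle\right)\mathbf{1}_{t>0},
\]
which is smooth off the origin, and that $L$ is left-invariant with respect to the group law $(x,t)\circ(\xi,\tau)=(\xi+E(\tau)x,\,t+\tau)$. For $u\in C_0^\infty(S)$ the identity $u=-\Gamma\ast_{\mathcal K}Lu$ then gives
\[
\partial^2_{x_ix_j}u(x,t)=-\mathrm{p.v.}\!\int_{\mathbb{R}^{N+1}}\!\bigl(\partial^2_{x_ix_j}\Gamma\bigr)\!\bigl((\xi,\tau)^{-1}\!\circ(x,t)\bigr)\,Lu(\xi,\tau)\,d\xi\,d\tau+\gamma_{ij}\,Lu(x,t),
\]
for $i,j=1,\dots,p_0$, with a constant $\gamma_{ij}$ coming from the distributional derivative. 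Thus the issue is the $L^p(S)$-boundedness of the singular convolution operator $T_{ij}f(x,t):=\mathrm{p.v.}\int k_{ij}((\xi,\tau)^{-1}\!\circ(x,t))f(\xi,\tau)\,d\xi\,d\tau$ with kernel $k_{ij}=\partial^2_{x_ix_j}\Gamma$, restricted to functions supported in $S$.

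Second, I would equip $\mathbb{R}^{N+1}$ with a left-invariant quasi-distance $d$ adapted to the group $\mathcal{K}$ (of the Kolmogorov-type, built through the matrix $C(t)$) and, working only on the strip $S$, verify that $k_{ij}$ satisfies the standard size and H\"ormander cancellation estimates with respect to $d$:
\[
|k_{ij}(z)|\le \frac{c}{|B_d(0,d(z,0))|},\qquad\int_{d(w,0)>2d(z,0)}\!\!\!|k_{ij}(w^{-1}\!z')-k_{ij}(w^{-1}\!z)|\,dw\le c,
\]
for $d(z,z')$ small. The cancellation property $\int_{r<d(z,0)<R}k_{ij}(z)\,dz=0$ (for suitable $r<R$) follows from the parabolic parity/scaling properties of $\Gamma$, i.e.\ from the fact that the family of dilations intrinsic to the frozen principal part is a dilation group, even though the full operator $L$ has no global homogeneous-group structure.

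Third, because there is no global homogeneous structure, the Lebesgue measure is only \emph{locally} doubling with respect to $d$: balls of radius $\lesssim 1$ are doubling but large balls are not. This is precisely the situation covered by the singular integral theory on nonhomogeneous spaces of \cite{B}. Since the $t$-variable of arguments of $T_{ij}$ is confined to $[-1,1]$, all relevant convolutions take place on bounded-$t$ regions where the quasi-distance $d$ is locally doubling; one can therefore apply the Calder\'on-Zygmund decomposition and interpolation machinery of \cite{B} to conclude that $T_{ij}$ is of weak type $(1,1)$ and strong type $(p,p)$ for $1<p<\infty$, yielding \eqref{Lu}.

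The main obstacle I anticipate is the third step: carefully setting up a quasi-distance on $\mathcal{K}$ for which the restriction to $S$ fits the nonhomogeneous framework of \cite{B}, proving the local doubling property, and verifying the standard size/H\"ormander estimates for $\partial^2_{x_ix_j}\Gamma$ uniformly in the absence of a global dilation group. The size and regularity of $\partial^2_{x_ix_j}\Gamma$ ultimately reduce to careful pointwise bounds on $C(t)^{-1}$ and $\det C(t)$ for $t$ away from $0$ and small $t$, exploiting the rank conditions on the blocks $B_j$ in \eqref{B}.
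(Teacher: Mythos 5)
Your overall strategy is the right one --- represent $\partial^2_{x_ix_j}u$ via the fundamental solution, isolate a singular convolution kernel, and apply the nonhomogeneous Calder\'on--Zygmund theory of \cite{B} --- but there are several concrete gaps, one of which is a genuine mathematical error.

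First, the cancellation claim is wrong. You state that the parabolic scaling of $\Gamma$ gives the exact vanishing
\[
\int_{r<d(z,0)<R}\partial^2_{x_ix_j}\Gamma(z)\,dz=0,
\]
but since $L$ has \emph{no} family of dilations (only its principal part $L_0$ does), this shell integral is not zero. The paper proves only boundedness: passing to a surface integral by the divergence theorem and invoking a limit result from \cite{LP} that compares the boundary integral for $\gamma$ with that for $\gamma_0$. Exact cancellation would make the whole problem trivially homogeneous; the point of the paper is that it is not.

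Second, you apply \cite{B} ``on the strip $S$'' via local doubling. But Theorem 3 of \cite{B} is a theorem about \emph{bounded} separable nonhomogeneous spaces, and $S=\mathbb{R}^N\times[-1,1]$ is unbounded in $x$. Local doubling alone does not make $S$ fit the framework. The paper has to (i) split $\partial^2_{x_ix_j}\gamma$ into a localized singular kernel $k_0=\eta\,\partial^2_{x_ix_j}\gamma$ (supported near the pole in the quasinorm) and a nonsingular tail $k_\infty=(1-\eta)\,\partial^2_{x_ix_j}\gamma$, (ii) handle $k_\infty$ by direct Gaussian-decay integrability estimates, (iii) apply \cite{B} to $k_0$ only on a $d$-ball after multiplying by H\"older cutoffs $a,b$ so that the rough cut of the domain does not destroy the (bounded) cancellation --- the paper illustrates with the Hilbert transform on $(0,1)$ why a sharp cutoff fails here --- and then (iv) patch together with a nontrivial covering lemma for the strip. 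Your proposal skips the $k_0/k_\infty$ split, the H\"older-cutoff device, and the covering argument altogether; each is essential.

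Third, the quasidistance is not simply ``built through $C(t)$.'' The crucial choice in the paper is the hybrid object $d(z,\zeta)=\|\zeta^{-1}\circ z\|$ where $\circ$ is the group law induced by $L$ (i.e.\ by $B$) while $\|\cdot\|$ is the homogeneous norm induced by $L_0$'s dilations. This is what gives translation invariance and, simultaneously, a tractable scaling; a quasidistance built naively from $C(t)$ would lack both. Finally, as a small slip, your formula for $\Gamma$ omits the factor $e^{-t\,\mathrm{Tr}\,B}$ in the exponent.
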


To get the above $L^{p}$ estimates, we have to set the problem in the suitable
geometric framework, which for this specific class of operators has been
studied in detail in \cite{LP}, \cite{DP}, while for general H\"{o}rmander's
operators, with or without an underlying structure of homogeneous group, has
been investigated by Folland \cite{Fo}, Rothschild and Stein \cite{RS}, respectively.

In particular, $L^{p}$ estimates for the second order derivatives have been
proved in \cite{Fo} on the whole space, but assuming the existence of a
homogeneous group, and in \cite{RS} in the general case, but only locally.
Therefore our results cannot be deduced by the existing theories.

Actually, Theorem \ref{Thm main} seems to be the first case of global
estimates, in Lebesgue $L^{p}$ spaces, for hypoelliptic degenerate
Ornstein-Uhlenbeck operators, and more generally for complete H\"{o}rmander's
operators%
\[
\sum X_{i}^{2}+X_{0},
\]
in absence of an underlying structure of homogeneous group. We also want to
stress that the group $\mathcal{K}=\left(  \mathbb{R}^{N+1},\circ\right)  $ is
not in general nilpotent. Hence, in view of the results in \cite{tERS}, one
cannot expect a global $L^{p}$ estimate like (\ref{Lu}) to be true on the
whole $\mathbb{R}^{N+1}$ (instead that on a strip).

Our result can also be seen as a first step to study existence and uniqueness
for the Cauchy problem related to $L$ in $L^{p}$ spaces, as well as to
characterize the domain of the generator of the Ornstein-Uhlenbeck semigroup
\textbf{ }in $L^{p}$ spaces. We plan to address these problems in the next future.

\subsection*{\textbf{Strategy of the proof}}

Let us start noting that Theorem \ref{Thm main thm strip} easily implies
Theorem \ref{Thm main}, apart from the weak estimates (\ref{weak a}),
(\ref{weak b}), which will be proved separately. Namely, let%
\[
\psi\in C_{0}^{\infty}\left(  \mathbb{R}\right)
\]
be a cutoff function fixed once and for all, sprt$\,\psi\subset\left[
-1,1\right]  ,$ $\int_{-1}^{1}\psi\left(  t\right)  dt>0$. If $u:\mathbb{R}%
^{N}\rightarrow\mathbb{R}$ is a $C_{0}^{\infty}$ solution to the equation%
\[
\mathcal{A}u=f\text{ in }\mathbb{R}^{N},
\]
for some $f\in L^{p}\left(  \mathbb{R}^{N}\right)  $, let%
\[
U\left(  x,t\right)  =u\left(  x\right)  \psi\left(  t\right)  ;
\]
then%
\[
LU\left(  x,t\right)  =f\left(  x\right)  \psi\left(  t\right)  -u\left(
x\right)  \psi^{\prime}\left(  t\right)  \equiv F\left(  x,t\right)  .
\]
Therefore Theorem \ref{Thm main thm strip} applied to $U$ gives%
\begin{equation}
\left\Vert \partial_{x_{i}x_{j}}^{2}U\right\Vert _{L^{p}\left(  S\right)
}\leq c\left\Vert F\right\Vert _{L^{p}\left(  S\right)  }\ \ \text{ for
}i,j=1,2,...,p_{0}\label{stima strip}%
\end{equation}
hence%
\[
\left\Vert \partial_{x_{i}x_{j}}^{2}u\right\Vert _{L^{p}\left(  \mathbb{R}%
^{N}\right)  }\leq c\left\{  \left\Vert f\right\Vert _{L^{p}\left(
\mathbb{R}^{N}\right)  }+\left\Vert u\right\Vert _{L^{p}\left(  \mathbb{R}%
^{N}\right)  }\right\}
\]
with $c$ also depending on $\psi.$ Note that (\ref{stima L_0 b}) follows from
(\ref{stima L_0 a}).

\bigskip

We would like to describe now the general strategy of the proof of Theorem
\ref{Thm main thm strip}, as well as the main difficulties encountered. A
basic idea is that of linking the properties of $L$ to those of another
operator of the same kind, which not only is left invariant with respect to a
suitable Lie group of translations, but is also homogeneous of degree 2 with
respect to a family of dilations (which are group automorphisms). Such an
operator $L_{0}$ (see (\ref{L_0})) always exists under our assumptions, by
\cite{LP}, and has been called \textquotedblleft the principal
part\textquotedblright\ of $L.$ Note that the operator $L_{0}$ fits the
assumptions of Folland's theory \cite{Fo}. However, to get the desired
conclusion on $L,$ this is not enough. Instead, we exploit the fact that, by
results proved by \cite{DP}, the operator $L$ possesses a fundamental solution
$\Gamma$ with some good properties. First of all, $\Gamma$ is translation
invariant and has a fast decay at infinity, in space; this allows to reduce
the desired $L^{p}$ estimates to estimates on a singular integral operator
whose kernel vanishes far off the pole. Second, this singular kernel, which
has the form $\eta\cdot\partial_{x_{i}x_{j}}^{2}\Gamma$ where $\eta$ is a
radial cutoff function, satisfies \textquotedblleft standard
estimates\textquotedblright\ (in the language of singular integrals theory)
with respect to a suitable \textquotedblleft local quasisymmetric
quasidistance\textquotedblright\ $d$, which is a key geometrical object in our
study. Namely,
\[
d\left(  z,\zeta\right)  =\left\Vert \zeta^{-1}\circ z\right\Vert
\]
where $\zeta^{-1}\circ z$ is the Lie group operation related to the operator
$L,$ while $\left\Vert \cdot\right\Vert $ is a homogeneous norm related to the
principal part operator $L_{0}$ (recall that $L$ does not have an associated
family of dilations, and therefore does not have a natural homogeneous norm).
This \textquotedblleft hybrid\textquotedblright\ quasidistance is not (and
seemingly is not equivalent to) the control distance of any family of vector
fields; even worse, it does not fulfill enough good properties in order to
apply the standard theory of \textquotedblleft singular integrals in spaces of
homogeneous type\textquotedblright\ (in the sense of Coifman-Weiss \cite{CW}).
Instead, we have to set the problem in a weaker abstract context
(\textquotedblleft bounded nonhomogeneous spaces\textquotedblright), and apply
an ad hoc theory of singular integrals to get the desired $L^{p}$ bound. The
alluded ad hoc result has been proved by one of us in \cite{B}, in the spirit
of the theory of singular integrals in nonhomogeneous spaces, which has been
developed, since the late 1990's, by Nazarov-Treil-Volberg and other authors.
With this machinery at hand, we can prove the desired $L^{p}$ estimate for the
singular integral with kernel $\eta\cdot\partial_{x_{i}x_{j}}^{2}\Gamma$
\textit{on a ball}. To get the desired estimate on the whole strip
$\mathbb{R}^{N}\times\left[  -1,1\right]  $, still another nontrivial argument
is needed, based on a covering lemma and exploiting both the existence of a
group of translations, and the relevant properties of the quasidistance $d$.

\bigskip

\textbf{Acknowledgement. }\textit{We would like to thank Vincenzo Vespri for
raising a question that led us to the present study.}

\section{Background and known results\label{section background}}

\subsection*{\textbf{The principal part operator}}

Let us consider our operator $L,$ with the matrices $A,B$ written in the form
(\ref{A}), (\ref{B}). We denote by $B_{0}$ the matrix obtained by annihilating
every $\ast$ block in (\ref{B}):%
\begin{equation}
B_{0}=%
\begin{bmatrix}
0 & B_{1} & 0 & \ldots & 0\\
0 & 0 & B_{2} & \ldots & 0\\
\vdots & \vdots & \vdots & \ddots & \vdots\\
0 & 0 & 0 & \ldots & B_{r}\\
0 & 0 & 0 & \ldots & 0
\end{bmatrix}
\label{B_0}%
\end{equation}
with $B_{j}$ as in (\ref{B}). By \textit{principal part }of $L$ we mean the
operator%
\begin{equation}
L_{0}=div\left(  A\nabla\right)  +\left\langle x,B_{0}\nabla\right\rangle
-\partial_{t}. \label{L_0}%
\end{equation}
For any $\lambda>0,$ let us define the matrix of \textit{dilations on
}$\mathbb{R}^{N},$%
\[
D\left(  \lambda\right)  =diag\left(  \lambda I_{p_{0}},\lambda^{3}I_{p_{1}%
},...,\lambda^{2r+1}I_{p_{r}}\right)
\]
where $I_{p_{j}}$ denotes the $p_{j}\times p_{j}$ identity matrix, and the
matrix of \textit{dilations on }$\mathbb{R}^{N+1},$%
\[
\delta\left(  \lambda\right)  =diag\left(  \lambda I_{p_{0}},\lambda
^{3}I_{p_{1}},...,\lambda^{2r+1}I_{p_{r}},\lambda^{2}\right)  .
\]
Note that%
\[
\det\left(  \delta\left(  \lambda\right)  \right)  =\lambda^{Q+2}%
\]
where%
\[
Q+2=p_{0}+3p_{1}+...+\left(  2r+1\right)  p_{r}+2
\]
is called \textit{homogeneous dimension of} $\mathbb{R}^{N+1}.$ Analogously,%
\[
\det\left(  D\left(  \lambda\right)  \right)  =\lambda^{Q}%
\]
and $Q$ is called \textit{homogeneous dimension of} $\mathbb{R}^{N}.$ A
remarkable fact proved in \cite{LP} is that the operator $L_{0}$ is
homogeneous of degree two with respect to the dilations $\delta\left(
\lambda\right)  ,$ which by definition means that%
\[
L_{0}\left(  u\left(  \delta\left(  \lambda\right)  z\right)  \right)
=\lambda^{2}\left(  L_{0}u\right)  \left(  \delta\left(  \lambda\right)
z\right)  \text{ \ }%
\]
for any $u\in C_{0}^{\infty}\left(  \mathbb{R}^{N+1}\right)  ,z\in
\mathbb{R}^{N+1},\lambda>0.$

If we define%
\begin{equation}
C_{0}\left(  t\right)  =\int_{0}^{t}E_{0}\left(  s\right)  AE_{0}^{T}\left(
s\right)  ds\text{, where }E_{0}\left(  s\right)  =\exp\left(  -sB_{0}%
^{T}\right)  \label{C_0(t)}%
\end{equation}
then the operator $L_{0}$ turns out to be left invariant with respect to the
associated translations:%
\begin{align*}
\left(  x,t\right)  \odot\left(  \xi,\tau\right)   &  =\left(  \xi
+E_{0}\left(  \tau\right)  x,t+\tau\right)  ;\\
\left(  \xi,\tau\right)  ^{-1}  &  =\left(  -E_{0}\left(  -\tau\right)
\xi,-\tau\right)  \text{.}%
\end{align*}
Moreover, the dilations $z\mapsto\delta\left(  \lambda\right)  z$ are
automorphisms for the group $\left(  \mathbb{R}^{N+1},\odot\right)  $

There is a natural homogeneous norm in $\mathbb{R}^{N+1},$ induced by these
dilations:%
\[
\left\Vert \left(  x,t\right)  \right\Vert =\sum_{j=1}^{N}\left\vert
x_{j}\right\vert ^{1/q_{j}}+\left\vert t\right\vert ^{1/2}%
\]
where $q_{j}$ are positive integers such that $D\left(  \lambda\right)
=diag\left(  \lambda^{q_{1}},...,\lambda^{q_{N}}\right)  .$ Clearly, we have%
\[
\left\Vert \delta\left(  \lambda\right)  z\right\Vert =\lambda\left\Vert
z\right\Vert \text{ \ \ for any }\lambda>0,z\in\mathbb{R}^{N+1}.
\]
Other properties of $\left\Vert \cdot\right\Vert $ will be stated later.

\subsection*{\textbf{Fundamental solution}}

The following theorem collects some important known result about the
fundamental solution of $L$:

\begin{theorem}
Under the assumptions stated in the Introduction, the operator $L$ possesses a
fundamental solution%
\[
\Gamma\left(  z,\zeta\right)  =\gamma\left(  \zeta^{-1}\circ z\right)  \text{
for }z,\zeta\in\mathbb{R}^{N+1},
\]
with%
\begin{equation}
\gamma\left(  z\right)  =\left\{
\begin{array}
[c]{l}%
0\text{ for }t\leq0\\
\frac{\left(  4\pi\right)  ^{-N/2}}{\sqrt{\det C\left(  t\right)  }}%
\exp\left(  -\frac{1}{4}\left\langle C^{-1}\left(  t\right)  x,x\right\rangle
-t\text{Tr}B\right)  \text{ \ for }t>0
\end{array}
\right. \nonumber
\end{equation}
where $z=\left(  x,t\right)  $ and $C\left(  t\right)  $ is as in
(\ref{C(t)}). Recall that $C\left(  t\right)  $ is positive definite for any
$t>0$; hence $\gamma\in C^{\infty}\left(  \mathbb{R}^{N+1}\backslash\left\{
0\right\}  \right)  .$ The following representation formulas hold:%
\begin{align}
u\left(  z\right)   &  =-\left(  \gamma\ast Lu\right)  \left(  z\right)
=-\int_{\mathbb{R}^{N+1}}\gamma\left(  \zeta^{-1}\circ z\right)  Lu\left(
\zeta\right)  d\zeta;\label{repre formula 1}\\
\partial_{x_{i}x_{j}}^{2}u\left(  z\right)   &  =-PV\left(  \partial
_{x_{i}x_{j}}^{2}\gamma\ast Lu\right)  \left(  z\right)  +c_{ij}Lu\left(
z\right)  \label{repre formula 2}%
\end{align}
for any $u\in C_{0}^{\infty}\left(  \mathbb{R}^{N+1}\right)  ,$
$i,j=1,2,...,p_{0},$ for suitable constants $c_{ij}$ which we do not need to
specify. The \textquotedblleft principal value\textquotedblright\ in
(\ref{repre formula 2}) must be understood as%
\[
PV\left(  \partial_{x_{i}x_{j}}^{2}\gamma\ast Lu\right)  \left(  z\right)
\equiv\lim_{\varepsilon\rightarrow0}\int_{\left\Vert \zeta^{-1}\circ
z\right\Vert >\varepsilon}\left(  \partial_{x_{i}x_{j}}^{2}\gamma\right)
\left(  \zeta^{-1}\circ z\right)  Lu\left(  \zeta\right)  d\zeta.
\]

\end{theorem}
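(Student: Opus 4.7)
The plan is to split the statement into three parts: (i) the explicit formula for $\gamma$ together with the identification $\Gamma(z,\zeta)=\gamma(\zeta^{-1}\circ z)$; (ii) the representation formula (\ref{repre formula 1}); (iii) the singular-integral representation (\ref{repre formula 2}).

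For (i), I would construct $\gamma$ as the transition density of the stochastic process (\ref{Langevin}). The SDE is linear, so its solution is a Gaussian random variable whose mean involves $\exp(tB^{T})x$ and whose covariance can be computed in closed form; a direct change of variables matching the group operation $\zeta^{-1}\circ z$ converts the resulting Gaussian density (with covariance $Q_{t}$) into the closed-form expression in the statement (the factor $e^{-t\operatorname{Tr}B}$ comes from the Jacobian relating $Q_{t}$ and $C(t)$). Alternatively, one can take the stated $\gamma$ as \emph{defined} by the formula and verify directly, by differentiation, that $L\gamma=0$ on $\{t>0\}$ and that $\gamma(\cdot,t)\rightharpoonup\delta_{0}$ as $t\to0^{+}$ in the sense of distributions; positive definiteness of $C(t)$ for every $t>0$ (\textit{i.e.} hypoellipticity of $\mathcal{A}$) guarantees smoothness off the pole. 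Left invariance of $L$ under $\circ$ then promotes the fundamental solution with pole at the origin to the two-point kernel $\Gamma(z,\zeta)=\gamma(\zeta^{-1}\circ z)$ by a standard argument.

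For (ii), fix $u\in C_{0}^{\infty}(\mathbb{R}^{N+1})$. The kernel $\gamma$ is locally integrable and decays very fast in $x$ for fixed $t$, so the convolution $\gamma\ast Lu$ is well defined and smooth in $z$. Since $L$ is left invariant, the derivative $L$ can be moved inside the convolution: using $L\gamma=-\delta_{0}$ distributionally and the associativity of the group convolution, one obtains $L(-\gamma\ast Lu)=Lu$. Because both $u$ and $-\gamma\ast Lu$ are bounded on $\mathbb{R}^{N+1}$ and vanish as $t\to-\infty$ (by support considerations for $u$ and by the one-sided temporal support of $\gamma$), a standard uniqueness argument for the Cauchy problem of $L$ forces equality, giving (\ref{repre formula 1}).

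For (iii), I would differentiate (\ref{repre formula 1}) with respect to $x_{i},x_{j}$ for $i,j\le p_{0}$. The first derivative $\partial_{x_{i}}[\gamma(\zeta^{-1}\circ z)]$ is still locally integrable in $\zeta$ (its homogeneous order at the pole is $Q+1<Q+2$), so one passes $\partial_{x_{i}}$ inside the integral without further ado. For the second derivative, truncate to $\{\|\zeta^{-1}\circ z\|>\varepsilon\}$, differentiate, and integrate by parts on the boundary of the truncation region; letting $\varepsilon\to0^{+}$ the interior part produces the principal-value integral $-PV(\partial_{x_{i}x_{j}}^{2}\gamma\ast Lu)(z)$, while the surface term on the quasi-sphere $\{\|\zeta^{-1}\circ z\|=\varepsilon\}$ converges to $c_{ij}Lu(z)$, with $c_{ij}$ a universal surface average of $(\partial_{x_{i}}\gamma)\nu_{j}$ independent of $u$ and $z$.

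The main obstacle is step (iii): showing that the principal value actually exists, which amounts to a cancellation property of $\partial_{x_{i}x_{j}}^{2}\gamma$ on the quasi-spheres $\{\|z\|=r\}$ compensating its critical homogeneous order $-(Q+2)$. Together with the required pointwise decay of $\partial_{x_{i}x_{j}}^{2}\gamma$ (used throughout the subsequent singular-integral analysis in the paper), this is the delicate point. Both the cancellation and the pointwise bounds rest on the Gaussian structure of $\gamma$ and on the precise behavior of $C(t)^{-1}$ as $t\to0^{+}$ and $t\to+\infty$; they have been worked out in detail in \cite{LP} and \cite{DP}, which I would invoke to close the proof rather than rederive here.
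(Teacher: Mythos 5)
The paper does not actually prove this theorem: it states it and then attributes the fundamental solution and formula (\ref{repre formula 1}) to \cite{H} (see also \cite{LP}), and the principal-value formula (\ref{repre formula 2}) to \cite[Proposition 2.11]{DP}. Your proposal follows essentially the same route — it sketches what those references do (construct $\gamma$ from the linear SDE's Gaussian transition density, or verify $L\gamma=0$ with initial datum $\delta_0$; lift to $\Gamma$ by left invariance; differentiate once under the integral since $\partial_{x_i}\gamma$ has integrable homogeneous order $-(Q+1)$; truncate and extract a boundary term for the second derivative) and then defers the genuinely delicate points, namely the cancellation on quasi-spheres and the uniform near-homogeneous bounds on $\partial^2_{x_i x_j}\gamma$, to \cite{LP} and \cite{DP}, exactly as the paper does. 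One small imprecision worth flagging: the constant $c_{ij}$ arises as a limit of surface integrals and, because $\gamma$ is only \emph{approximately} homogeneous near the pole, the limiting surface average involves the principal-part kernel $\gamma_0$ rather than $\gamma$ itself; this is the content of \cite[Lemma 2.10]{LP}, used later in the paper's Lemma \ref{Lemma cancellation}. This does not affect your strategy, and the rest of the sketch is sound.
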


The above theorem is proved in \cite{H} (see also \cite{LP}), apart from
(\ref{repre formula 2}) which is proved in \cite[Proposition 2.11]{DP}.

The fundamental solution $\Gamma_{0}\left(  z,\zeta\right)  =\gamma_{0}\left(
\zeta^{-1}\circ z\right)  $ of the principal part operator $L_{0}$ enjoys
special properties; namely, for $t>0$%
\begin{equation}
\gamma_{0}\left(  x,t\right)  =\frac{\left(  4\pi\right)  ^{-N/2}}{\sqrt{\det
C_{0}\left(  t\right)  }}\exp\left(  -\frac{1}{4}\left\langle C_{0}%
^{-1}\left(  t\right)  x,x\right\rangle \right)  \label{gamma star}%
\end{equation}
with $C_{0}\left(  t\right)  $ as in (\ref{C_0(t)}); moreover (see
\cite[p.42]{LP}),
\begin{equation}
C_{0}\left(  \lambda^{2}t\right)  =D\left(  \lambda\right)  C_{0}\left(
t\right)  D\left(  \lambda\right)  \text{ \ \ \ }\forall\lambda,t>0
\label{C_0 dilat}%
\end{equation}
from which we can see that $\gamma_{0}$ is homogeneous of degree $-Q$:%
\[
\gamma_{0}\left(  \delta\left(  \lambda\right)  \left(  x,t\right)  \right)
=\lambda^{-Q}\gamma_{0}\left(  x,t\right)  \text{ \ \ }\forall\lambda
>0,\left(  x,t\right)  \in\mathbb{R}^{N+1}\setminus\left\{  \left(
0,0\right)  \right\}  .
\]
Furthermore, the following relation links $L$ to $L_{0}$ (see \cite[Lemma
3.3]{LP}):%
\begin{align}
\left\langle C\left(  t\right)  x,x\right\rangle  &  =\left\langle
C_{0}\left(  t\right)  x,x\right\rangle \left(  1+O\left(  t\right)  \right)
\text{ for }t\rightarrow0;\label{C-C_0 a}\\
\left\langle C^{-1}\left(  t\right)  x,x\right\rangle  &  =\left\langle
C_{0}^{-1}\left(  t\right)  x,x\right\rangle \left(  1+O\left(  t\right)
\right)  \text{ for }t\rightarrow0; \label{C-C_0 b}%
\end{align}
and (see \cite[eqt. (3.14)]{LP}):
\begin{equation}
\det C\left(  t\right)  =\det C_{0}\left(  t\right)  \left(  1+O\left(
t\right)  \right)  \text{ for }t\rightarrow0. \label{DetC-DetC_0}%
\end{equation}

\section{Estimate on the nonsingular part of the
integral\label{section nonsingular}}

We now localize the singular kernel appearing in (\ref{repre formula 2})
introducing a cutoff function
\begin{align*}
\eta &  \in C_{0}^{\infty}\left(  \mathbb{R}^{N+1}\right)  \text{ such that}\\
\eta\left(  z\right)   &  =1\text{ for }\left\Vert z\right\Vert \leq\rho
_{0}/2;\\
\eta\left(  z\right)   &  =0\text{ for }\left\Vert z\right\Vert \geq\rho_{0},
\end{align*}
where $\rho_{0}<1$ will be fixed later.

Let us rewrite (\ref{repre formula 2}) as:%
\begin{align}
\partial_{x_{i}x_{j}}^{2}u  &  =-PV\left(  \left(  \eta\partial_{x_{i}x_{j}%
}^{2}\gamma\right)  \ast Lu\right)  -\left(  \left(  1-\eta\right)
\partial_{x_{i}x_{j}}^{2}\gamma\ast Lu\right)  +c_{ij}Lu\label{repr formula}\\
&  \equiv-PV\left(  k_{0}\ast Lu\right)  -\left(  k_{\infty}\ast Lu\right)
+c_{ij}Lu\nonumber
\end{align}
having set:%
\begin{align}
k_{0}  &  =\eta\partial_{x_{i}x_{j}}^{2}\gamma\label{k_0}\\
k_{\infty}  &  =\left(  1-\eta\right)  \partial_{x_{i}x_{j}}^{2}%
\gamma\nonumber
\end{align}
for any $i,j=1,2,...,p_{0}$ (we will left implicitly understood the dependence
of the kernels $k_{0},k_{\infty}$ on these indices $i,j,$ as well as on the
number $\rho_{0}$ appearing in the definition of the cutoff function $\eta$).

Since in $k_{\infty}$ the singularity of $\partial_{x_{i}x_{j}}^{2}\gamma$ has
been removed and $\partial_{x_{i}x_{j}}^{2}\gamma$ has a fast decay as
$x\rightarrow\infty,$ we can prove the following:

\begin{proposition}
\label{Prop k_infty}For any $\rho_{0}>0\ $there exists \textbf{ }$c=c(\rho
_{0})>0$ such that for any $z\in S$%
\begin{align}
\int_{S}\left\vert k_{\infty}\left(  \zeta^{-1}\circ z\right)  \right\vert
d\zeta &  \leq c\label{k infty 1}\\
\int_{S}\left\vert k_{\infty}\left(  z^{-1}\circ\zeta\right)  \right\vert
d\zeta &  \leq c. \label{k infty 2}%
\end{align}

\end{proposition}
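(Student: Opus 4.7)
The plan is to reduce both integrals, via a change of variable in the group $(\mathbb{R}^{N+1},\circ)$, to the single estimate
\[
\int_{0}^{2}\int_{\mathbb{R}^{N}}\bigl|(1-\eta)(y,s)\,\partial_{x_{i}x_{j}}^{2}\gamma(y,s)\bigr|\,dy\,ds \leq c(\rho_{0}),
\]
and then to prove the latter using the explicit Gaussian form of $\gamma$ together with the fact that $(1-\eta)$ vanishes in a neighborhood of the origin of the homogeneous norm.

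For the bound (\ref{k infty 2}) I would substitute $w=z^{-1}\circ\zeta$. A direct Jacobian computation on $(\mathbb{R}^{N+1},\circ)$ shows that Lebesgue measure is left-invariant Haar for $\circ$, so $d\zeta=dw$; writing $w=(y,s)$, the condition $\zeta\in S$ translates to $t+s\in[-1,1]$, and since $|t|\leq 1$ and $\gamma$ (hence $k_{\infty}$) vanishes for $s\leq 0$, the integral is directly dominated by the displayed reduced expression. For (\ref{k infty 1}) the substitution $w=\zeta^{-1}\circ z$ is not Haar-preserving, but its Jacobian turns out to be $e^{s\,\mathrm{Tr}\,B}$, bounded on $s\in[-2,2]$; the same reduction applies, up to a harmless multiplicative constant.

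To prove the reduced estimate I would split the $s$-range at the threshold $s_{0}:=(\rho_{0}/4)^{2}$. On $[s_{0},2]$ the matrix $C(s)$ is continuous and uniformly positive definite, and $\det C(s)$ is bounded away from zero, hence $y\mapsto\partial_{x_{i}x_{j}}^{2}\gamma(y,s)$ is Schwartz in $y$ with $L^{1}$-norm bounded uniformly in $s\in[s_{0},2]$; integration over this compact interval is then trivial. On $(0,s_{0})$ the support condition $\|(y,s)\|\geq\rho_{0}/2$, combined with $s^{1/2}\leq\rho_{0}/4$, forces $|y_{j}|\geq c(\rho_{0})$ for some index $j\in\{1,\dots,N\}$. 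Using (\ref{C-C_0 a})--(\ref{DetC-DetC_0}) to replace $C(s)$ by $C_{0}(s)$ for small $s$, and the dilation identity (\ref{C_0 dilat}) to rewrite $\langle C_{0}^{-1}(s)y,y\rangle=\langle C_{0}^{-1}(1)D(1/\sqrt{s})y,D(1/\sqrt{s})y\rangle$, the Gaussian exponent is bounded below by $c(\rho_{0})/s^{q_{j}}$, which blows up as $s\downarrow 0$. Explicit differentiation of $\gamma$ yields polynomial-in-$y$ factors together with negative powers of $s$ from $(C^{-1}(s))_{ij}$ and from $\det C(s)^{-1/2}$, but these are absorbed by the super-polynomial Gaussian decay $\exp(-c(\rho_{0})/s^{q_{j}})$, producing an integrand that is bounded uniformly on $(0,s_{0})$ by an integrable function of $s$ alone.

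The main obstacle is the small-$s$ regime: $\partial_{x_{i}x_{j}}^{2}\gamma$ genuinely blows up as $s\downarrow 0$, and one has to quantify how the anisotropic lower bound on $y$ produced by the cutoff exactly compensates this. The crucial technical ingredient is the dilation identity (\ref{C_0 dilat}) for the principal-part covariance, which converts the distance-from-origin condition into a diverging lower bound on the quadratic form appearing inside the Gaussian exponent.
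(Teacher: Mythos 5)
Your proposal is correct and follows essentially the same path as the paper's proof: reduce both integrals to a single one over $\mathbb{R}^N\times(0,2)$ via the Jacobian Lemma (Lemma \ref{Lemma Jacobian}); exploit the support of $1-\eta$ to split the integral into an ``easy'' region and a ``small-time'' region; and, in the small-time region, use the dilation identity (\ref{C_0 dilat}) together with the comparisons (\ref{C-C_0 a})--(\ref{DetC-DetC_0}) to show that the Gaussian exponent blows up like $c/s$ on the support, so that a factor $\exp(-c/s)$ can be peeled off to absorb the negative powers of $s$ coming from $\partial^2_{x_ix_j}\gamma$.

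The only differences are organizational. The paper proves (\ref{k infty 1}) first and then deduces (\ref{k infty 2}) from it by a second change of variable, whereas you treat the two substitutions symmetrically from the start (noting that Lebesgue measure is left-invariant Haar, which is a nice observation). The paper partitions the domain by whether $\|(x,0)\|\leq 1/4$ or $>1/4$, while you partition by whether $s\geq s_0$ or $s<s_0$; both partitions, combined with the support condition $\|(y,s)\|\geq\rho_0/2$, isolate the same critical ``small $s$, large $y$'' regime. One place where you are a bit more terse than the paper is in the pointwise bound on $\partial^2_{x_ix_j}\gamma$: the paper explicitly computes the derivatives and uses $i,j\le p_0$ to get $\langle C^{-1}(s)e_j,e_j\rangle\lesssim 1/s$, which sharpens the ``negative powers of $s$'' you mention; your argument survives without this sharpening because the $\exp(-c/s)$ factor is super-polynomial, but it is worth being aware that the restriction $i,j\le p_0$ is what keeps those negative powers tame. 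Also, the index $j$ for which $|y_j|\geq c(\rho_0)$ depends on $y$, so the exponent lower bound is really $c/s^{\min_j q_j}=c/s$; this is still enough, but the wording $c/s^{q_j}$ slightly overstates the bound you can assert uniformly.
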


Note that this proposition immediately implies the following

\begin{corollary}
\label{Coroll nonsingular}For any $p\in\left[  1,\infty\right]  $ there exists
a constant $c>0$ only depending on $p,N,p_{0},\nu$ and the matrix $B$ such
that:%
\begin{equation}
\left\Vert -\left(  k_{\infty}\ast Lu\right)  +c_{ij}Lu\right\Vert
_{L^{p}\left(  S\right)  }\leq c\left\Vert Lu\right\Vert _{L^{p}\left(
S\right)  }\text{ for any }u\in C_{0}^{\infty}\left(  S\right)  ,
\label{bound k infty}%
\end{equation}
any $i,j=1,...,p_{0}.$
\end{corollary}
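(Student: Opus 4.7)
The plan is to view the integral
$$(k_\infty \ast Lu)(z) = \int_{\mathbb{R}^{N+1}} k_\infty(\zeta^{-1} \circ z)\, Lu(\zeta)\, d\zeta$$
as an integral operator $T f(z) = \int_S K(z,\zeta) f(\zeta)\, d\zeta$ with kernel $K(z,\zeta) = k_\infty(\zeta^{-1} \circ z)$. Since $u \in C_0^\infty(S)$, the function $Lu$ has compact support in $S$, so the effective domain of integration is $\zeta \in S$ and it suffices to bound $T$ as an operator on $L^p(S)$. I will apply the Schur test, which yields boundedness on $L^p(S)$ for every $p \in [1, \infty]$ once both
$$\sup_{z \in S} \int_S |K(z,\zeta)|\, d\zeta \leq c \qquad \text{and} \qquad \sup_{\zeta \in S} \int_S |K(z,\zeta)|\, dz \leq c$$
are established.

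The first condition is exactly (\ref{k infty 1}). For the second, I would fix $\zeta \in S$ and change variables $w = \zeta^{-1} \circ z$, equivalently $z = \zeta \circ w$. From the explicit group law of $\mathcal{K}$ recalled in Section \ref{section background}, the map $w \mapsto \zeta \circ w$ has Jacobian matrix upper-triangular with unit diagonal, so Lebesgue measure is left-invariant under $\circ$ and $dz = dw$. Thus
$$\int_S |k_\infty(\zeta^{-1} \circ z)|\, dz = \int_{\zeta^{-1} \circ S} |k_\infty(w)|\, dw,$$
and the very same change of variables applied to (\ref{k infty 2}) gives $\int_{z^{-1} \circ S} |k_\infty(w)|\, dw \leq c$ uniformly in $z \in S$. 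Renaming $z \leftrightarrow \zeta$ produces the second Schur condition.

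Combining the two bounds, Schur's test yields $\|k_\infty \ast Lu\|_{L^p(S)} \leq c\|Lu\|_{L^p(S)}$ for every $p \in [1,\infty]$, and the pointwise term $c_{ij} Lu$ is dominated trivially by $|c_{ij}|\|Lu\|_{L^p(S)}$; summing the two estimates gives (\ref{bound k infty}). Once Proposition \ref{Prop k_infty} is granted there is essentially no hard step here; the only ingredient one must supply beyond the standard Schur test is the left-invariance of Lebesgue measure under the group translation of $\mathcal{K}$, which is a one-line computation from the explicit formula for $(x,t)\circ(\xi,\tau)$.
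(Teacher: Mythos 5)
Your argument is correct and is exactly the route the paper intends when it says that Proposition \ref{Prop k_infty} ``immediately implies'' the corollary: the two $L^1$ bounds of the proposition are precisely the two Schur conditions, and the pointwise term $c_{ij}Lu$ is trivial. One small simplification you could have used: the second Schur condition $\sup_{\zeta\in S}\int_S |k_\infty(\zeta^{-1}\circ z)|\,dz\leq c$ is \emph{literally} inequality (\ref{k infty 2}) with the dummy and free variables swapped, so no change of variables or invocation of left-invariance of Lebesgue measure is needed at that point; your roundabout detour through $\int_{\zeta^{-1}\circ S}|k_\infty(w)|\,dw$ is nevertheless valid, since the Jacobian computation you cite (the map $w\mapsto \zeta\circ w$ has block-triangular Jacobian with unit determinant) is correct.
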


Before proving the proposition we need an easy lemma to handle a typical
change of variables in convolutions. It turns out that the Lebesgue measure is
invariant with respect to left translations, but not with respect to the
inversion $\zeta\longmapsto\zeta^{-1}$:

\begin{lemma}
\label{Lemma Jacobian}If we set $\zeta^{-1}\circ z=w=\left(  \xi,\tau\right)
,$ then the following identity holds for the Jacobian of the map
$w\longmapsto\zeta$ (for fixed $z$):%
\begin{equation}
d\zeta=e^{\tau\text{Tr}B}dw. \label{Jacob}%
\end{equation}

\end{lemma}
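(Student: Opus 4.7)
The plan is to express $\zeta$ explicitly in terms of $w=(\xi,\tau)$, holding $z=(y,s)$ fixed, and then compute the Jacobian of that linear–plus–nonlinear map by direct differentiation.

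First I would unwind the defining relation $\zeta^{-1}\circ z = w$. Writing $\zeta=(\eta,\sigma)$ and using the explicit formula for the group inverse and product stated in the paper, $\zeta^{-1}=(-E(-\sigma)\eta,-\sigma)$ and $(\zeta^{-1})\circ z=(y-E(s-\sigma)\eta,\,s-\sigma)$, so equating with $w=(\xi,\tau)$ gives
\[
\tau = s-\sigma, \qquad \xi = y - E(\tau)\eta,
\]
which I can invert to
\[
\sigma = s-\tau, \qquad \eta = E(-\tau)(y-\xi).
\]

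Next I would form the Jacobian matrix of $w\mapsto\zeta$ in block form, with $(\eta,\sigma)$ as rows and $(\xi,\tau)$ as columns. Differentiating, $\partial\eta/\partial\xi = -E(-\tau)$, $\partial\sigma/\partial\tau = -1$, $\partial\sigma/\partial\xi = 0$, and the $\partial\eta/\partial\tau$ block (which I do not actually need to compute exactly) contributes nothing to the determinant because the bottom row is $(0,\ldots,0,-1)$. Expanding along that last row yields
\[
\Bigl|\det \tfrac{\partial\zeta}{\partial w}\Bigr|
 = \bigl|\det(-E(-\tau))\bigr|
 = |\det E(-\tau)|.
\]

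Finally, since $E(-\tau)=\exp(\tau B^{T})$, the classical identity $\det\exp(M)=\exp(\operatorname{Tr} M)$ gives $\det E(-\tau)=\exp(\tau\operatorname{Tr} B^{T})=e^{\tau\operatorname{Tr} B}$, which is positive, so $d\zeta = e^{\tau\operatorname{Tr} B}\,dw$, as claimed. No real obstacle is expected here; the only point requiring care is bookkeeping with the sign conventions in the group law and making sure the upper‑triangular block structure of the Jacobian is correctly identified so that the (a priori messy) $\partial\eta/\partial\tau$ entries do not enter the determinant.
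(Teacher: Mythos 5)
Your proof is correct and takes essentially the same route as the paper: you solve $\zeta^{-1}\circ z=w$ for $\zeta$, which gives exactly $\zeta=z\circ w^{-1}=(E(-\tau)(x-\xi),\,t-\tau)$, and then compute the block upper-triangular Jacobian and use $\det\exp(M)=e^{\operatorname{Tr} M}$. The paper simply starts directly from $\zeta=z\circ w^{-1}$ rather than deriving it by inversion, but the computation and conclusion are identical.
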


\begin{proof}
[Proof of the Lemma]Setting
\begin{align*}
\zeta^{-1}\circ z  &  =w;\\
\zeta &  =z\circ w^{-1},
\end{align*}
let us compute the Jacobian matrix of the map $w\longmapsto z\circ w^{-1}.$ If
$z=\left(  x,t\right)  ,w=\left(  \xi,\tau\right)  $ we have%
\[
z\circ w^{-1}=\left(  -E\left(  -\tau\right)  \xi+E\left(  -\tau\right)
x,t-\tau\right)  \text{,}%
\]
and the Jacobian is%
\[
J=%
\begin{bmatrix}
-E\left(  -\tau\right)  & \ast\\
0 & -1
\end{bmatrix}
\]
with determinant%
\[
\text{Det}J=\text{Det}\exp\left(  \tau B^{T}\right)  =e^{\tau\text{Tr}%
B}\text{.}%
\]

\end{proof}

\begin{proof}
[Proof of Proposition \ref{Prop k_infty}]Since we are not interested in the
exact dependence of the constant $c$ on $\rho_{0},$ for the sake of simplicity
we will prove the Proposition for $\rho_{0}=1$. An analogous proof can be done
for any $\rho_{0}$, finding a constant $c$ which depends on $\rho_{0}$.

Note that (\ref{k infty 2}) immediately follows by (\ref{k infty 1}), with the
change of variables $z^{-1}\circ\zeta=w^{-1}$ and applying the above Lemma.
So, let us prove (\ref{k infty 1}).

Recalling that, for $t>0,$ we have%
\[
\gamma\left(  x,t\right)  =\frac{\left(  4\pi\right)  ^{-N/2}}{\sqrt{\det
C\left(  t\right)  }}\exp\left(  -\frac{1}{4}\left\langle C^{-1}\left(
t\right)  x,x\right\rangle -t\text{Tr}B\right)  ,
\]
let us compute:%
\begin{align*}
\left(  \partial_{x_{i}}\gamma\right)  \left(  x,t\right)   &  =-\frac{1}%
{2}\gamma\left(  x,t\right)  \left\langle C^{-1}\left(  t\right)
x,e_{i}\right\rangle \\
\left(  \partial_{x_{i}x_{j}}^{2}\gamma\right)  \left(  x,t\right)   &
=\frac{1}{2}\gamma\left(  x,t\right)  \left\{  \frac{1}{2}\left\langle
C^{-1}\left(  t\right)  x,e_{j}\right\rangle \left\langle C^{-1}\left(
t\right)  x,e_{i}\right\rangle -\left\langle C^{-1}\left(  t\right)
e_{j},e_{i}\right\rangle \right\}
\end{align*}
(where we have denoted by $e_{i}$ the $i$-th unit vector in $\mathbb{R}^{N}$).
Since the matrix $C^{-1}\left(  t\right)  $ is symmetric and positive
definite, we can bound%
\[
\left\vert \left\langle C^{-1}\left(  t\right)  x,e_{j}\right\rangle
\right\vert \leq\left\langle C^{-1}\left(  t\right)  x,x\right\rangle
^{1/2}\left\langle C^{-1}\left(  t\right)  e_{j},e_{j}\right\rangle ^{1/2}.
\]
By (\ref{C-C_0 b}) and (\ref{C_0 dilat}) we have:%
\begin{equation}
\left\langle C^{-1}\left(  t\right)  e_{j},e_{j}\right\rangle =\left\langle
C_{0}^{-1}\left(  t\right)  e_{j},e_{j}\right\rangle \left(  1+O\left(
t\right)  \right)  \text{ \ \ for }t\rightarrow0 \label{C-C0}%
\end{equation}
and%
\begin{align*}
\left\langle C_{0}^{-1}\left(  t\right)  e_{j},e_{j}\right\rangle  &
=\left\langle C_{0}^{-1}\left(  1\right)  D\left(  \frac{1}{\sqrt{t}}\right)
e_{j},D\left(  \frac{1}{\sqrt{t}}\right)  e_{j}\right\rangle \leq c\left\vert
D\left(  \frac{1}{\sqrt{t}}\right)  e_{j}\right\vert ^{2}=\\
\text{(since }j  &  \in\left\{  1,2,...,p_{0}\right\}  \text{)}=c\left\vert
\frac{1}{\sqrt{t}}e_{j}\right\vert ^{2}=\frac{c}{t}.
\end{align*}
This shows that%
\begin{align*}
\left\langle C^{-1}\left(  t\right)  e_{j},e_{j}\right\rangle  &  \leq\frac
{c}{t}\left(  1+O\left(  t\right)  \right)  \text{, and}\\
\left\vert \left\langle C^{-1}\left(  t\right)  e_{j},e_{i}\right\rangle
\right\vert  &  \leq\left\langle C^{-1}\left(  t\right)  e_{j},e_{j}%
\right\rangle ^{1/2}\left\langle C^{-1}\left(  t\right)  e_{i},e_{i}%
\right\rangle ^{1/2}\leq\frac{c}{t}\left(  1+O\left(  t\right)  \right)  ,
\end{align*}
for $t\rightarrow0$. Therefore%
\begin{align*}
&  \left\vert \partial_{x_{i}x_{j}}^{2}\gamma\left(  x,t\right)  \right\vert
\leq\frac{1}{2}\gamma\left(  x,t\right)  \left\{  \frac{c}{t}\left\langle
C^{-1}\left(  t\right)  x,x\right\rangle +\frac{c}{t}\right\}  \left(
1+O\left(  t\right)  \right)  =\\
&  =\frac{c}{\sqrt{\det C\left(  t\right)  }}\exp\left(  -\frac{1}%
{4}\left\langle C^{-1}\left(  t\right)  x,x\right\rangle -t\text{Tr}B\right)
\left\{  \frac{1}{t}\left\langle C^{-1}\left(  t\right)  x,x\right\rangle
+\frac{1}{t}\right\}  \left(  1+O\left(  t\right)  \right) \\
&  \leq\frac{c}{t\sqrt{\det C\left(  t\right)  }}\exp\left(  -\frac{\left(
1-\delta\right)  }{4}\left\langle C^{-1}\left(  t\right)  x,x\right\rangle
-t\text{Tr}B\right)
\end{align*}
for some $\delta>0,$ any $t\in\left[  -1,1\right]  $, since%
\[
\exp\left(  -\frac{1}{4}\left\langle C^{-1}\left(  t\right)  x,x\right\rangle
\right)  \left\{  \left\langle C^{-1}\left(  t\right)  x,x\right\rangle
+1\right\}  \leq c\exp\left(  -\frac{\left(  1-\delta\right)  }{4}\left\langle
C^{-1}\left(  t\right)  x,x\right\rangle \right)  ,
\]
which follows from%
\begin{align*}
\left(  4\alpha+1\right)  \exp\left(  -\alpha\right)   &  \leq c\exp\left(
-\left(  1-\delta\right)  \alpha\right)  \text{ \ \ \ }\forall\alpha\geq0\\
\text{and }\alpha &  =\frac{1}{4}\left\langle C^{-1}\left(  t\right)
x,x\right\rangle \geq0.
\end{align*}
Let us rewrite the last inequality as%
\begin{align}
\left\vert \partial_{x_{i}x_{j}}^{2}\gamma\left(  x,t\right)  \right\vert  &
\leq\frac{c}{t}\gamma_{\delta}\left(  x,t\right)  ,\label{bound D2 gamma}\\
\text{with }\gamma_{\delta}\left(  x,t\right)   &  =\frac{\left(  4\pi\right)
^{-N/2}}{\sqrt{\det C\left(  t\right)  }}\exp\left(  -\frac{\left(
1-\delta\right)  }{4}\left\langle C^{-1}\left(  t\right)  x,x\right\rangle
-t\text{Tr}B\right)  .\nonumber
\end{align}

With this bound in hand, we can now evaluate the following integral, for $z\in
S$. Using Lemma \ref{Lemma Jacobian} we have:%
\begin{align*}
&  \int_{S}\left\vert \left(  \left(  1-\eta\right)  \partial_{x_{i}x_{j}}%
^{2}\gamma\right)  \left(  \zeta^{-1}\circ z\right)  \right\vert d\zeta\leq\\
&  \leq c\int_{\mathbb{R}^{N}\times\left(  -2,2\right)  ,\left\Vert
\zeta^{\prime}\right\Vert \geq1/2}\left\vert \left(  \left(  1-\eta\right)
\partial_{x_{i}x_{j}}^{2}\gamma\right)  \left(  \zeta^{\prime}\right)
\right\vert d\zeta^{\prime}\\
&  =c\int_{\mathbb{R}^{N}\times\left(  -2,2\right)  ,\left\Vert \zeta^{\prime
}\right\Vert \geq1/2,\left\Vert \left(  x,0\right)  \right\Vert \leq
1/4}\left\vert \left(  \left(  1-\eta\right)  \partial_{x_{i}x_{j}}^{2}%
\gamma\right)  \left(  x,t\right)  \right\vert dxdt+\\
&  +c\int_{\mathbb{R}^{N}\times\left(  -2,2\right)  ,\left\Vert \zeta^{\prime
}\right\Vert \geq1/2,\left\Vert \left(  x,0\right)  \right\Vert >1/4}%
\left\vert \left(  \left(  1-\eta\right)  \partial_{x_{i}x_{j}}^{2}%
\gamma\right)  \left(  x,t\right)  \right\vert dxdt\\
&  \equiv I+II.
\end{align*}
Now,%
\begin{align*}
I  &  \leq c\int_{1/16\leq\left\vert t\right\vert \leq2,\left\Vert \left(
x,0\right)  \right\Vert \leq1/4}\frac{\left(  4\pi\right)  ^{-N/2}}%
{t\sqrt{\det C\left(  t\right)  }}\exp\left(  -\frac{\left(  1-\delta\right)
}{4}\left\langle C^{-1}\left(  t\right)  x,x\right\rangle -t\text{Tr}B\right)
dxdt\\
&  \leq c\int_{\left\vert x\right\vert \leq c_{1}}\exp\left(  -c_{2}\left\vert
x\right\vert ^{2}\right)  dx\leq c
\end{align*}
where we used (\ref{C-C_0 b}) and the fact that%
\[
\left\langle C_{0}^{-1}\left(  t\right)  x,x\right\rangle \geq c\left\vert
D\left(  \frac{1}{\sqrt{t}}\right)  x\right\vert ^{2}\geq c\left\vert
x\right\vert ^{2}\text{ since }\left\vert t\right\vert \leq2
\]
while, by (\ref{DetC-DetC_0}),%
\[
t\sqrt{\det C\left(  t\right)  }\geq c_{1}\sqrt{\det C_{0}\left(  t\right)
}=c_{2}t^{\left(  Q+2\right)  /2}\geq c_{3}\text{ since }\left\vert
t\right\vert \geq1/16.
\]
To handle $II$, we start noting that, if $\left\Vert \left(  x,0\right)
\right\Vert >1/4,$ by (\ref{C-C_0 b}) we can write%
\begin{align*}
&  \exp\left(  -c_{1}\left\langle C^{-1}\left(  t\right)  x,x\right\rangle
\right)  \leq\exp\left(  -c_{2}\left\langle C_{0}^{-1}\left(  t\right)
x,x\right\rangle \right)  \leq\\
&  \leq\exp\left(  -c_{3}\left\vert D\left(  \frac{1}{\sqrt{t}}\right)
x\right\vert ^{2}\right)  \leq\exp\left(  -c_{4}\frac{\left\vert x\right\vert
^{2}}{t}\right)  \leq\\
&  \leq\exp\left(  -\frac{c_{5}}{t}\right)  \leq c_{6}t
\end{align*}
hence%
\begin{align*}
II  &  \leq\int_{\mathbb{R}^{N}\times\left(  -2,2\right)  ,\left\Vert
\zeta^{\prime}\right\Vert \geq1/2,\left\Vert \left(  x,0\right)  \right\Vert
>1/4}\frac{c}{\sqrt{\det C\left(  t\right)  }}\exp\left(  -c_{7}\left\langle
C^{-1}\left(  t\right)  x,x\right\rangle \right)  dxdt=\\
&  \text{(letting }C^{-1/2}\left(  t\right)  x=y\text{)}\\
&  =\int_{\mathbb{R}^{N}\times\left(  -2,2\right)  }c\exp\left(
-c_{7}\left\vert y\right\vert ^{2}\right)  dydt=c.
\end{align*}

\end{proof}

By Corollary \ref{Coroll nonsingular} and (\ref{repr formula}), our final goal
will be achieved as soon as we will prove that%
\begin{equation}
\left\Vert PV\left(  k_{0}\ast Lu\right)  \right\Vert _{L^{p}\left(  S\right)
}\leq c\left\Vert Lu\right\Vert _{L^{p}\left(  S\right)  }
\label{bound gamma_0}%
\end{equation}
for any $u\in C_{0}^{\infty}\left(  S\right)  ,$ $i,j=1,...,p_{0},1<p<\infty.$
The proof of (\ref{bound gamma_0}) will be carried out in the following
sections, and concluded with Theorem \ref{Thm conclusion}.

\section{Estimates on the singular kernel}

To prove the singular integral estimate (\ref{bound gamma_0}), we have to
introduce some more structure in our setting. Let:%
\[
d\left(  z,\zeta\right)  =\left\Vert \zeta^{-1}\circ z\right\Vert .
\]
Recall that $\circ$ is the translation induced by the the operator $L$ (or
more precisely by the matrix $B$), and $\left\Vert \cdot\right\Vert $ the
homogeneous norm induced by the dilations associated to the principal part
operator $L_{0}$ (see \S \ref{section background}). This object has been
introduced and used in \cite{DP}, and turns out to be the right geometric tool
to describe the properties of the singular kernel $\gamma_{0}$. Namely, the
following key properties have been proved in \cite{DP}:

\begin{proposition}
\label{Prop quasidist}(See Lemma 2.1 in \cite{DP}). For any compact set
$K\subset\mathbb{R}^{N}$ there exists a constant $c_{K}\geq1$ such that
\begin{align*}
\left\Vert z^{-1}\right\Vert  &  \leq c_{K}\left\Vert z\right\Vert \text{ for
every }z\in K\times\left[  -1,1\right] \\
\left\Vert z\circ\zeta\right\Vert  &  \leq c_{K}\left\{  \left\Vert
z\right\Vert +\left\Vert \zeta\right\Vert \right\}  \text{ for every }\zeta\in
S,z\in K\times\left[  -1,1\right]  .
\end{align*}
In terms of $d,$ the above inequalities imply the following:%
\begin{align*}
d\left(  z,\zeta\right)   &  \leq cd\left(  \zeta,z\right)  \text{ }\forall
z,\zeta\in S\text{ with }d\left(  \zeta,z\right)  \leq1\\
d\left(  z,\zeta\right)   &  \leq c\left\{  d\left(  z,w\right)  +d\left(
w,\zeta\right)  \right\}  \text{ }\forall z,\zeta,w\in S\text{ with }d\left(
z,w\right)  \leq1,d\left(  w,\zeta\right)  \leq1.
\end{align*}

\end{proposition}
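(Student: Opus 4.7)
The plan is to derive both inequalities from the Taylor expansion of $E(\tau) = \exp(-\tau B^{T})$ combined with a careful use of the block-triangular structure of $B$. Recall that $\|(x,t)\| = \sum_{j=1}^{N} |x_{j}|^{1/q_{j}} + |t|^{1/2}$, where $q_{j} = 2j_{b}+1$ if the $j$-th coordinate lies in the block labelled $j_{b} \in \{0,1,\dots,r\}$. The central combinatorial observation is: each non-zero entry of $B^{T}$ connects coordinate $k$ to coordinate $i$ only when $i_{b} \leq k_{b}+1$. Indeed, $B$ is super-diagonal (in the block sense) plus on-or-below-diagonal $\ast$ entries; transposing gives that $B^{T}$ is sub-diagonal plus on-or-above-diagonal, so each application of $B^{T}$ raises the block-index by at most $1$. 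Iterating, any non-zero chain in $(B^{T})^{n}$ from $k$ to $i$ satisfies $i_{b} \leq k_{b}+n$, equivalently $q_{i} \leq q_{k}+2n$.

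For the first inequality $\|z^{-1}\| \leq c_{K}\|z\|$, I would treat separately the regimes $\|z\| \leq 1$ and $\|z\| > 1$. On the set $\{z \in K \times [-1,1] : \|z\| > 1\}$, which is compact and bounded away from $0$, the ratio $\|z^{-1}\| / \|z\|$ is a continuous function, hence bounded by a constant depending on $K$. For $\|z\| \leq 1$, writing $z = (x,t)$ gives $|x_{k}| \leq \|z\|^{q_{k}}$ and $|t| \leq \|z\|^{2}$, and the Taylor expansion
\[
(E(-t)x)_{j} = x_{j} + \sum_{n \geq 1} \frac{(-t)^{n}}{n!}\bigl((B^{T})^{n} x\bigr)_{j}
\]
allows a term-by-term bound $|t|^{n}|x_{k}| \leq \|z\|^{2n+q_{k}} \leq \|z\|^{q_{j}}$ along every non-zero path, by the combinatorial bound $q_{j} \leq q_{k}+2n$. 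Absorbing the convergent factorial series then gives $|(E(-t)x)_{j}| \leq c\|z\|^{q_{j}}$, hence $|(E(-t)x)_{j}|^{1/q_{j}} \leq c\|z\|$, and summing over $j$ and adding $|t|^{1/2} \leq \|z\|$ yields the claim.

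For the second inequality $\|z \circ \zeta\| \leq c_{K}(\|z\| + \|\zeta\|)$ with $z \in K \times [-1,1]$ and $\zeta=(\xi,\tau) \in S$, I would start from
\[
\|z \circ \zeta\| \leq \sum_{j}\bigl(|\xi_{j}|^{1/q_{j}} + |(E(\tau)x)_{j}|^{1/q_{j}}\bigr) + |t|^{1/2} + |\tau|^{1/2},
\]
obtained from the sub-additivity $|a+b|^{1/q_{j}} \leq |a|^{1/q_{j}} + |b|^{1/q_{j}}$. The only delicate term is $\sum_{j}|(E(\tau)x)_{j}|^{1/q_{j}}$. In the regime $\|z\| + \|\zeta\| \leq 1$, the same Taylor argument applies with $|\tau| \leq \|\zeta\|^{2}$ and $|x_{k}| \leq \|z\|^{q_{k}}$, giving $|\tau|^{n}|x_{k}| \leq \|\zeta\|^{2n}\|z\|^{q_{k}} \leq (\|z\|+\|\zeta\|)^{2n+q_{k}} \leq (\|z\|+\|\zeta\|)^{q_{j}}$ along every non-zero path, whence $|(E(\tau)x)_{j}|^{1/q_{j}} \leq c(\|z\|+\|\zeta\|)$. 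In the complementary regime $\|z\| + \|\zeta\| \geq 1$, the boundedness of $x \in K$ and $|\tau| \leq 1$ forces $|(E(\tau)x)_{j}|$ to be controlled by a constant depending on $K$, which is then absorbed into $c_{K}(\|z\|+\|\zeta\|)$.

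The main technical obstacle is the combinatorial weight bound $q_{i} \leq q_{k}+2n$ for non-zero paths in $(B^{T})^{n}$, which has to be extracted from the block structure of $B$ together with the weights defining $\|\cdot\|$; once this observation is in place, both inequalities reduce to the dichotomy between a small-ball regime, where the Taylor expansion of $E$ provides a quantitatively sharp bound, and a large-scale regime, where the boundedness of $x \in K$ and $|t|,|\tau| \leq 1$ trivializes the estimate.
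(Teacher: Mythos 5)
The paper does not actually prove this proposition: it simply cites Lemma~2.1 of Di Francesco--Polidoro \cite{DP}. So there is no in-paper argument to compare against; the only question is whether your self-contained argument is correct, and it essentially is.

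The combinatorial core of your argument --- that a nonzero entry of $(B^{T})^{n}$ can carry coordinate $k$ to coordinate $i$ only if the block index rises by at most $n$, hence $q_{i}\le q_{k}+2n$, because $B$ vanishes strictly above the super-diagonal block band $B_{1},\dots,B_{r}$ --- is exactly the structural fact that makes the homogeneous norm behave well under $\circ$ and under inversion, and you exploit it correctly through the Taylor expansion of $E(\tau)=\exp(-\tau B^{T})$ together with the small-norm/large-norm dichotomy. Three small remarks. First, the set $\{z\in K\times[-1,1]:\|z\|>1\}$ is not compact (it is not closed); this is harmless, since you can instead bound the continuous function $z\mapsto\|z^{-1}\|$ on all of $K\times[-1,1]$ and then divide by $\|z\|>1$. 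Second, the sub-additivity $|a+b|^{1/q_{j}}\le|a|^{1/q_{j}}+|b|^{1/q_{j}}$ holds precisely because each $q_{j}\ge1$; worth saying explicitly. Third, you prove only the two norm inequalities and omit the passage to the statements about $d$. This is a genuinely routine deduction --- writing $\zeta^{-1}\circ z=(\zeta^{-1}\circ w)\circ(w^{-1}\circ z)$ and observing that $d(z,w)\le1$, $d(w,\zeta)\le1$ force the two factors into $[-1,1]^{N}\times[-1,1]$, so the norm inequalities apply with $K=[-1,1]^{N}$ --- but since the proposition states the $d$-inequalities as part of the claim, a complete proof should include a sentence to this effect.
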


Let us define the $d$-balls:%
\[
B\left(  z,\rho\right)  =\left\{  \zeta\in\mathbb{R}^{N+1}:d\left(
z,\zeta\right)  <\rho\right\}  .
\]

\begin{lemma}
\label{Lemma topologies}The $d$-balls are open with respect to the Euclidean
topology. Moreover, the topology induced by this family of balls (saying that
a set $\Omega$ is open whenever for any $x\in\Omega$ there exists $\rho>0$
such that $B\left(  x,\rho\right)  \subset\Omega$) coincides with the
Euclidean topology.
\end{lemma}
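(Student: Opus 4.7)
The plan is to reduce both parts of the lemma to the single observation that $d(z,\zeta)=\|\zeta^{-1}\circ z\|$ is jointly continuous in the Euclidean topology. I would first record two facts: (a) the group operation $\circ$ and the inversion on $(\mathbb{R}^{N+1},\circ)$ are analytic in Euclidean coordinates, since they are built from $E(\tau)=\exp(-\tau B^{T})$, which is entire in $\tau$, together with the bilinear action $x\mapsto E(\tau)x$; and (b) the homogeneous norm $\|(x,t)\|=\sum_{j=1}^{N}|x_j|^{1/q_j}+|t|^{1/2}$ is continuous on $\mathbb{R}^{N+1}$, vanishes only at the origin, and is coercive near $0$ in the sense that $\|w_n\|\to 0$ if and only if $w_n\to 0$ in the Euclidean sense (clear term by term, since each $1/q_j>0$). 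Composing (a) and (b), the map $(z,\zeta)\mapsto d(z,\zeta)$ is jointly Euclidean-continuous.

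For the openness of $d$-balls I would fix $z_0$ and $\rho>0$; because $\zeta\mapsto d(z_0,\zeta)$ is Euclidean-continuous, the set
\[
B(z_0,\rho)=\{\zeta\in\mathbb{R}^{N+1}:d(z_0,\zeta)<\rho\}
\]
is the preimage of $[0,\rho)$ under a continuous map, hence Euclidean-open. One inclusion of topologies then follows at once: any set open in the $d$-topology is by definition a union of $d$-balls, each of which is Euclidean-open.

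For the reverse inclusion, it suffices to show that every Euclidean ball around a point $z_0$ contains some $d$-ball around $z_0$. I plan to argue by contradiction: if not, there would exist $z_0$, $\epsilon>0$, and a sequence $\zeta_n$ with $d(z_0,\zeta_n)\to 0$ but the Euclidean distance from $\zeta_n$ to $z_0$ bounded below by $\epsilon$. From $\|\zeta_n^{-1}\circ z_0\|\to 0$ and the coercivity recorded in (b), $\zeta_n^{-1}\circ z_0\to 0$ in the Euclidean sense. Writing $\zeta_n=z_0\circ(\zeta_n^{-1}\circ z_0)^{-1}$ and invoking the Euclidean continuity of $\circ$ and of inversion from (a), one obtains $\zeta_n\to z_0$ in the Euclidean topology, contradicting the lower bound.

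The only conceivable obstacle is the failure of symmetry and of the global triangle inequality for $d$, which rules out appealing to standard metric or quasi-metric space machinery. The argument above sidesteps this defect: it relies only on the Euclidean continuity of $d$ and on the coercivity of $\|\cdot\|$ at the origin, not on any structural property of $d$ as a quasidistance. Consequently I expect this lemma to be essentially a routine topological check, once the two ingredients (a) and (b) above are made explicit.
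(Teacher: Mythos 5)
Your argument is correct and follows essentially the same route as the paper's: openness of $d$-balls from Euclidean continuity of $\zeta\mapsto d(z_0,\zeta)$, and the reverse inclusion from the fact that $\|w\|$ small forces $w$ small Euclideanly, combined with the smoothness of $\circ$ and inversion (the paper writes $|w^{-1}|\leq c\|w^{-1}\|\leq c\|w\|$ and appeals to smoothness of $\circ$; you package the same facts as coercivity of $\|\cdot\|$ near the origin and phrase the conclusion as a sequential contradiction, but the content is identical).
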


\begin{proof}
Since the function $\zeta\mapsto d\left(  z_{0},\zeta\right)  $ is continuous,
$B\left(  z_{0},\rho\right)  $ is open with respect to the Euclidean topology;
in particular, $B\left(  z_{0},\rho\right)  $ contains an Euclidean ball
centered at $z_{0}.$

Conversely, fix an Euclidean ball $B^{E}\left(  z_{0},\rho\right)  $ of center
$z_{0}$ and radius $\rho>0,$ and assume that $z$ is point such that%
\[
\left\Vert z^{-1}\circ z_{0}\right\Vert <\varepsilon,
\]
for some $\varepsilon>0$ to be chosen later. Then, letting $w=z^{-1}\circ
z_{0},$ we have%
\[
\left\vert z_{0}-z\right\vert =\left\vert z_{0}-z_{0}\circ w^{-1}\right\vert
<\rho\text{ for }\varepsilon\text{ small enough,}%
\]
because
\[
\left\vert w^{-1}\right\vert \leq c\left\Vert w^{-1}\right\Vert \leq
c\left\Vert w\right\Vert <c\varepsilon,
\]
and the translation $\circ$ is a smooth operation. Hence
\[
B^{E}\left(  z_{0},\rho\right)  \supseteq B\left(  z_{0},\varepsilon\right)
,
\]
so that the two topologies coincide.
\end{proof}

The relevant information about the measure of $d$-balls are contained in the following:

\begin{proposition}
\label{Prop measure balls}

\begin{enumerate}
\item[(i)] The following dimensional bound holds:%
\[
\left\vert B\left(  z,\rho\right)  \right\vert \leq c\rho^{Q+2}\text{ for any
}z\in S,0<\rho<1.
\]

\item[(ii)] The following doubling condition holds in $S$:%
\[
\left\vert B\left(  z,2\rho\right)  \cap S\right\vert \leq c\left\vert
B\left(  z,\rho\right)  \cap S\right\vert \text{ \ for any }z\in S,0<\rho<1.
\]

\end{enumerate}
\end{proposition}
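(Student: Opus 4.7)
The plan is to reduce both (i) and (ii) to elementary computations on the ``abstract'' norm ball $\{w \in \mathbb{R}^{N+1} : \|w\| < \rho\}$ via the change-of-variables formula of Lemma \ref{Lemma Jacobian}. The key preliminary fact is that the $\delta(\lambda)$-dilations satisfy $\{w : \|w\| < \rho\} = \delta(\rho)\{w : \|w\| < 1\}$ with $\det\delta(\rho) = \rho^{Q+2}$, so its Lebesgue measure is $c_0\rho^{Q+2}$, where the unit norm ball is bounded in $\mathbb{R}^{N+1}$ (each coordinate is $\leq 1$), hence has finite measure $c_0$.

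For (i), I would write
\[
|B(z,\rho)| = \int_{\{w = (\xi,\tau) : \|w\| < \rho\}} e^{\tau\,\text{Tr}B}\,dw
\]
via Lemma \ref{Lemma Jacobian}, and observe that for $0 < \rho < 1$ we have $|\tau| < \rho^2 < 1$, so the Jacobian is uniformly bounded by $e^{|\text{Tr}B|}$. This immediately yields $|B(z,\rho)| \leq c\rho^{Q+2}$, and the same reasoning (Jacobian bounded below) provides a matching lower bound $|B(z,\rho)| \geq c'\rho^{Q+2}$ on the full $d$-ball, which will be useful for (ii).

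The upper bound in (ii), $|B(z,2\rho)\cap S| \leq |B(z,2\rho)| \leq c\rho^{Q+2}$, follows at once from the same argument (for $0 < \rho < 1$ we still have $|\tau| < 4\rho^2 < 4$, so the Jacobian stays bounded). The main obstacle is the lower bound $|B(z,\rho) \cap S| \geq c\rho^{Q+2}$, because near the ``caps'' of the strip the intersection with $S$ could in principle discard most of the ball. Writing $z = (x_0,t_0)$ and $w = (\xi,\tau)$, one computes that the time coordinate of $\zeta = z \circ w^{-1}$ is $t_0 - \tau$, so $\zeta \in S$ amounts to $\tau \in [t_0 - 1,\, t_0 + 1]$. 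Since $|t_0| \leq 1$ and $|\tau| < \rho^2 \leq 1$, this admissible interval always contains at least one of the half-ranges $[0, \rho^2]$ or $[-\rho^2, 0]$, so in the worst case ($t_0 = \pm 1$) we retain exactly half of the admissible time range.

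By Fubini and the $D(\lambda)$-scaling of the spatial part of $\|\cdot\|$, the measure of the slice $\{\xi : \|(\xi, \tau)\| < \rho\}$ is $c_0'(\rho - |\tau|^{1/2})^Q$. Integrating over the retained time half,
\[
|B(z,\rho) \cap S| \geq e^{-|\text{Tr}B|}\,c_0' \int_0^{\rho^2} (\rho - \tau^{1/2})^Q \, d\tau,
\]
and the substitution $u = \tau^{1/2}/\rho$ shows this integral is of order $\rho^{Q+2}$. Combining this lower bound on $|B(z,\rho) \cap S|$ with the upper bound on $|B(z,2\rho) \cap S|$ gives the doubling condition, with a constant depending only on $Q$ and $\text{Tr}B$.
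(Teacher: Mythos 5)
Your proof is correct and follows essentially the same route as the paper: change variables via Lemma \ref{Lemma Jacobian}, bound the Jacobian $e^{\tau\,\mathrm{Tr}B}$ above and below using $|\tau|<\rho^{2}<1$, and combine the upper bound $|B(z,2\rho)\cap S|\le|B(z,2\rho)|\le c\rho^{Q+2}$ with the lower bound $|B(z,\rho)\cap S|\ge c\rho^{Q+2}$ obtained by retaining the half of the norm ball with $\tau\ge 0$ (or $\tau\le 0$). The only cosmetic difference is that you compute the lower bound by explicit Fubini integration of the spatial slice measure $(\rho-|\tau|^{1/2})^{Q}$, whereas the paper simply observes that $\{\|w\|<\rho,\ \tau\ge 0\}$ is exactly half the norm ball by the $\tau\mapsto-\tau$ symmetry of $\|\cdot\|$ and then dilates; both give the same constant up to a factor.
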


\begin{proof}
Let us compute the integral%
\[
\left\vert B\left(  z,\rho\right)  \right\vert =\int_{\left\Vert \zeta
^{-1}\circ z\right\Vert <\rho}d\zeta.
\]
Setting $\zeta^{-1}\circ z=w$ and applying Lemma \ref{Lemma Jacobian} we have,
if $z=\left(  x,t\right)  ,w=\left(  \xi,\tau\right)  $:
\[
\left\vert B\left(  z,\rho\right)  \right\vert =\int_{\left\Vert \left(
\xi,\tau\right)  \right\Vert <\rho}e^{\tau\text{Tr}B}d\xi d\tau
\]
Since $z\in S,$ in particular, $\left\vert t\right\vert \leq1,$ $\left\vert
t-\tau\right\vert \leq\rho^{2},$ hence $\left\vert \tau\right\vert \leq2$ and
the last integral is%
\[
\leq e^{2\text{Tr}B}\int_{\left\Vert w\right\Vert <\rho}dw
\]
by the dilation $w=\delta\left(  \rho\right)  w^{\prime}$%
\[
=e^{2\text{Tr}B}\rho^{Q+2}\int_{\left\Vert \left(  \xi,\tau\right)
\right\Vert <1}d\xi d\tau=c\rho^{Q+2}%
\]
which proves (i).

To prove (ii), let $\zeta=\left(  x^{\prime},t^{\prime}\right)  ,z=\left(
x,t\right)  ,w=\left(  \xi,\tau\right)  ,$ and assume, to fix ideas, $t\geq0.$
Then%
\begin{align*}
\left\vert B\left(  z,\rho\right)  \cap S\right\vert  &  =\int_{\left\Vert
\left(  \xi,\tau\right)  \right\Vert <\rho,\left\vert t-\tau\right\vert
<1}e^{\tau\text{Tr}B}d\xi d\tau\\
&  \geq c\int_{\left\Vert \left(  \xi,\tau\right)  \right\Vert <\rho,0\leq
\tau\leq1}d\xi d\tau\text{ (since }\rho<1\text{)}\\
&  =\frac{c}{2}\int_{\left\Vert w\right\Vert <\rho}dw=c\rho^{Q+2}%
\int_{\left\Vert w^{\prime}\right\Vert <1}dw^{\prime}=c\rho^{Q+2}\\
&  \geq c\left\vert B\left(  z,2\rho\right)  \right\vert \geq c\left\vert
B\left(  z,2\rho\right)  \cap S\right\vert
\end{align*}
by (i).
\end{proof}

We also need the following bounds of the fundamental solution $\Gamma$ in
terms of $d$:

\begin{proposition}
(See Proposition 2.7 in \cite{DP}) The following \textquotedblleft standard
estimates\textquotedblright\ hold for $\Gamma$ in terms of $d$: there exist
$c>0$ and $M>1$ such that%
\begin{align*}
\left\vert \partial_{x_{i}x_{j}}^{2}\Gamma\left(  z,\zeta\right)  \right\vert
&  \leq\frac{c}{d\left(  z,\zeta\right)  ^{Q+2}}\text{ \ \ }\forall z,\zeta\in
S\\
\left\vert \partial_{x_{i}x_{j}}^{2}\Gamma\left(  \zeta,w\right)
-\partial_{x_{i}x_{j}}^{2}\Gamma\left(  z,w\right)  \right\vert  &  \leq
c\frac{d\left(  w,z\right)  }{d\left(  w,\zeta\right)  ^{Q+3}}\text{
\ \ }\forall z,\zeta,w\in S\text{ }%
\end{align*}
with $Md\left(  w,z\right)  \leq d\left(  w,\zeta\right)  \leq1.$
\end{proposition}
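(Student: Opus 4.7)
The approach for both inequalities is to transfer scaling information from the homogeneous principal-part kernel $\gamma_0$ to $\gamma$ by means of the asymptotic comparisons (\ref{C-C_0 a})--(\ref{DetC-DetC_0}), which are valid on the bounded strip $S$. The size bound reduces to a pointwise decay estimate on $\partial^2_{x_ix_j}\gamma$, while the H\"older-type bound reduces to an analogous decay estimate on one additional horizontal derivative, combined with a mean-value argument along the $\circ$-group.

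For the size bound, I would start from the derivative computation already carried out in the proof of Proposition \ref{Prop k_infty}, which yields $|\partial^2_{x_ix_j}\gamma(x,t)|\le (c/t)\,\gamma_\delta(x,t)$. Since $\gamma_0$ is $\delta(\lambda)$-homogeneous of degree $-Q$ and the horizontal coordinates $x_i$ have weight $1$ for $i\le p_0$, the function $\partial^2_{x_ix_j}\gamma_0$ is $\delta(\lambda)$-homogeneous of degree $-(Q+2)$; the scaling identity applied at $\lambda=1/\|z\|$, together with boundedness of $\partial^2_{x_ix_j}\gamma_0$ on the compact set $\{\|z\|=1\}$, gives $|\partial^2_{x_ix_j}\gamma_0(z)|\le c\|z\|^{-(Q+2)}$. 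The inequalities (\ref{C-C_0 a})--(\ref{DetC-DetC_0}), together with the lower bound $\langle C_0^{-1}(t)x,x\rangle\ge c\,|D(1/\sqrt{t})x|^2$ already used in Section~3, transfer this estimate to $\gamma$ on $S\setminus\{0\}$; recalling that $\partial^2_{x_ix_j}\Gamma(z,\zeta)=(\partial^2_{x_ix_j}\gamma)(\zeta^{-1}\circ z)$ up to bounded chain-rule factors coming from $E(\cdot)$ on the compact time interval, the first inequality follows. The same scheme, applied with one more horizontal derivative $X_k=\partial_{x_k}$ ($k\le p_0$), produces the auxiliary decay $|X_k\partial^2_{x_ix_j}\gamma(z)|\le c\|z\|^{-(Q+3)}$ on $S\setminus\{0\}$.

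With this third-derivative bound in hand, the H\"older estimate comes from a mean-value argument along a short horizontal path joining the two arguments $w^{-1}\circ z$ and $w^{-1}\circ\zeta$; using left $\circ$-invariance of $X_k$, one has $X_k^{z}[\partial^2_{x_ix_j}\Gamma(z,w)]=(X_k\partial^2_{x_ix_j}\gamma)(w^{-1}\circ z)$. Under the hypothesis relating $d(w,z)$, $d(w,\zeta)$ and $1$, Proposition \ref{Prop quasidist} ensures that every point on such a curve stays uniformly at $\|\cdot\|$-distance comparable to $d(w,\zeta)$ from the origin, so integrating the third-derivative bound along the curve yields the claimed estimate. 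The main obstacle is precisely this mean-value step: the group $(\mathbb{R}^{N+1},\circ)$ is not nilpotent, so horizontal connectivity by curves of length comparable to $\|\cdot\|$ is not automatic as it is in a Carnot group. I would circumvent this by first proving the analogous third-derivative estimate for $\gamma_0$ on the homogeneous principal-part group $(\mathbb{R}^{N+1},\odot)$, where Folland's theory \cite{Fo} supplies the needed sub-Riemannian geometry, and then pulling the result back to the $\circ$-setting through the perturbative comparisons (\ref{C-C_0 a})--(\ref{DetC-DetC_0}), which on the bounded window $|t|\le 1$ differ from equalities only by multiplicative $O(t)$ factors that can be absorbed into the constants.
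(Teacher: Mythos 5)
The paper does not prove this proposition: it is quoted verbatim as Proposition 2.7 of \cite{DP}, and the authors rely on that reference rather than supplying an argument. There is therefore no in-paper proof to compare against, and I can only assess your proposal on its own terms.

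The scheme you give for the size bound is sound. Homogeneity of $\partial^2_{x_ix_j}\gamma_0$ of degree $-(Q+2)$ plus the comparisons (\ref{C-C_0 a})--(\ref{DetC-DetC_0}) is exactly the right mechanism, though you do need to turn the intermediate bound $(c/t)\gamma_\delta$ into $c\,\|(x,t)\|^{-(Q+2)}$ by trading Gaussian decay in $x$ against powers of $|x|/\sqrt t$; that step, while routine, should not be left implicit.

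The H\"older half contains a genuine gap. Your proposed workaround -- prove the third-derivative/H\"older estimate for $\gamma_0$ on $(\mathbb{R}^{N+1},\odot)$ via Folland, then ``pull back'' to $\gamma$ and $\circ$ by absorbing the $O(t)$ factors into constants -- does not close. The relations (\ref{C-C_0 a})--(\ref{DetC-DetC_0}) are \emph{pointwise} multiplicative comparisons, and the $O(t)$ terms are non-constant functions of $(x,t)$; a pointwise ratio bound for $\partial^2\gamma/\partial^2\gamma_0$ does not convert a H\"older estimate for $\partial^2\gamma_0$ into one for $\partial^2\gamma$. Writing $\partial^2\gamma(u)-\partial^2\gamma(v)$ as $[\partial^2\gamma_0(u)-\partial^2\gamma_0(v)]+[\text{error}(u)-\text{error}(v)]$ shows you would need a H\"older estimate on the error term, which the cited comparisons do not give. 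A further complication is that the quasidistance appearing in the statement is built from $\circ$, not $\odot$, and a H\"older bound expressed in the $\odot$-metric does not automatically transfer.

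The deeper issue is that the obstacle you flag -- non-nilpotency of $(\mathbb{R}^{N+1},\circ)$ and absence of horizontal connectivity -- is real but is not usually overcome by passing to $\gamma_0$. The standard route (and the one followed in the Polidoro school, including \cite{DP}) is to estimate \emph{every} first derivative $\partial_{x_k}\partial^2_{x_ix_j}\gamma$ and $\partial_t\partial^2_{x_ix_j}\gamma$ with the weighted decay $c\,\|z\|^{-(Q+2+q_k)}$ (respectively $c\,\|z\|^{-(Q+4)}$), directly from the explicit Gaussian form together with (\ref{C_0 dilat}) and (\ref{C-C_0 a})--(\ref{DetC-DetC_0}), and then bound the increment by decomposing the displacement $v^{-1}\circ u$ into coordinate pieces, each controlled by the corresponding derivative bound times the corresponding weighted power of $\|v^{-1}\circ u\|$. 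No sub-Riemannian horizontal path, and hence no Carnot-group or Folland geometry, is required: the Gaussian form makes all directions accessible, and the quasi-triangle inequality of Proposition \ref{Prop quasidist} together with the hypothesis $Md(w,z)\le d(w,\zeta)\le 1$ is enough to keep the intermediate points in the regime where the weighted derivative bounds apply.
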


An easy computation shows that the previous estimates extend to the kernel
$k_{0}=\eta\partial_{x_{i}x_{j}}^{2}\gamma$:

\begin{proposition}
\label{Prop kernel}There exists $c>0$ and $M>1$ such that%
\begin{align*}
\left\vert k_{0}\left(  \zeta^{-1}\circ z\right)  \right\vert  &  \leq\frac
{c}{d\left(  z,\zeta\right)  ^{Q+2}}\text{ \ \ }\forall z,\zeta\in S\\
\left\vert k_{0}\left(  w^{-1}\circ\zeta\right)  -k_{0}\left(  w^{-1}\circ
z\right)  \right\vert  &  \leq c\frac{d\left(  w,z\right)  }{d\left(
w,\zeta\right)  ^{Q+3}}\text{ \ \ }\forall z,\zeta,w\in S\text{ }%
\end{align*}
with $Md\left(  w,z\right)  \leq d\left(  w,\zeta\right)  \leq1.$
\end{proposition}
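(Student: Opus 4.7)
The first bound is immediate: writing $k_0(\zeta^{-1}\circ z)=\eta(\zeta^{-1}\circ z)\,(\partial^2_{x_ix_j}\gamma)(\zeta^{-1}\circ z)=\eta(\zeta^{-1}\circ z)\,\partial^2_{x_ix_j}\Gamma(z,\zeta)$, I use $0\leq\eta\leq 1$ together with the pointwise estimate $|\partial^2_{x_ix_j}\Gamma(z,\zeta)|\leq c/d(z,\zeta)^{Q+2}$ from the previous proposition.

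For the Hölder-type inequality, I set $u=w^{-1}\circ\zeta$ and $v=w^{-1}\circ z$ and apply the Leibniz identity
\begin{equation*}
k_0(u)-k_0(v)=\eta(u)\bigl[(\partial^2_{x_ix_j}\gamma)(u)-(\partial^2_{x_ix_j}\gamma)(v)\bigr]+(\partial^2_{x_ix_j}\gamma)(v)\bigl[\eta(u)-\eta(v)\bigr].
\end{equation*}
The first summand is immediately bounded by $c\,d(w,z)/d(w,\zeta)^{Q+3}$ by combining $|\eta|\leq 1$ with the Hölder estimate of the previous proposition. Only the second summand requires work.

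For the second summand, the key observation is that $\eta(u)-\eta(v)$ vanishes unless at least one of $u,v$ lies in the annulus $A=\{\rho_0/2\leq\|\cdot\|\leq\rho_0\}$ where $\nabla\eta$ is supported. Choosing $M$ large enough in terms of the quasi-symmetry constants of Proposition \ref{Prop quasidist}, the hypothesis $Md(w,z)\leq d(w,\zeta)$ forces $\|v\|$ to be a small multiple of $\|u\|$, and a short case analysis shows that in every non-trivial situation $\|u\|$ (hence $d(w,\zeta)$) is comparable to $\rho_0$ from below, while $\|v\|\leq c\rho_0$. Since both points then lie in a region of bounded homogeneous norm, Euclidean and homogeneous norms are comparable there, so the Euclidean smoothness of $\eta$ yields $|\eta(u)-\eta(v)|\leq c\|\nabla\eta\|_\infty|u-v|_E$. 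A direct computation of $u-v$ from the group law, combined with the pointwise bound $|\partial^2_{x_ix_j}\gamma(v)|\leq c/\|v\|^{Q+2}$ and the lower bound $d(w,\zeta)\gtrsim\rho_0$, delivers the desired estimate.

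The step I expect to be the main obstacle is precisely the bookkeeping in this second summand: the Euclidean Lipschitz control of $\eta$ must be converted, through the group law and the comparability of the two metrics on bounded regions, into something of the form $d(w,z)/d(w,\zeta)^{Q+3}$. This forces one to choose $M$ large and $\rho_0$ small in a coordinated way so that all the case distinctions close up and the factors of $\rho_0$ cancel correctly; once this coordination is in place, the remaining computation is routine.
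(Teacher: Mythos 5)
Your first bound is correct and immediate, and the Leibniz decomposition
\begin{equation*}
k_0(u)-k_0(v)=\eta(u)\bigl[(\partial^2_{x_ix_j}\gamma)(u)-(\partial^2_{x_ix_j}\gamma)(v)\bigr]+(\partial^2_{x_ix_j}\gamma)(v)\bigl[\eta(u)-\eta(v)\bigr]
\end{equation*}
with $u=w^{-1}\circ\zeta$, $v=w^{-1}\circ z$ is the right starting point; the term $I$ is handled cleanly by DP's estimate. The useful identity you almost surely have in mind, $u^{-1}\circ v=\zeta^{-1}\circ z$, is what lets you control $|u-v|_E$ by $d(z,\zeta)$ (not by $d(w,z)$), using that on the compact set $\{\|\cdot\|\leq 1\}$ the map $(u,h)\mapsto u\circ h-u$ is smooth and vanishes at $h=0$.

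The gap is in your treatment of the second term. Your case analysis correctly shows that when $\eta(u)-\eta(v)\neq 0$ one must have $\|u\|=d(\zeta,w)\gtrsim\rho_0$, hence $d(w,\zeta)\sim\rho_0$; but under the hypothesis as literally printed, $Md(w,z)\leq d(w,\zeta)$ gives \emph{no lower bound} on $\|v\|\sim d(w,z)$, which can be arbitrarily small. Then $|\partial^2_{x_ix_j}\gamma(v)|\sim\|v\|^{-(Q+2)}\to\infty$ while the right-hand side $d(w,z)/d(w,\zeta)^{Q+3}\sim\|v\|/\rho_0^{Q+3}\to 0$, and neither $|\eta(u)-\eta(v)|\leq 1$ nor the Euclidean Lipschitz bound on $\eta$ can repair this: the "direct computation" you invoke does not close. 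Concretely, if $z$ is close to $w$ and $\zeta$ is at distance $\sim\rho_0$ from $w$, then $k_0(w^{-1}\circ\zeta)=0$ while $k_0(w^{-1}\circ z)=\partial^2_{x_ix_j}\gamma(w^{-1}\circ z)$ is large, so the inequality with numerator $d(w,z)$ fails. The numerator that is actually needed (and the one actually used when this proposition is applied inside the proof of Proposition \ref{Prop check assumptions}, where the bound quoted is $d(x,x_0)/d(x_0,y)^{Q+3}$) is $d(z,\zeta)$, with side condition $Md(z,\zeta)\leq d(w,\zeta)\leq 1$. Under that reading the side condition forces $\|u\|\sim\|v\|\sim d(w,\zeta)$, so that in the non-trivial cases both lie in an annulus $\sim\rho_0$, the singular factor $|\partial^2_{x_ix_j}\gamma(v)|$ is bounded by a constant, and $|\eta(u)-\eta(v)|\lesssim|u-v|_E\lesssim\|u^{-1}\circ v\|=d(z,\zeta)$ gives exactly the right estimate. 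You should either prove the corrected statement, or flag explicitly that the printed numerator cannot be $d(w,z)$.
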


\begin{remark}
\label{Remark M}We can always assume that $M$ is large enough, so that the
conditions%
\[
Md\left(  w,z\right)  \leq d\left(  w,\zeta\right)  \leq1
\]
imply%
\[
c_{1}d\left(  z,\zeta\right)  \leq d\left(  w,\zeta\right)  \leq c_{2}d\left(
z,\zeta\right)
\]
for some absolute constants $c_{1},c_{2}>0.$
\end{remark}

We will also need the following:

\begin{lemma}
\label{Lemma cancellation}There exists $c>0$ such that%
\[
\left\vert \int_{r_{1}<\left\Vert \zeta^{-1}\circ z\right\Vert <r_{2}}%
k_{0}\left(  \zeta^{-1}\circ z\right)  d\zeta\right\vert \leq c\text{ }%
\]
for any $z\in S,0<r_{1}<r_{2}.$ Moreover, for every $z\in S$, the limit
\[
\lim_{\varepsilon\rightarrow0^{+}}\int_{\left\Vert \zeta^{-1}\circ
z\right\Vert >\varepsilon}k_{0}\left(  \zeta^{-1}\circ z\right)  d\zeta
\]
exists, is finite, and independent of $z$.
\end{lemma}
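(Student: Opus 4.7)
The plan is to change variables to eliminate $z$, then split $\gamma$ into its principal part $\gamma_0$ plus an integrable perturbation. First I would set $w = \zeta^{-1}\circ z = (\xi,\tau)$ and apply Lemma \ref{Lemma Jacobian} to get
\[
\int_{r_1<\|\zeta^{-1}\circ z\|<r_2} k_0(\zeta^{-1}\circ z)\,d\zeta = \int_{r_1<\|w\|<r_2}\eta(w)\,\partial^2_{x_ix_j}\gamma(w)\,e^{\tau\mathrm{Tr}B}\,dw,
\]
whose right-hand side is manifestly independent of $z$. This already yields for free the independence-of-$z$ assertion for the limit. Since $\mathrm{supp}\,\eta \subset \{\|w\|\le\rho_0\}$ with $\rho_0<1$, we have $|\tau|<1$ on the support, so $e^{\tau\mathrm{Tr}B}$ is bounded; moreover $\gamma(x,t)e^{t\mathrm{Tr}B} = \tilde\gamma(x,t) := (4\pi)^{-N/2}(\det C(t))^{-1/2}\exp(-\langle C^{-1}(t)x,x\rangle/4)$, and since $e^{t\mathrm{Tr}B}$ does not depend on $x$, the integrand becomes $\eta(w)\,\partial^2_{x_ix_j}\tilde\gamma(w)$. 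The task is thus reduced to bounding $\int_{r_1<\|w\|<r_2}\eta\,\partial^2_{x_ix_j}\tilde\gamma\,dw$ uniformly in $0<r_1<r_2$ and showing the limit as $r_1\to 0^+$ exists.

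Next I would write $\tilde\gamma = \gamma_0 + (\tilde\gamma - \gamma_0)$ and handle the two pieces separately. For the principal-part piece, $\partial^2_{x_ix_j}\gamma_0$ is $\delta(\lambda)$-homogeneous of degree $-Q-2$ and is a second spatial derivative of a fundamental solution on the homogeneous group $(\mathbb{R}^{N+1},\odot)$, i.e.\ exactly the Folland--Stein setting already referenced in the introduction. The classical cancellation argument (polar decomposition in the $\|\cdot\|$-norm combined with the identity $L_0\gamma_0=\delta_0$, forcing a vanishing mean over the unit sphere in the dilation-invariant surface measure) yields both a uniform bound $|\int_{r_1<\|w\|<r_2}\partial^2_{x_ix_j}\gamma_0\,dw|\le c$ and the existence of the limit as $r_1\to 0^+$. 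Inserting the cutoff $\eta$ only introduces a bounded commutator supported in $\rho_0/2\le\|w\|\le\rho_0$, where the kernel is smooth.

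The main obstacle is the remainder $\int\eta\,\partial^2_{x_ix_j}(\tilde\gamma-\gamma_0)\,dw$. Using (\ref{C-C_0 a})--(\ref{DetC-DetC_0}) I would expand
\[
\tilde\gamma - \gamma_0 = \gamma_0\cdot\big[(1+O(t))\exp(-O(t)\langle C_0^{-1}(t)x,x\rangle/4) - 1\big],
\]
differentiate twice in $x_i,x_j$, and use the estimates on $C_0^{-1}(t)$ stemming from (\ref{C_0 dilat}) to obtain a pointwise bound of the form $|\partial^2_{x_ix_j}(\tilde\gamma-\gamma_0)(x,t)| \le c\,\gamma_{0,\delta}(x,t)$ on $\mathrm{supp}\,\eta$. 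The crucial gain is that the $O(t)$ factor cancels the $1/t$ singularity present in the a priori bound (\ref{bound D2 gamma}), so the resulting Gaussian majorant is absolutely integrable over $\mathrm{supp}\,\eta$ (by exactly the computation performed for the term $I$ in the proof of Proposition \ref{Prop k_infty}). Hence $\eta\,\partial^2_{x_ix_j}(\tilde\gamma-\gamma_0)\in L^1(\mathbb{R}^{N+1})$, which gives the uniform bound on the shell integrals and, by dominated convergence, the existence of the principal-value limit.
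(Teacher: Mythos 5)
Your proposal starts exactly as the paper does --- the change of variables $w=\zeta^{-1}\circ z$ of Lemma \ref{Lemma Jacobian}, which immediately gives $z$-independence --- but then takes a genuinely different route. The paper applies the divergence theorem to the shell integral of $\partial^2_{x_ix_j}\gamma\cdot e^{\tau\mathrm{Tr}B}$ directly, obtaining $I(r_2)-I(r_1)$ with $I(\rho)=\int_{\|w\|=\rho}\partial_{x_i}\gamma\,e^{\tau\mathrm{Tr}B}\nu_j\,d\sigma$, and then invokes \cite[Lemma 2.10]{LP} for the fact that $I(\rho)$ converges as $\rho\to0^+$; boundedness on $(0,\rho_0/2]$ follows from continuity, and the outer shell $\rho_0/2\le\|w\|\le\rho_0$ is handled by the pointwise size bound. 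You instead split $\tilde\gamma=\gamma_0+(\tilde\gamma-\gamma_0)$ and deal with the two pieces separately. This is a valid and more self-contained alternative (you replace the citation of \cite[Lemma 2.10]{LP} with explicit estimates), and it buys a direct pointwise comparison between $\gamma$ and $\gamma_0$ that can be reused elsewhere. Three points, though, are glossed over and would need to be filled in. First, the cancellation for the homogeneous part is best phrased not as ``$L_0\gamma_0=\delta_0$ forces a vanishing spherical mean'' but by noticing that after the divergence theorem the boundary integral $\int_{\|w\|=\rho}\partial_{x_i}\gamma_0\,\nu_j\,d\sigma$ is exactly $\rho$-independent under the parabolic dilations (a scaling computation using that $\partial_{x_i}\gamma_0$ is $\delta(\lambda)$-homogeneous of degree $-Q-1$ and $q_j=1$ for $j\le p_0$), so the annular integral of $\partial^2_{x_ix_j}\gamma_0$ is actually zero. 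Second, to get $|\partial^2_{x_ix_j}(\tilde\gamma-\gamma_0)|\le c\,(\det C_0(t))^{-1/2}e^{-c'\langle C_0^{-1}(t)x,x\rangle}$ on $\mathrm{supp}\,\eta$ you must upgrade the scalar identities (\ref{C-C_0 b})--(\ref{DetC-DetC_0}) to the operator inequality $-\,c\,t\,C_0^{-1}(t)\preceq C^{-1}(t)-C_0^{-1}(t)\preceq c\,t\,C_0^{-1}(t)$ (this follows since the quadratic-form comparison holds uniformly in $x$), which in particular yields $|(C^{-1}(t)-C_0^{-1}(t))_{ij}|=O(1)$ for $i,j\le p_0$ --- the precise mechanism by which the $O(t)$ kills the $1/t$; the Gaussian bookkeeping (e.g.\ $|e^{f}-e^{f_0}|\le|f-f_0|e^{f_0(1-\varepsilon)}$ for $f=-\frac14\langle C^{-1}x,x\rangle$ and $f_0$ its $C_0$-analogue) is straightforward but nontrivial. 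Third, your reference to ``the computation for term $I$ in Proposition \ref{Prop k_infty}'' does not apply, since that computation is confined to $|t|\ge 1/16$; the relevant integrability near the origin is simply $\int_0^{\rho_0^2}\int_{\mathbb{R}^N}(\det C_0(t))^{-1/2}e^{-c\langle C_0^{-1}(t)x,x\rangle}\,dx\,dt=c\rho_0^2<\infty$ by the Gaussian normalization. With these points made precise the argument goes through.
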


\begin{proof}
The change of variables $w=\zeta^{-1}\circ z$ (see Lemma \ref{Lemma Jacobian})
shows that%
\begin{align}
\int_{r_{1}<\left\Vert \zeta^{-1}\circ z\right\Vert <r_{2}}k_{0}\left(
\zeta^{-1}\circ z\right)  d\zeta &  =\int_{r_{1}<\left\Vert w\right\Vert
<r_{2}}k_{0}\left(  w\right)  e^{\tau\text{Tr}B}dw=\label{shell}\\
\text{ (for }r_{2}  &  \leq\frac{\rho_{0}}{2}\text{) \ \ }=\int_{r_{1}%
<\left\Vert w\right\Vert <r_{2}}\partial_{x_{i}x_{j}}^{2}\gamma\left(
w\right)  e^{\tau\text{Tr}B}dw\nonumber
\end{align}
with $w=\left(  \xi,\tau\right)  $. \ However, by the divergence theorem the
last integral equals%
\begin{align*}
&  \int_{\left\Vert w\right\Vert =r_{2}}\partial_{x_{i}}\gamma\left(
w\right)  e^{\tau\text{Tr}B}\nu_{j}d\sigma\left(  w\right)  -\int_{\left\Vert
w\right\Vert =r_{1}}\partial_{x_{i}}\gamma\left(  w\right)  e^{\tau\text{Tr}%
B}\nu_{j}d\sigma\left(  w\right) \\
&  \equiv I\left(  r_{2}\right)  -I\left(  r_{1}\right)  .
\end{align*}
It is shown in \cite[Lemma 2.10]{LP} that%
\[
I\left(  \rho\right)  \rightarrow\int_{\left\Vert w\right\Vert =1}%
\partial_{x_{i}}\gamma_{0}\left(  w\right)  e^{\tau\text{Tr}B}\nu_{j}%
d\sigma\left(  w\right)  \text{ as }\rho\rightarrow0
\]
with $\gamma_{0}$ as in (\ref{gamma star}). Since, on the other hand,
$I\left(  \rho\right)  $ is continuous for $\rho\in(0,1/2],$ we conclude that
$I\left(  \rho\right)  $ is bounded for $\rho\in\left[  0,\frac{\rho_{0}}%
{2}\right]  $. This implies the first statement in the Lemma if $r_{2}\leq
\rho_{0}/2.$ Note that we can always assume $r_{2}\leq\rho_{0}$, because
$k_{0}\left(  w\right)  =0$ for $\left\Vert w\right\Vert >\rho_{0}.$ Then, if
$\rho_{0}/2\leq r_{2}\leq\rho_{0},$ we can write%
\[
\left\vert \int_{\rho_{0}/2\leq\left\Vert w\right\Vert <r_{2}}k_{0}\left(
w\right)  e^{\tau\text{Tr}B}dw\right\vert \leq\int_{\rho_{0}/2\leq\left\Vert
w\right\Vert \leq\rho_{0}}c\left\Vert w\right\Vert ^{-\left(  2+Q\right)
}dw=c.
\]

The second statement follows by a similar argument.
\end{proof}

\section{$L^{p}$ estimates on singular integrals on nonhomogeneous spaces}

We now want to apply to our singular kernel an abstract result, proved in
\cite{B}, which we are going to recall now.

Let $X$ be a set. A function $d:X\times X\rightarrow\mathbb{R}$ is called a
\textit{quasisymmetric quasidistance }on $X$ if there exists a \ constant
$c_{d}\geqslant1$ such that for any $x,y,z\in X$:
\[
d\left(  x,y\right)  \geqslant0\text{ and }d\left(  x,y\right)
=0\Leftrightarrow x=y;
\]%
\begin{equation}
d\left(  x,y\right)  \leq c_{d}d\left(  y,x\right)  ; \label{sym}%
\end{equation}%
\begin{equation}
d\left(  x,y\right)  \leqslant c_{d}\left(  d\left(  x,z\right)  +d\left(
z,y\right)  \right)  . \label{triang}%
\end{equation}
If $d$ is a quasisymmetric quasidistance, then%
\[
d^{\ast}\left(  x,y\right)  =d\left(  x,y\right)  +d\left(  y,x\right)
\]
is a quasidistance, equivalent to $d$; $d^{\ast}$ will be called \textit{the
symmetrized quasidistance }of $d.$

\begin{definition}
\label{Def nonhomogeneous}We will say that $\left(  X,d,\mu,k\right)  $ is a
\emph{nonhomogeneous space with Calder\'{o}n-Zygmund kernel} $k$ if:

\begin{enumerate}
\item $\left(  X,d\right)  $ is a set endowed with a quasisymmetric
quasidistance $d,$ such that the $d$-balls are open with respect to the
topology induced by $d$;

\item $\mu$ is a positive regular Borel measure on $X,$ and there exist two
positive constants $A,n$ such that:%
\begin{equation}
\mu\left(  B\left(  x,\rho\right)  \right)  \leq A\rho^{n}\text{ for any }x\in
X,\rho>0;\label{dimension}%
\end{equation}

\item $k\left(  x,y\right)  $ is a real valued measurable kernel defined in
$X\times X,$ and there exists a positive constant $\beta$ such that:%
\begin{equation}
\left\vert k\left(  x,y\right)  \right\vert \leq\frac{A}{d\left(  x,y\right)
^{n}}\text{ for any }x,y\in X; \label{growth}%
\end{equation}%
\begin{equation}
\left\vert k\left(  x,y\right)  -k\left(  x_{0},y\right)  \right\vert \leq
A\frac{d\left(  x_{0},x\right)  ^{\beta}}{d\left(  x_{0},y\right)  ^{n+\beta}}
\label{mean value}%
\end{equation}
for any $x_{0},x,y\in X$ with $d\left(  x_{0},y\right)  \geq Ad\left(
x_{0},x\right)  ,$ where $n,A$ are as in (\ref{dimension}).
\end{enumerate}
\end{definition}

\begin{theorem}
\label{Thm Lp}(See Theorem 3 in \cite{B}). Let $\left(  X,d,\mu,k\right)  $ be
a bounded and separable nonhomogeneous space with Calder\'{o}n-Zygmund kernel
$k.$ Also, assume that

(i) $k^{\ast}\left(  x,y\right)  \equiv k\left(  y,x\right)  $ satisfies
(\ref{mean value});

(ii) there exists a constant $B>0$ such that%
\begin{equation}
\left\vert \int_{d\left(  x,y\right)  >\rho}k\left(  x,y\right)  d\mu\left(
y\right)  \right\vert +\left\vert \int_{d\left(  x,y\right)  >\rho}k^{\ast
}\left(  x,y\right)  d\mu\left(  y\right)  \right\vert \leqslant B
\label{cancellation}%
\end{equation}
for any $\rho>0,x\in X$;

(iii) for a.e. $x\in X,$ the limits
\[
\lim_{\rho\rightarrow0}\int_{d\left(  x,y\right)  >\rho}k\left(  x,y\right)
d\mu\left(  y\right)  ;\text{ \ \ }\lim_{\rho\rightarrow0}\int_{d\left(
x,y\right)  >\rho}k^{\ast}\left(  x,y\right)  d\mu\left(  y\right)
\]
exist finite. Then the operator%
\[
Tf\left(  x\right)  \equiv\lim_{\varepsilon\rightarrow0}T_{\varepsilon
}f\left(  x\right)  \equiv\lim_{\varepsilon\rightarrow0}\int_{d\left(
x,y\right)  >\varepsilon}k\left(  x,y\right)  f\left(  y\right)  d\mu\left(
y\right)
\]
is well defined for any $f\in L^{1}\left(  X\right)  ,$ and%
\[
\left\Vert Tf\right\Vert _{L^{p}\left(  X\right)  }\leq c_{p}\left\Vert
f\right\Vert _{L^{p}\left(  X\right)  }\text{ for any }p\in\left(
1,\infty\right)  ;
\]
moreover, $T$ is weakly $\left(  1,1\right)  $ continuous. The constant
$c_{p}$ only depends on all the constants implicitly involved in the
assumptions: $p,c_{d},A,B,n,\beta,$diam$\left(  X\right)  $.
\end{theorem}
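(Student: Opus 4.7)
My plan is to adapt the Nazarov--Treil--Volberg program for Calder\'on--Zygmund operators in non-doubling (``nonhomogeneous'') settings to the abstract quasimetric framework of Definition \ref{Def nonhomogeneous}. The argument naturally splits into three stages: (a) uniform $L^{2}$ boundedness of the truncated operators
\[
T_{\varepsilon}f(x)=\int_{d(x,y)>\varepsilon}k(x,y)\,f(y)\,d\mu(y);
\]
(b) a weak $(1,1)$ estimate via an ad hoc Calder\'on--Zygmund decomposition; (c) interpolation and duality to cover all $p\in(1,\infty)$, together with the existence of the principal-value limit.

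For stage (a), I would exploit the cancellation condition (\ref{cancellation}), which plays exactly the role of the $T(1),\,T^{*}(1)\in BMO$ hypothesis in the classical $T(1)$ theorem. The standard technique is Cotlar's almost-orthogonality argument applied to $T_{\varepsilon}T_{\varepsilon}^{*}$ and $T_{\varepsilon}^{*}T_{\varepsilon}$: the growth bound (\ref{growth}), the H\"older-type estimates (\ref{mean value}) for both $k$ and $k^{*}$ (this is where hypothesis (i) is essential), the dimension bound (\ref{dimension}) and the cancellation (\ref{cancellation}) combine to yield kernel estimates on the composed kernels strong enough to run a Schur-type test. The boundedness of $X$ together with (\ref{dimension}) implies $\mu(X)<\infty$, which sidesteps several technicalities that arise in the infinite-measure non-doubling setting.

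For stage (b), I would construct a Calder\'on--Zygmund decomposition $f=g+\sum_{j}b_{j}$ at level $\alpha$ with respect to the $d^{*}$-quasidistance. Since $\mu$ need not be doubling, the classical stopping-time argument on dyadic cubes is not available; instead one uses a Whitney-type covering adapted to $d$, controlled only by the upper bound (\ref{dimension}) and by $\mu(X)<\infty$. The good part $g$, bounded by $C\alpha$, is handled by the $L^{2}$ bound of stage (a) via the standard inequality $\|g\|_{2}^{2}\leq C\alpha\|f\|_{1}$. Each atom $b_{j}$ has vanishing integral and is supported in a ball $B_{j}$ centered at some $c_{j}$; outside a dilate $\widetilde{B_{j}}$ one exploits the kernel smoothness (\ref{mean value}) together with the cancellation $\int b_{j}\,d\mu=0$ to get a pointwise bound of the form
\[
|Tb_{j}(x)|\leq C\int|b_{j}(y)|\,\frac{d(c_{j},y)^{\beta}}{d(x,c_{j})^{n+\beta}}\,d\mu(y),
\]
which is integrable in $x$ off $\widetilde{B_{j}}$ and sums correctly; inside $\bigcup_{j}\widetilde{B_{j}}$ one uses the dimensional bound on $\mu(\widetilde{B_{j}})$ and $\sum_{j}\|b_{j}\|_{1}\leq C\|f\|_{1}$. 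Marcinkiewicz interpolation between weak $(1,1)$ and strong $(2,2)$ then gives $L^{p}$ for $1<p<2$; since all hypotheses are symmetric in $k$ and $k^{*}$, the adjoint $T^{*}$ satisfies the same bounds, and duality gives $L^{p}$ for $2<p<\infty$. Finally, the pointwise convergence hypothesis (iii) combined with uniform $L^{p}$ bounds on $T_{\varepsilon}$ promotes the truncated estimates to the genuine principal-value operator on a dense class and then on all of $L^{p}$.

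The main obstacle is stage (a): obtaining the $L^{2}$ bound in a non-doubling, only quasisymmetric setting without any a priori weak-type assumption on $T$. Everything else follows reasonably familiar, if delicate, paths once $L^{2}$ is in hand; it is the $L^{2}$ estimate that forces all the structural hypotheses---the two-sided kernel smoothness, the two-sided cancellation and the dimensional growth---to be used in a coordinated way, and is the technical heart of \cite{B}.
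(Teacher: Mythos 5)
The paper does not prove Theorem \ref{Thm Lp} at all: the parenthetical ``See Theorem 3 in \cite{B}'' signals that the result is imported wholesale from Bramanti's companion paper and used here purely as a black box, so there is no in-paper argument to compare your proposal against. What can be compared is your strategy against what \cite{B} apparently does. The title of \cite{B} --- ``Singular integrals on nonhomogeneous spaces: $L^{2}$ and $L^{p}$ continuity \emph{from H\"older estimates}'' --- makes it clear that the $L^{2}$ bound there is obtained by first proving uniform H\"older-norm ($C^{\alpha}$) bounds for the truncated operators $T_{\varepsilon}$ (using the growth, mean-value, and cancellation conditions) and then passing from H\"older boundedness to $L^{2}$ and $L^{p}$ boundedness. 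That is a markedly different entry point from your proposed Cotlar almost-orthogonality argument.

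This difference is not cosmetic: your stage (a) has a genuine gap. Cotlar's lemma requires decomposing $T_{\varepsilon}$ into dyadic pieces $T_{j}$ with decaying almost-orthogonality bounds on $T_{i}T_{j}^{*}$ and $T_{i}^{*}T_{j}$, and in practice those bounds rely on each dyadic kernel piece having genuine vanishing mean. The hypothesis (\ref{cancellation}) only controls cumulative integrals $\int_{d(x,y)>\rho}k(x,y)\,d\mu(y)$ uniformly in $\rho$; the dyadic annular pieces $\int_{\rho_{1}<d(x,y)<\rho_{2}}k(x,y)\,d\mu(y)$ are merely bounded, not zero, and for a non-convolution kernel (which is exactly what appears in Proposition \ref{Prop check assumptions}, where $k(x,y)=a(x)k_{0}(y^{-1}\circ x)b(y)$) the compositions $T_{i}T_{j}^{*}$ do not inherit enough decay for the Schur test to close. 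This is precisely why in the Nazarov--Treil--Volberg nonhomogeneous theory the $L^{2}$ input is a $T(b)$-type theorem rather than a Cotlar argument, and why \cite{B} substitutes H\"older estimates for that input. Your stages (b) and (c) --- Calder\'on--Zygmund decomposition on a non-doubling measure for the weak $(1,1)$ bound, then Marcinkiewicz interpolation and duality --- are conceptually sound once $L^{2}$ is in hand; it is the $L^{2}$ step, as you yourself flag, where the proposal as written would break down.
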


We will also need the notion of H\"{o}lder space in this context:

\begin{definition}
[H\"{o}lder spaces]We will say that $f\in C^{\alpha}\left(  X\right)  $, for
some $\alpha>0,$ if%
\[
\left\Vert f\right\Vert _{\alpha}\equiv\left\Vert f\right\Vert _{\infty
}+\left\vert f\right\vert _{\alpha}\equiv\sup_{x\in X}\left\vert f\left(
x\right)  \right\vert +\sup_{x,y\in X,x\neq y}\frac{\left\vert f\left(
x\right)  -f\left(  y\right)  \right\vert }{d\left(  x,y\right)  ^{\alpha}%
}<\infty.
\]

\end{definition}

Our aim now is to apply the previous abstract result to the singular integral
$T$ with kernel $k_{0}$ on a bounded domain, say a ball $B\left(
z_{0},R\right)  .$ More precisely, as we shall see later, what we need is an
estimate of the kind%
\[
\left\Vert Tf\right\Vert _{L^{p}\left(  B\left(  z_{0},R\right)  \right)
}\leq c\left\Vert f\right\Vert _{L^{p}\left(  B\left(  z_{0},R\right)
\right)  }%
\]
for $1<p<\infty,$ where $R$ is a small radius fixed once and for all, $z_{0}$
is any point in the strip $S,$ and the constant $c$ is independent from
$z_{0}$. \ Note that, by Proposition \ref{Prop quasidist}, our $d$ is actually
a quasisymmetric quasidistance in $X=B\left(  z_{0},R\right)  ,$ as soon as
$R$ is small enough; moreover, by Proposition \ref{Prop measure balls} the
Lebesgue measure of a $d$-ball satisfies the required dimensional bound
(\ref{dimension}) with $n=Q+2$. Also, Proposition \ref{Prop kernel} and Lemma
\ref{Lemma cancellation} suggest that the kernel $k_{0}$ satisfies the
properties required by Theorem \ref{Thm Lp}. However, there is a subtle
problem with this last assertion. Namely, saying, for instance, that $k_{0}$
satisfies the cancellation property in $B\left(  z_{0},R\right)  $ means that%
\[
\left\vert \int_{\zeta\in B\left(  z_{0},R\right)  :r_{1}<d\left(
z,\zeta\right)  <r_{2}}k_{0}\left(  \zeta^{-1}\circ z\right)  d\zeta
\right\vert \leq c
\]
whereas what we know (see Lemma \ref{Lemma cancellation}) is that%
\[
\left\vert \int_{\zeta\in\mathbb{R}^{N+1}:r_{1}<d\left(  z,\zeta\right)
<r_{2}}k_{0}\left(  \zeta^{-1}\circ z\right)  d\zeta\right\vert \leq c.
\]
The problem is that restricting the kernel $k_{0}$ to the domain $B\left(
z_{0},R\right)  $ has the effect of a rough cut on the kernel, which can harm
the validity of the cancellation property. To realize how things can actually
go wrong, take the restriction of the Hilbert transform on the interval
$\left(  0,1\right)  $: the singular integral operator%
\[
Tf\left(  x\right)  =\lim_{\varepsilon\rightarrow0}\int_{y\in\left(
0,1\right)  ,\left\vert x-y\right\vert >\varepsilon}\frac{f\left(  y\right)
}{x-y}dy
\]
is not so friendly, because
\[
T1\left(  x\right)  =\lim_{\varepsilon\rightarrow0}\int_{y\in\left(
0,1\right)  ,\left\vert x-y\right\vert >\varepsilon}\frac{1}{x-y}%
dy=\log\left(  \frac{x}{1-x}\right)  \text{ for any }x\in\left(  0,1\right)
\]
so (\ref{cancellation}) does not hold in this case. A more cautious choice,
then, consists in cutting the kernel smoothly, by a couple of H\"{o}lder
continuous cutoff functions. Namely, we have the following

\begin{proposition}
\label{Prop check assumptions}Let $k_{0}$ be the above kernel (see
(\ref{k_0})). There exists a constant $R_{0}>0$\ such that, for any $z_{0}\in
S,$ $R\leq R_{0},$ if $a,b$ are two cutoff functions belonging to $C^{\alpha
}\left(  \mathbb{R}^{N+1}\right)  $ for some $\alpha>0,$ with sprt$\,a,$
sprt$\,b\subset B\left(  z_{0},R\right)  ,$ and we set%
\begin{equation}
k\left(  x,y\right)  =a\left(  x\right)  k_{0}\left(  y^{-1}\circ x\right)
b\left(  y\right)  , \label{k}%
\end{equation}
then:

(a) $k$ satisfies (\ref{growth}),(\ref{mean value}) and (\ref{cancellation})
in $B\left(  z_{0},R\right)  $ (with possibly other constants). Explicitly,
\textquotedblleft(\ref{cancellation}) in $B\left(  z_{0},R\right)
$\textquotedblright\ means%
\begin{equation}
\left\vert \int_{y\in B\left(  z_{0},R\right)  :r_{1}<d\left(  x,y\right)
<r_{2}}k\left(  x,y\right)  d\mu\left(  y\right)  \right\vert \leq c.
\label{canc}%
\end{equation}

(b) for any $x\in B\left(  z_{0},R\right)  $ there exists%
\[
h\left(  x\right)  \equiv\lim_{\varepsilon\rightarrow0}\int_{y\in B\left(
z_{0},R\right)  :d\left(  x,y\right)  >\varepsilon}k\left(  x,y\right)
d\mu\left(  y\right)  .
\]
Finally, all the constants appearing in the above estimates about $k$ depend
on $z_{0},R$ and the cutoff functions $a,b$ only through the $C^{\alpha}$
norms of $a,b$.
\end{proposition}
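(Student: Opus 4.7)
The plan is to reduce every required property of $k(x,y)=a(x)\,k_{0}(y^{-1}\circ x)\,b(y)$ to the corresponding property of $k_{0}$ already established in Proposition \ref{Prop kernel} and Lemma \ref{Lemma cancellation}, using the H\"older continuity of the cutoffs $a,b$ to absorb the error terms. Throughout, I would fix $R_{0}$ small enough so that, by Proposition \ref{Prop quasidist}, $d$ is a quasisymmetric quasidistance on $B(z_{0},R)$ for every $z_{0}\in S$ and $R\leq R_{0}$, and so that the doubling property and dimensional bound of Proposition \ref{Prop measure balls} apply. I would also choose $R_{0}\leq \rho_{0}/C$ so that the support restrictions $\mathrm{sprt}\,a,\mathrm{sprt}\,b\subset B(z_{0},R)$ interact well with the fact that $k_{0}(w)=0$ for $\|w\|\geq\rho_{0}$.

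\textbf{Growth and mean value.} The growth bound (\ref{growth}) is immediate from Proposition \ref{Prop kernel} and $\|a\|_{\infty},\|b\|_{\infty}\leq \|a\|_{\alpha},\|b\|_{\alpha}$. For the mean value bound (\ref{mean value}), I would split
\[
k(x,y)-k(x_{0},y)=[a(x)-a(x_{0})]\,k_{0}(y^{-1}\circ x)\,b(y)+a(x_{0})\,[k_{0}(y^{-1}\circ x)-k_{0}(y^{-1}\circ x_{0})]\,b(y).
\]
The second piece is handled by the mean value estimate in Proposition \ref{Prop kernel}, after using Remark \ref{Remark M} to replace $d(x_{0},x)/d(x_{0},y)^{Q+3}$ by $d(x_{0},x)^{\beta}/d(x_{0},y)^{Q+2+\beta}$ (valid since the ratio $d(x_{0},x)/d(x_{0},y)$ is bounded by $1/M$ and distances are bounded by $cR\leq 1$). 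The first piece uses $|a(x)-a(x_{0})|\leq\|a\|_{\alpha}d(x,x_{0})^{\alpha}$ together with the growth bound on $k_{0}$ and once again the boundedness of $d$ in $B(z_{0},R)$. Choosing $\beta=\alpha$ (or $\min(\alpha,1)$ if needed) yields (\ref{mean value}) with constants depending only on $\|a\|_{\alpha},\|b\|_{\alpha}$. The symmetric estimate for $k^{\ast}(x,y)=k(y,x)$ is identical, with the roles of $a$ and $b$ swapped.

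\textbf{Cancellation.} This is where I expect the main delicate point to lie, for exactly the reason flagged before the Proposition: the naive restriction of the cancellation shell to $B(z_{0},R)$ could destroy the cancellation. The key observation is that, since $\mathrm{sprt}\,b\subset B(z_{0},R)$, the integration domain in (\ref{canc}) can be extended to the full annulus $\{r_{1}<d(x,y)<r_{2}\}$ without changing the integral. I would then add and subtract $b(x)$:
\[
\int_{r_{1}<d(x,y)<r_{2}}\!\!\! k_{0}(y^{-1}\circ x)\,b(y)\,dy=b(x)\!\int_{r_{1}<d(x,y)<r_{2}}\!\!\! k_{0}(y^{-1}\circ x)\,dy+\!\int_{r_{1}<d(x,y)<r_{2}}\!\!\! k_{0}(y^{-1}\circ x)\,[b(y)-b(x)]\,dy.
\]
The first term is bounded by $\|b\|_{\infty}$ times the cancellation constant of Lemma \ref{Lemma cancellation}. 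The second is controlled by $\|b\|_{\alpha}\int_{d(x,y)<r_{2}}d(x,y)^{\alpha-Q-2}\,dy$, which, by Proposition \ref{Prop measure balls}(i) and a standard layer-cake decomposition into dyadic shells, converges and is bounded by $c\|b\|_{\alpha}R^{\alpha}$. Multiplying by $|a(x)|\leq\|a\|_{\infty}$ gives (\ref{canc}). The analogous argument for $k^{\ast}$, where the roles of $a$ and $b$ are swapped, also relies on the fact that $z\mapsto z^{-1}$ has a smooth Jacobian and that the cancellation of Lemma \ref{Lemma cancellation} (established via the divergence theorem) is invariant under this change of variables.

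\textbf{Principal value.} The existence of $h(x)$ in part (b) follows from exactly the same splitting: the second summand is absolutely integrable near $y=x$ by the $\alpha$-H\"older estimate on $b$ combined with the $d(x,y)^{-(Q+2)}$ growth of $k_{0}$, so it converges in $L^{1}$ as $\varepsilon\to 0$; the first summand converges by Lemma \ref{Lemma cancellation}. The main obstacle, as anticipated, is the cancellation verification: once one sees that the H\"older continuity of $b$ is exactly what is needed to repair the rough cut caused by the support constraint, everything else is bookkeeping with the constants depending only on $\|a\|_{\alpha}$ and $\|b\|_{\alpha}$, as required.
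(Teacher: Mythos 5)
Your proof follows the same route as the paper's: the growth bound is immediate; the mean-value estimate is obtained from the same two-term splitting $[a(x)-a(x_0)]k_0 b + a(x_0)[k_0(\cdot)-k_0(\cdot)]b$; the cancellation uses the same two key observations (extend the integration domain to all of $\mathbb{R}^{N+1}$ thanks to $\operatorname{sprt}b\subset B(z_0,R)$, then add and subtract $b(x)$ so that the H\"older seminorm of $b$ absorbs the rough cut); and part (b) is the same argument. The only ingredient you phrase slightly differently is the bound $\int_{d(x,y)<\rho} d(x,y)^{\alpha-Q-2}\,d\mu(y)\leq c\rho^\alpha$, which you derive from the dimensional estimate on ball measures via dyadic shells while the paper gets it by a dilation argument; either works.

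One small caution: your extra stipulation $R_0\leq\rho_0/C$ is neither needed here nor consistent with the way $\rho_0$ is eventually fixed. In the paper $R_0$ is chosen independently of $\rho_0$ (only to guarantee $d(x,y)+d(y,x)\leq1$ on $B(z_0,R_0)$), and later in Theorem \ref{Thm conclusion} one takes $\rho_0<R_0/(2C)$; imposing $R_0\leq\rho_0/C$ as well would be circular. The cancellation argument does not need any link between $R_0$ and $\rho_0$: the bound $r_2\leq cR$ comes purely from $\operatorname{sprt}b\subset B(z_0,R)$, as you in fact use. On the positive side, you make explicit the verification for $k^\ast$ (swap $a$ and $b$, change variables with a bounded Jacobian), which the paper leaves implicit; this is a sensible addition, though one should note it also uses the local comparability $\|w^{-1}\|\asymp\|w\|$ from Proposition \ref{Prop quasidist} to transfer Lemma \ref{Lemma cancellation}.
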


\begin{remark}
Since in this Proposition and its proof the distinction between space and time
variables is irrelevant, changing for a moment our notation we have denoted by
$x,y,x_{0}...$ the variables in $\mathbb{R}^{N+1},$ and by $d\mu$ the Lebesgue
measure $dxdt$ in $\mathbb{R}^{N+1}.$
\end{remark}

\begin{proof}
We will apply several times the properties of the kernel $k_{0}$ proved in
Proposition \ref{Prop kernel} and Lemma \ref{Lemma cancellation}. Also, we
will use twice the following simple fact:%
\begin{equation}
\int_{d\left(  x,y\right)  <\rho}\frac{d\mu\left(  y\right)  }{d\left(
x,y\right)  ^{Q+2-\alpha}}\leq c\rho^{\alpha}\text{ for any }\rho>0
\label{hom}%
\end{equation}
which can be checked by a dilation argument.

We chose $R_{0}$ small enough so that $x,y\in B\left(  z_{0},R_{0}\right)  $
imply
\[
d\left(  x,y\right)  +d\left(  y,x\right)  \leq1\text{.}%
\]
Let $0<R\leq R_{0}.$

(a). Condition (\ref{growth}) for $k$ in $B\left(  z_{0},R\right)  $ obviously
follows from the analogous property of $k_{0}$. As to (\ref{mean value}), we
can write%
\begin{align*}
k\left(  x,y\right)  -k\left(  x_{0},y\right)   &  =\left[  a\left(  x\right)
-a\left(  x_{0}\right)  \right]  k_{0}\left(  y^{-1}\circ x\right)  b\left(
y\right)  +\\
+a\left(  x_{0}\right)  \left[  k_{0}\left(  y^{-1}\circ x\right)
-k_{0}\left(  y^{-1}\circ x_{0}\right)  \right]  b\left(  y\right)   &  =I+II.
\end{align*}
Now, for $d\left(  x_{0},y\right)  >Md\left(  x_{0},x\right)  $%
\[
\left\vert I\right\vert \leq\left\vert a\right\vert _{\alpha}d\left(
x,x_{0}\right)  ^{\alpha}\cdot\frac{c}{d\left(  x,y\right)  ^{Q+2}}\left\Vert
b\right\Vert _{\infty}\leq c\frac{d\left(  x,x_{0}\right)  ^{\alpha}}{d\left(
x_{0},y\right)  ^{Q+2+\alpha}}.
\]
We have implicitly used the fact that the functions $d\left(  x_{0},y\right)
,d\left(  x,y\right)  $ are bounded by some absolute constant (since
$x_{0},x,y\in B\left(  z_{0},R\right)  $), and the equivalence between
$d\left(  x_{0},y\right)  $ and $d\left(  x,y\right)  ,$ which holds under the
assumption $d\left(  x_{0},y\right)  >Md\left(  x_{0},x\right)  $ (see Remark
\ref{Remark M}).

Moreover, since $k_{0}$ satisfies (\ref{mean value}),
\[
\left\vert II\right\vert \leq\left\Vert a\right\Vert _{\infty}\left\Vert
b\right\Vert _{\infty}c\frac{d\left(  x,x_{0}\right)  }{d\left(
x_{0},y\right)  ^{Q+3}},
\]
hence (\ref{mean value}) holds for $k$ in $B\left(  z_{0},R\right)  ,$ with
$n=Q+2,\beta=\alpha$.

To check (\ref{canc}) let us start noting that, since sprt$\,b\subset B\left(
z_{0},R\right)  ,$ we can write, for any $x\in B\left(  z_{0},R\right)  $ and
$0<r_{1}<r_{2}$%
\begin{align*}
&  \int_{y\in B\left(  z_{0},R\right)  :r_{1}<d\left(  x,y\right)  <r_{2}%
}k\left(  x,y\right)  d\mu\left(  y\right)  =\\
&  =a\left(  x\right)  \int_{y\in B\left(  z_{0},R\right)  :r_{1}<d\left(
x,y\right)  <r_{2}}k_{0}\left(  y^{-1}\circ x\right)  b\left(  y\right)
d\mu\left(  y\right)  =\\
&  =a\left(  x\right)  \int_{y\in\mathbb{R}^{N+1}:r_{1}<d\left(  x,y\right)
<r_{2}}k_{0}\left(  y^{-1}\circ x\right)  b\left(  y\right)  d\mu\left(
y\right)  .
\end{align*}
Note that there exists some absolute constant $c>0\ $such that $b\left(
y\right)  $ vanishes if $x\in B\left(  z_{0},R\right)  $ and $d\left(
x,y\right)  \geq cR$; hence we can assume $r_{2}\leq cR$. Under this
condition, the last integral equals%
\begin{align*}
&  a\left(  x\right)  \int_{y\in\mathbb{R}^{N+1}:r_{1}<d\left(  x,y\right)
<r_{2}}k_{0}\left(  y^{-1}\circ x\right)  \left[  b\left(  y\right)  -b\left(
x\right)  \right]  d\mu\left(  y\right)  +\\
&  +a\left(  x\right)  b\left(  x\right)  \int_{y\in\mathbb{R}^{N+1}%
:r_{1}<d\left(  x,y\right)  <r_{2}}k_{0}\left(  y^{-1}\circ x\right)
d\mu\left(  y\right)  \equiv I+II.
\end{align*}
Now, by (\ref{hom})%
\[
\left\vert I\right\vert \leq c\left\Vert a\right\Vert _{\infty}\left\vert
b\right\vert _{\alpha}\int_{d\left(  x,y\right)  <r_{2}}\frac{d\left(
x,y\right)  ^{\alpha}}{d\left(  x,y\right)  ^{Q+2}}d\mu\left(  y\right)
=c\left\Vert a\right\Vert _{\infty}\left\vert b\right\vert _{\alpha}%
r_{2}^{\alpha}\leq c\left\Vert a\right\Vert _{\infty}\left\vert b\right\vert
_{\alpha}R_{0}^{\alpha}%
\]
while, by Lemma \ref{Lemma cancellation}$,$%
\[
\left\vert II\right\vert \leq\left\Vert a\right\Vert _{\infty}\left\Vert
b\right\Vert _{\infty}\left\vert \int_{y\in\mathbb{R}^{N+1}:r_{1}<d\left(
x,y\right)  <r_{2}}k_{0}\left(  y^{-1}\circ x\right)  d\mu\left(  y\right)
\right\vert \leq c\left\Vert a\right\Vert _{\infty}\left\Vert b\right\Vert
_{\infty}.
\]

(b) To show the existence of $h\left(  x\right)  $ let us consider, for
$0<\varepsilon_{1}<\varepsilon_{2}$ and a fixed $x\in B\left(  z_{0},R\right)
$,%
\begin{align*}
&  \int_{y\in B\left(  z_{0},R\right)  :d\left(  x,y\right)  >\varepsilon_{1}%
}k\left(  x,y\right)  d\mu\left(  y\right)  -\int_{y\in B\left(
z_{0},R\right)  :d\left(  x,y\right)  >\varepsilon_{2}}k\left(  x,y\right)
d\mu\left(  y\right) \\
&  =a\left(  x\right)  \int_{y\in B\left(  z_{0},R\right)  :\varepsilon
_{1}<d\left(  x,y\right)  <\varepsilon_{2}}k_{0}\left(  y^{-1}\circ x\right)
b\left(  y\right)  d\mu\left(  y\right) \\
&  =a\left(  x\right)  \int_{y\in\mathbb{R}^{N+1}:\varepsilon_{1}<d\left(
x,y\right)  <\varepsilon_{2}}k_{0}\left(  y^{-1}\circ x\right)  b\left(
y\right)  d\mu\left(  y\right) \\
&  =a\left(  x\right)  \int_{y\in\mathbb{R}^{N+1}:\varepsilon_{1}<d\left(
x,y\right)  <\varepsilon_{2}}k_{0}\left(  y^{-1}\circ x\right)  \left[
b\left(  y\right)  -b\left(  x\right)  \right]  d\mu\left(  y\right)  +\\
&  +a\left(  x\right)  b\left(  x\right)  \int_{y\in\mathbb{R}^{N+1}%
:\varepsilon_{1}<d\left(  x,y\right)  <\varepsilon_{2}}k_{0}\left(
y^{-1}\circ x\right)  d\mu\left(  y\right) \\
&  \equiv I+II.
\end{align*}
Now,
\begin{align*}
\left\vert I\right\vert  &  \leq\left\Vert a\right\Vert _{\infty}%
\int_{d\left(  x,y\right)  <\varepsilon_{2}}\left\vert k_{0}\left(
y^{-1}\circ x\right)  \left[  b\left(  y\right)  -b\left(  x\right)  \right]
\right\vert d\mu\left(  y\right) \\
&  \leq c\left\Vert a\right\Vert _{\infty}\left\vert b\right\vert _{\alpha
}\int_{d\left(  x,y\right)  <\varepsilon_{2}}\frac{d\left(  x,y\right)
^{\alpha}}{d\left(  x,y\right)  ^{Q+2}}d\mu\left(  y\right) \\
&  \leq c\left\Vert a\right\Vert _{\infty}\left\vert b\right\vert _{\alpha
}\varepsilon_{2}^{\alpha}%
\end{align*}
by (\ref{hom}). On the other hand,%
\[
\left\vert II\right\vert \leq\left\Vert a\right\Vert _{\infty}\left\Vert
b\right\Vert _{\infty}\left\vert \int_{y\in\mathbb{R}^{N+1}:\varepsilon
_{1}<d\left(  x,y\right)  <\varepsilon_{2}}k_{0}\left(  y^{-1}\circ x\right)
d\mu\left(  y\right)  \right\vert
\]
which tends to zero as $\varepsilon_{2}\rightarrow0,$ by Lemma
\ref{Lemma cancellation}$.$ This proves the existence of the limit $h\left(
x\right)  .$
\end{proof}

>From Theorem \ref{Thm Lp}, Proposition \ref{Prop check assumptions} and the
previous discussion, we immediately have the following:

\begin{corollary}
\label{Corollary estimate on a ball}For any fixed $z_{0}\in S,$ let
\[
Tf\left(  z\right)  =PV\int_{B\left(  z_{0},R\right)  }k\left(  z,\zeta
\right)  f\left(  \zeta\right)  d\zeta,
\]
with $k,R$ as in the previous Proposition. Then for any $p\in\left(
1,\infty\right)  $ there exists $c>0$ such that%
\[
\left\Vert Tf\right\Vert _{L^{p}\left(  B\left(  z_{0},R\right)  \right)
}\leq c\left\Vert f\right\Vert _{L^{p}\left(  B\left(  z_{0},R\right)
\right)  }%
\]
for any $f\in L^{p}\left(  B\left(  z_{0},R\right)  \right)  .$ The constant
$c$ depends on the cutoff functions $a,b$ only through their $C^{\alpha}$
norms, and does not depend on $z_{0}$ and $R.$
\end{corollary}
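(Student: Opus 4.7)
The plan is to recognize $(B(z_0, R), d, \mu, k)$ as a bounded, separable nonhomogeneous space with Calder\'on-Zygmund kernel, then appeal directly to Theorem \ref{Thm Lp}. First I would verify the abstract data. Proposition \ref{Prop quasidist} gives, for $R \leq R_0$ small enough, that $d$ is quasisymmetric and quasitriangular on $B(z_0, R)$ with a constant $c_d$ independent of $z_0 \in S$. Lemma \ref{Lemma topologies} tells us the $d$-balls are open and that the induced topology is the Euclidean one, hence $X = B(z_0,R)$ is separable and bounded (diameter $\leq 2 c_d R_0$). Proposition \ref{Prop measure balls}(i) supplies the dimensional bound (\ref{dimension}) with $n = Q+2$, uniformly in $z_0 \in S$ and $\rho < 1$.

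Next I would check the kernel hypotheses of Definition \ref{Def nonhomogeneous} and the additional conditions (i)--(iii) of Theorem \ref{Thm Lp}. Proposition \ref{Prop check assumptions} gives all of these directly for $k$ itself: growth, smoothness, cancellation, and existence of the principal value, with the constants depending on $a, b$ only through $\|a\|_\alpha$ and $\|b\|_\alpha$. The remaining task is the analogous statements for the transposed kernel $k^*(x,y) \equiv k(y,x) = b(x)\, k_0(x^{-1}\circ y)\, a(y)$. This has the same algebraic form as $k$ with $a,b$ swapped, but with $k_0(x^{-1}\circ y)$ in place of $k_0(y^{-1}\circ x)$. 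I would repeat the argument of Proposition \ref{Prop check assumptions} essentially verbatim, invoking: the quasisymmetry $d(y,x) \asymp d(x,y)$ on $B(z_0,R)$ (Proposition \ref{Prop quasidist}) to transfer the pointwise estimates of Proposition \ref{Prop kernel} to $k_0(x^{-1}\circ y)$; and the change of variables $w = x^{-1}\circ y$, which is a left translation in $(\mathbb{R}^{N+1},\circ)$ and hence has unit Jacobian, to reduce the cancellation and limit statements for $k_0(x^{-1}\circ y)$ to those already proven for $k_0$ in Lemma \ref{Lemma cancellation}.

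Once (i)--(iii) of Theorem \ref{Thm Lp} are in place, the $L^p(B(z_0,R))$ bound on $T$ is immediate. The constant $c_p$ delivered by Theorem \ref{Thm Lp} depends on $p, c_d, A, B, n, \beta$ and $\mathrm{diam}(X)$; by the preceding setup each of these is either absolute or controlled by $R_0$ and by $\|a\|_\alpha, \|b\|_\alpha$, with no residual dependence on the specific $z_0 \in S$ or the specific $R \leq R_0$. The main (mild) obstacle is thus the bookkeeping required to extend Proposition \ref{Prop check assumptions} to the transposed kernel $k^*$; the rest of the argument is essentially assembling results already proved in the earlier sections.
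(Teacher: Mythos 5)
Your plan follows the paper's own proof, which obtains the corollary by simply citing Theorem \ref{Thm Lp} and Proposition \ref{Prop check assumptions}; you are in fact more explicit than the paper in pointing out that the hypotheses of Theorem \ref{Thm Lp} on the transposed kernel $k^{\ast}(x,y)=k(y,x)$ must also be verified. The way you propose to dispatch $k^{\ast}$, however, is slightly too quick in two places. Under the unit-Jacobian substitution $w=x^{-1}\circ y$, the condition $d(x,y)>\rho$ becomes $\left\Vert w^{-1}\right\Vert >\rho$, not $\left\Vert w\right\Vert >\rho$; since $\left\Vert \cdot\right\Vert $ is not symmetric under the $\circ$-inversion, the reduction to Lemma \ref{Lemma cancellation} needs one more step, namely to use the local equivalence $\left\Vert w^{-1}\right\Vert \asymp\left\Vert w\right\Vert $ (Proposition \ref{Prop quasidist}) together with the growth bound $\left\vert k_{0}(w)\right\vert \leq c\left\Vert w\right\Vert ^{-(Q+2)}$ and Proposition \ref{Prop measure balls} to absorb the two border annuli on which $\{\left\Vert w^{-1}\right\Vert >\rho\}$ and $\{\left\Vert w\right\Vert >c\rho\}$ disagree. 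Secondly, the H\"older estimate required for $k^{\ast}$ is a bound on $\left\vert k_{0}(x^{-1}\circ y)-k_{0}(x_{0}^{-1}\circ y)\right\vert $, which amounts to regularity of $\partial_{x_{i}x_{j}}^{2}\Gamma$ in its \emph{second} argument; Proposition \ref{Prop kernel} records only regularity in the first argument, so this is a separate estimate that must be obtained either by a parallel computation from the explicit Gaussian form of $\gamma$ or from \cite{DP}, and does not follow from quasisymmetry alone. Neither point changes the architecture of your argument, and the paper itself is silent on both, but they should not be passed off as ``essentially verbatim.''
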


We still need the following covering argument:

\begin{lemma}
\label{Lemma Covering}For every $r_{0}>0$ and $K>1$ there exist $\rho
\in\left(  0,r_{0}\right)  $, a positive integer $M$ and a sequence of points
$\left\{  z_{i}\right\}  _{i=1}^{\infty}\subset S$ such that:%
\begin{align*}
S  &  \subset\bigcup\limits_{i=1}^{\infty}B\left(  z_{i},\rho\right)  ;\\
\sum_{i=1}^{\infty}\chi_{B\left(  z_{i},K\rho\right)  }\left(  z\right)   &
\leq M\text{ \ }\forall z\in S.
\end{align*}

\end{lemma}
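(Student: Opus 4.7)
The plan is to use a standard greedy selection (Vitali-type) argument adapted to the bounded nonhomogeneous quasi-metric setting provided by Propositions \ref{Prop quasidist} and \ref{Prop measure balls}. First I would choose $\rho \in (0, r_0)$ small enough so that $K\rho$ lies well below the threshold $1$ appearing in Proposition \ref{Prop quasidist}; this guarantees that the quasi-symmetry inequality $d(z,\zeta)\le c\,d(\zeta,z)$ and the quasi-triangle inequality $d(z,\zeta)\le c(d(z,w)+d(w,\zeta))$ are at our disposal for every pair of balls we will encounter.

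Next I would pick, by a Zorn/greedy argument, a maximal $\rho$-separated sequence $\{z_i\}_{i=1}^\infty\subset S$, i.e. a maximal family of points satisfying $d(z_i,z_j)\ge \rho$ for $i\neq j$. Maximality immediately yields the covering property: for every $z\in S$ there must exist some $z_i$ with $d(z,z_i)<\rho$, for otherwise $z$ could be added to the family. Countability of the family follows from separability of $S$ together with the quasi-symmetry, or alternatively from the fact that the sets $B(z_i,\delta\rho)\cap S$, for a sufficiently small constant $\delta>0$, are pairwise disjoint and each has measure bounded below by $c_1(\delta\rho)^{Q+2}$ (the lower bound is exactly the one extracted in the proof of Proposition \ref{Prop measure balls}(ii)), while $S$ is $\sigma$-finite.

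To obtain the finite-overlap property, fix $z\in S$ and suppose $z\in B(z_{i_k},K\rho)$ for indices $k=1,\dots,M$. Since $K\rho\le 1$, the quasi-triangle inequality combined with quasi-symmetry gives $d(z_{i_k},z_{i_l})\le C K\rho$ for all $k,l$ and, in particular, the small balls $B(z_{i_k},\delta\rho)$ are all contained in a single ball $B(z,C'K\rho)$ with $C'$ an absolute constant depending only on $K$ and $c_d$. These small balls are pairwise disjoint by the construction of $\{z_i\}$ (for $\delta>0$ chosen small enough, using the quasi-triangle inequality once more to conclude from $d(z_i,z_j)\ge \rho$ that $B(z_i,\delta\rho)\cap B(z_j,\delta\rho)=\emptyset$). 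Therefore
\[
M\cdot c_1(\delta\rho)^{Q+2}\ \le\ \sum_{k=1}^M\lvert B(z_{i_k},\delta\rho)\cap S\rvert\ \le\ \lvert B(z,C'K\rho)\cap S\rvert\ \le\ c_2(C'K\rho)^{Q+2},
\]
which bounds $M$ by a constant $M_0=M_0(K,c_d,c_1,c_2,\delta,Q)$ depending only on $K$ and the geometric constants of $(S,d,dz)$, independent of $\rho$ and of the point $z$.

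The main obstacle is purely bookkeeping: the quasidistance $d$ is neither symmetric nor does it satisfy the triangle inequality globally, so each time I invoke Proposition \ref{Prop quasidist} I must check that the distances involved are below $1$; this is why I insist on $K\rho$ being chosen small at the very beginning. The only genuinely geometric ingredient is the two-sided dimensional estimate on $|B(z,\rho)\cap S|$ (upper bound from Proposition \ref{Prop measure balls}(i), lower bound implicit in the proof of Proposition \ref{Prop measure balls}(ii)), which delivers uniformity over $z_0\in S$ and is the key reason why a single integer $M$ works for all $z\in S$.
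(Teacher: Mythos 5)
Your proposal is correct in substance and uses the same Vitali-type skeleton as the paper: pick a maximal discrete family, get the covering from maximality, and get finite overlap from a disjointness-plus-measure comparison. Where you differ is in how the overlap bound is obtained. The paper abstracts the argument into a ``space of locally homogeneous type'' that postulates only a \emph{local doubling} inequality $\mu(B(x,R_2))\le C\mu(B(x,R_1))$, without assuming any two-sided polynomial estimate on ball measures. Since in that generality the balls $B(x_j,\cdot)$ in the overlapping cluster can have wildly different measures, the paper has to run the doubling estimate twice (once centred at $x_i$, once at an arbitrary $x_j$) and then play a minimum-of-measures trick to conclude, producing the bound $N\le M^2$. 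You, by contrast, import directly the concrete two-sided dimensional estimate $c_1\rho^{Q+2}\le |B(z,\rho)\cap S|\le c_2\rho^{Q+2}$ (upper bound from Proposition \ref{Prop measure balls}(i), lower bound read off from the proof of (ii)), which makes all the small balls comparable in measure uniformly in the centre, so a single packing inequality gives the overlap count in one stroke. Both routes are legitimate; yours is more direct and self-contained given the explicit geometry of $(S,d)$, while the paper's is stated so as to apply to any local doubling quasi-metric measure space.

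One small point to tighten: with a nonsymmetric quasidistance, maximality of a $\rho$-separated set in the sense $d(z_i,z_j)\ge\rho$ for $i\ne j$ tells you that for any $z\notin\{z_i\}$ \emph{either} $d(z,z_i)<\rho$ \emph{or} $d(z_i,z)<\rho$ for some $i$; in the second case quasi-symmetry only gives $d(z,z_i)\le c_d\rho$, so what you actually obtain is $S\subset\bigcup_i B(z_i,c_d\rho)$. This is harmless (replace $\rho$ by $\rho/c_d$ at the start, or state the covering with radius $c_d\rho$), but it should be recorded; the paper avoids the issue by working instead with a maximal disjoint family of balls of radius $\rho/(C(C+1))$ and computing directly that any point lies within $d$-distance $\rho$ of some centre. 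Apart from this bookkeeping the argument is sound, and the constraints on $\rho$ you impose (all quantities like $K\rho$, $C'K\rho$, etc.\ kept below $1$) are exactly the ones needed to make Proposition \ref{Prop quasidist} and the measure bounds applicable.
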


Note that the above statement is nontrivial since the space $S$ is unbounded
and there is not a simple relation between $d$ and the Euclidean distance.
Since this property is better proved in an abstract context, we postpone its
proof to the next section, and proceed to conclude the proof of our main result:

\begin{theorem}
\label{Thm conclusion}For a suitable choice of the number $\rho_{0}$ appearing
in the definition of the kernel $k_{0}$ (see \S \ref{section nonsingular}),
for any $p\in\left(  1,\infty\right)  ,$ there exists a positive constant $c,$
depending on $p,N,p_{0},\nu$ and the matrix $B\ $such that%
\[
\left\Vert PV\left(  k_{0}\ast f\right)  \right\Vert _{L^{p}\left(  S\right)
}\leq c\left\Vert f\right\Vert _{L^{p}\left(  S\right)  }%
\]
for any $f\in L^{p}\left(  S\right)  .$
\end{theorem}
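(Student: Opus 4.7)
The plan is to pass from the local $L^{p}$ bound of Corollary \ref{Corollary estimate on a ball} to the global bound on $S$ by a patching argument based on Lemma \ref{Lemma Covering}. The key geometric observation enabling this reduction is that the kernel $k_{0}=\eta\,\partial_{x_{i}x_{j}}^{2}\gamma$ has compact support in the left-translation variable: $k_{0}(w)=0$ for $\|w\|>\rho_{0}$. Consequently $PV(k_{0}\ast f)(z)$ depends only on the values of $f$ at $\zeta\in S$ with $d(z,\zeta)\leq\rho_{0}$, so that if $\rho_{0}$ is chosen small enough compared with the covering radius, the domain of influence of each $z$ is contained in a single enlarged ball of the covering, where Corollary \ref{Corollary estimate on a ball} can be applied directly.

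I would fix the parameters as follows. Let $R_{0}$ be the constant of Proposition \ref{Prop check assumptions}, and let $c_{d}\geq 1$ denote the quasi-triangle constant of $d$ on $S$ provided by Proposition \ref{Prop quasidist}; set $K=2c_{d}$. Apply Lemma \ref{Lemma Covering} with $r_{0}=R_{0}/K$ to obtain $\rho\in(0,r_{0})$, an integer $M$, and points $\{z_{i}\}\subset S$ such that $S\subset\bigcup_{i}B(z_{i},\rho)$ and $\sum_{i}\chi_{B(z_{i},K\rho)}\leq M$. In the definition of the cutoff $\eta$ defining $k_{0}$, choose $\rho_{0}\leq\rho$. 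Then, for every $z\in B(z_{i},\rho)$ and every $\zeta$ with $k_{0}(\zeta^{-1}\circ z)\neq 0$, we have $d(z,\zeta)\leq\rho_{0}\leq\rho\leq 1$, and the quasi-triangle inequality yields $d(z_{i},\zeta)\leq c_{d}(\rho+\rho_{0})\leq K\rho<R_{0}$. Therefore
\[
PV(k_{0}\ast f)(z)=PV\!\int_{B(z_{i},K\rho)}k_{0}(\zeta^{-1}\circ z)\,f(\zeta)\,d\zeta\qquad\text{for every }z\in B(z_{i},\rho).
\]

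The next step is to bring Corollary \ref{Corollary estimate on a ball} into play via suitable cutoffs. For each $i$ I would pick $C^{\alpha}$ cutoff functions $a_{i},b_{i}$ with $\mathrm{sprt}\,a_{i},\,\mathrm{sprt}\,b_{i}\subset B(z_{i},R_{0})$, $a_{i}\equiv 1$ on $B(z_{i},\rho)$, $b_{i}\equiv 1$ on $B(z_{i},K\rho)$, and whose $C^{\alpha}$ norms (with respect to $d$) are bounded independently of $i$. The uniform bound rests on the left-invariance of $d$ under the group law $\circ$: since $d(g\circ z,g\circ\zeta)=\|(g\circ\zeta)^{-1}\circ(g\circ z)\|=d(z,\zeta)$ for every $g$, one may transport by left translation a pair of fixed model cutoffs $a_{0},b_{0}$ centered at the origin, setting $a_{i}(z)=a_{0}(z_{i}^{-1}\circ z)$ and $b_{i}(z)=b_{0}(z_{i}^{-1}\circ z)$; the $d$-Hölder norms of $a_{i},b_{i}$ then equal those of $a_{0},b_{0}$. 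The preceding identity thus rewrites as
\[
PV(k_{0}\ast f)(z)=PV\!\int_{B(z_{i},R_{0})}a_{i}(z)\,k_{0}(\zeta^{-1}\circ z)\,b_{i}(\zeta)\,f(\zeta)\,d\zeta,\qquad z\in B(z_{i},\rho),
\]
and Corollary \ref{Corollary estimate on a ball} gives $\|PV(k_{0}\ast f)\|_{L^{p}(B(z_{i},\rho)\cap S)}\leq C\,\|f\|_{L^{p}(B(z_{i},K\rho)\cap S)}$, with $C$ independent of $i$.

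Summing over $i$ and invoking the bounded overlap of the enlarged balls, one obtains
\[
\|PV(k_{0}\ast f)\|_{L^{p}(S)}^{p}\leq\sum_{i}\|PV(k_{0}\ast f)\|_{L^{p}(B(z_{i},\rho)\cap S)}^{p}\leq C^{p}\sum_{i}\|f\|_{L^{p}(B(z_{i},K\rho)\cap S)}^{p}\leq C^{p}M\,\|f\|_{L^{p}(S)}^{p},
\]
which is the desired global bound (extended from $C_{0}^{\infty}(S)$ to $L^{p}(S)$ by density). The step I expect to be most delicate is not the patching itself but the proof of Lemma \ref{Lemma Covering}: producing a locally finite covering of the unbounded strip $S$ by $d$-balls of a \emph{fixed} small radius is subtle because $d$ is only a local quasidistance and the Lebesgue measure fails a global doubling property on $S$, which is exactly why the authors defer its proof to an abstract argument in the following section.
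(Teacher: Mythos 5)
Your proposal is correct and follows essentially the same route as the paper: apply Lemma \ref{Lemma Covering} to produce a bounded-overlap cover of the strip by $d$-balls of fixed small radius, localize the singular integral on each ball via left-translated $C^{\alpha}$ cutoffs so that Corollary \ref{Corollary estimate on a ball} applies with constants independent of the center, and sum using the finite overlap. If anything your write-up is a bit more careful on two points that the paper glosses over: you exploit explicitly the exact left-invariance identity $d(g\circ z,g\circ\zeta)=d(z,\zeta)$ to justify that the $C^{\alpha}$ norms of $a_{i},b_{i}$ are uniform in $i$ (the paper's formula $a_i(z)=A(z^{-1}\circ z_i)$ is a right-type translate, for which this uniformity is less immediate), and you carry out the final summation at the level of $p$-th powers, which is the inequality that the bounded overlap actually yields (the paper's displayed chain $\|Tf\|_{L^p(S)}\le\sum_i\|f\|_{L^p(B_i)}\le M\|f\|_{L^p(S)}$ is only correct after raising to the $p$-th power).
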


\begin{proof}
Pick a cutoff function%
\begin{align*}
A  &  \in C_{0}^{\alpha}\left(  S\right)  \text{ such that:}\\
A\left(  z\right)   &  =1\text{ for }\left\Vert z\right\Vert <\rho_{0};\\
A\left(  z\right)   &  =0\text{ for }\left\Vert z\right\Vert >2\rho_{0}%
\end{align*}
where the number $\rho_{0}$, to be fixed later, is the same appearing in the
definition of the cutoff function $\eta$ and the kernel $k_{0}$ (see
(\ref{k_0}) in \S \ref{section nonsingular}). Let%
\[
a_{i}\left(  z\right)  =A\left(  z^{-1}\circ z_{i}\right)  \text{ for
}i=1,2,...;
\]
Since $k_{0}(\zeta^{-1}\circ z)$ vanishes for $d\left(  z,\zeta\right)
>\rho_{0},$ we have that%
\begin{equation}
z\in B(z_{i},\rho_{0})\;\text{and}\;k_{0}(\zeta^{-1}\circ z)\not =%
0\;\;\Longrightarrow\;\;\zeta\in B(z_{i},C\rho_{0}) \label{def C}%
\end{equation}
for some absolute constant $C$. Define a second cutoff function%
\begin{align*}
B  &  \in C_{0}^{\alpha}\left(  S\right)  \text{ such that:}\\
B\left(  z\right)   &  =1\text{ for }\left\Vert z\right\Vert <C\rho_{0};\\
B\left(  z\right)   &  =0\text{ for }\left\Vert z\right\Vert >2C\rho_{0}%
\end{align*}
where $C$ is the constant appearing in (\ref{def C}). Let%
\[
b_{i}\left(  z\right)  =B\left(  z^{-1}\circ z_{i}\right)  \text{ for
}i=1,2,.....
\]
Note that:%
\begin{align}
\left\Vert a_{i}\right\Vert _{C^{\alpha}}  &  =\left\Vert A\right\Vert
_{C^{\alpha}}\text{ for }i=1,2,...\label{cutoff norm}\\
\left\Vert b_{i}\right\Vert _{C^{\alpha}}  &  =\left\Vert B\right\Vert
_{C^{\alpha}}\text{ for }i=1,2,...\nonumber
\end{align}
Set
\[
k_{i}(z,\zeta)=k_{0}(\zeta^{-1}\circ z)a_{i}(z)b_{i}(\zeta).
\]
Let now $R_{0}$ be as in Proposition \ref{Prop check assumptions}; set
$r_{0}=R_{0}/2C$ and let us apply Lemma \ref{Lemma Covering} for this $r_{0}$:
there exists $\rho_{0}<r_{0}$ such that
\begin{align}
S  &  \subset\bigcup\limits_{i=1}^{\infty}B\left(  z_{i},\rho_{0}\right)
;\label{covering 1}\\
\sum_{i=1}^{\infty}\chi_{B\left(  z_{i},2C\rho_{0}\right)  }\left(  z\right)
&  \leq M\text{ \ }\forall z\in S. \label{covering 2}%
\end{align}
We eventually chose this value for the constant $\rho_{0}$.

Recall that $Tf=PV(k_{0}\ast f)$. By (\ref{covering 1}) we can write%
\begin{equation}
\left\Vert Tf\right\Vert _{L^{p}\left(  S\right)  }\leq\sum_{i=1}^{\infty
}\left\Vert Tf\right\Vert _{L^{p}\left(  B\left(  z_{i},\rho_{0}\right)
\right)  }.\label{conclusion 1}%
\end{equation}
On the other side, by (\ref{def C}) for any $z\in B(z_{i},\rho_{0})$ we have%
\begin{align*}
Tf(z) &  =PV\int_{\mathbb{R}^{N+1}}k_{0}\left(  \zeta^{-1}\circ z\right)
f(\zeta)d\zeta=\\
&  =a_{i}(z)P.V.\int_{\mathbb{R}^{N+1}}k_{0}\left(  \zeta^{-1}\circ z\right)
b_{i}(\zeta)f(\zeta)d\zeta=\int_{B(z_{i},2C\rho_{0})}k_{i}(z,\zeta
)f(\zeta)d\zeta\equiv T_{i}f(z)
\end{align*}
hence%
\begin{equation}
\sum_{i=1}^{\infty}\left\Vert Tf\right\Vert _{L^{p}\left(  B\left(  z_{i}%
,\rho_{0}\right)  \right)  }=\sum_{i=1}^{\infty}\left\Vert T_{i}f\right\Vert
_{L^{p}\left(  B\left(  z_{i},\rho_{0}\right)  \right)  }.\label{conclusion 2}%
\end{equation}
Since $2C\rho_{0}\leq R_{0},$ the kernel $k_{i}$ also satisfies the
assumptions of Proposition \ref{Prop check assumptions}. Hence by Corollary
\ref{Corollary estimate on a ball} we have
\begin{equation}
\left\Vert T_{i}f\right\Vert _{L^{p}\left(  B\left(  z_{i},2C\rho_{0}\right)
\right)  }\leq c\left\Vert f\right\Vert _{L^{p}\left(  B\left(  z_{i}%
,2C\rho_{0}\right)  \right)  }\label{conclusion 3}%
\end{equation}
with $c$ independent of $i$, by (\ref{cutoff norm}). By (\ref{covering 2}) to
(\ref{conclusion 3}) and  we conclude%
\[
\left\Vert Tf\right\Vert _{L^{p}\left(  S\right)  }\leq c\sum_{i=1}^{\infty
}\left\Vert f\right\Vert _{L^{p}\left(  B\left(  z_{i},2C\rho_{0}\right)
\right)  }\leq cM\left\Vert f\right\Vert _{L^{p}\left(  S\right)  }%
\]
which ends the proof.
\end{proof}

\bigskip

\begin{proof}
[Conclusion of the proof of Theorems \ref{Thm main} and
\ref{Thm main thm strip}]Theorem \ref{Thm conclusion} and Corollary
\ref{Coroll nonsingular} imply Theorem \ref{Thm main thm strip}, by
(\ref{repr formula}). As we have shown in \S \ref{section introduction},
Theorem \ref{Thm main thm strip} in turn implies (\ref{stima L_0 a}%
)-(\ref{stima L_0 b}) in Theorem \ref{Thm main}. To finish the proof of
Theorem \ref{Thm main} we are left to prove the weak $\left(  1,1\right)
$-estimates (\ref{weak a})-(\ref{weak b}). This will be done here.

Let $u\in C_{0}^{\infty}\left(  S\right)  $. By (\ref{repr formula}) in
\S \ref{section nonsingular} we can write, for any $\alpha>0$:%
\begin{align*}
&  \left\vert \left\{  z\in S:\left\vert \partial_{x_{i}x_{j}}^{2}u\left(
z\right)  \right\vert \geq\alpha\right\}  \right\vert \leq\\
&  \leq\left\vert \left\{  z\in S:\left\vert PV\left(  k_{0}\ast Lu\right)
\left(  z\right)  \right\vert \geq\frac{\alpha}{3}\right\}  \right\vert +\\
&  +\left\vert \left\{  z\in S:\left\vert \left(  k_{\infty}\ast Lu\right)
\left(  z\right)  \right\vert \geq\frac{\alpha}{3}\right\}  \right\vert +\\
&  +\left\vert \left\{  z\in S:\left\vert c_{ij}Lu\left(  z\right)
\right\vert \geq\frac{\alpha}{3}\right\}  \right\vert \\
&  \equiv A+B+C.
\end{align*}
Now, by Corollary \ref{Coroll nonsingular}%
\[
B+C\leq\frac{3}{\alpha}\left\{  \left\Vert k_{\infty}\ast Lu\right\Vert
_{L^{1}\left(  S\right)  }+\left\Vert c_{ij}Lu\right\Vert _{L^{1}\left(
S\right)  }\right\}  \leq\frac{c}{\alpha}\left\Vert Lu\right\Vert
_{L^{1}\left(  S\right)  }.
\]
To bound $A,$ we revise as follows the proof of Theorem \ref{Thm conclusion},
writing (with the same meaning of symbols and letting $f\equiv Lu$):%
\begin{align*}
A &  =\left\vert \left\{  z\in S:\left\vert Tf\left(  z\right)  \right\vert
\geq\frac{\alpha}{3}\right\}  \right\vert \leq\\
&  \leq\sum_{i=1}^{\infty}\left\vert \left\{  z\in B\left(  z_{i},\rho
_{0}\right)  :\left\vert Tf\left(  z\right)  \right\vert \geq\frac{\alpha}%
{3}\right\}  \right\vert =\\
&  =\sum_{i=1}^{\infty}\left\vert \left\{  z\in B\left(  z_{i},\rho
_{0}\right)  :\left\vert T_{i}f\left(  z\right)  \right\vert \geq\frac{\alpha
}{3}\right\}  \right\vert \leq\\
&  \leq\sum_{i=1}^{\infty}\left\vert \left\{  z\in B\left(  z_{i},2C\rho
_{0}\right)  :\left\vert T_{i}f\left(  z\right)  \right\vert \geq\frac{\alpha
}{3}\right\}  \right\vert \leq\\
&  \leq\sum_{i=1}^{\infty}\frac{c}{\alpha}\left\Vert f\right\Vert
_{L^{1}\left(  B\left(  z_{i},2C\rho_{0}\right)  \right)  }\leq\frac
{cM}{\alpha}\left\Vert f\right\Vert _{L^{1}\left(  S\right)  }%
\end{align*}
where we used the fact that $T_{i}$ is also weak $\left(  1,1\right)  $
continuous on $L^{1}\left(  B\left(  z_{i},2C\rho_{0}\right)  \right)  ,$ by
Theorem \ref{Thm Lp}. This proves the weak estimate on the strip:%
\begin{equation}
\left\vert \left\{  z\in S:\left\vert \partial_{x_{i}x_{j}}^{2}u\left(
z\right)  \right\vert \geq\alpha\right\}  \right\vert \leq\frac{c}{\alpha
}\left\Vert Lu\right\Vert _{L^{1}\left(  S\right)  }.\label{weak on the strip}%
\end{equation}

Next, we take a cutoff function $\psi\in C_{0}^{\infty}\left(  -1,1\right)  $
such that $\psi\left(  t\right)  \geq1$ in $\left[  -\frac{1}{2},\frac{1}%
{2}\right]  $ and, for any $u\in C_{0}^{\infty}\left(  \mathbb{R}^{N}\right)
,$ apply (\ref{weak on the strip}) to $\psi u,$ getting%
\begin{align*}
&  \left\vert \left\{  x\in\mathbb{R}^{N}:\left\vert \partial_{x_{i}x_{j}}%
^{2}u\left(  x\right)  \right\vert \geq\alpha\right\}  \right\vert \leq\\
&  \leq\left\vert \left\{  \left(  x,t\right)  \in\mathbb{R}^{N}\times\left[
-\frac{1}{2},\frac{1}{2}\right]  :\left\vert \psi\left(  t\right)
\partial_{x_{i}x_{j}}^{2}u\left(  x\right)  \right\vert \geq\alpha\right\}
\right\vert \leq\\
&  \leq\left\vert \left\{  z\in S:\left\vert \partial_{x_{i}x_{j}}^{2}\left(
u\psi\right)  \left(  z\right)  \right\vert \geq\alpha\right\}  \right\vert
\leq\\
&  \leq\frac{c}{\alpha}\left\Vert L\left(  u\psi\right)  \right\Vert
_{L^{1}\left(  S\right)  }\leq\\
&  \leq\frac{c}{\alpha}\left\{  \left\Vert {\mathcal{A}}u\right\Vert
_{L^{1}\left(  \mathbb{R}^{N}\right)  }+\left\Vert u\right\Vert _{L^{1}\left(
\mathbb{R}^{N}\right)  }\right\}  .
\end{align*}
So we have proved (\ref{weak a}); then (\ref{weak b}) follows from
(\ref{weak a}) using the equation, and this ends the proof.
\end{proof}

\section{A covering lemma}

To make our proof of Theorem \ref{Thm conclusion} complete, we are left to
prove Lemma \ref{Lemma Covering}. This is what we are doing to do now, by a
general abstract argument.

\begin{definition}
We say that $\left(  X,d,\mu\right)  $ is a \emph{space of locally homogeneous
type} if the following conditions hold:

\begin{enumerate}
\item[(i)] $d:X\times X\rightarrow\mathbb{R}_{+}$ is a function such that for
some constant $C>0$

\begin{enumerate}
\item[(i$_{1}$)] For every $x,y\in X,$ if $d\left(  y,x\right)  \leq1$ then
$d\left(  x,y\right)  \leq Cd\left(  y,x\right)  $

\item[(i$_{2}$)] For every $x,y,z\in X,$ if $d\left(  x,z\right)  \leq1$ and
$d\left(  y,z\right)  \leq1$ then%
\[
d\left(  x,y\right)  \leq C\left(  d\left(  x,z\right)  +d\left(  z,y\right)
\right)  .
\]

\end{enumerate}

\item[(ii)] $\mu$ is a positive measure defined on a $\sigma$-algebra of
subsets of $X$ which contains the $d$-balls%
\[
B\left(  x,\rho\right)  =\left\{  y\in X:d\left(  y,x\right)  <\rho\right\}
,\text{ \ \ }x\in X,\rho>0.
\]

\item[(iii)] There exists $R>0$ such that if $0<R_{1}<R_{2}\leq R$ then there
exists $C=C\left(  R_{1},R_{2}\right)  $ such that%
\begin{equation}
0<\mu\left(  B\left(  x,R_{2}\right)  \right)  \leq C\mu\left(  B\left(
x,R_{1}\right)  \right)  <\infty\text{ \ for any }x\in X. \label{doubling}%
\end{equation}

\end{enumerate}
\end{definition}

\begin{remark}
Note that $\left(  S,d,dxdt\right)  $ is a space of locally homogeneous type.
Namely, condition (i) follows from Proposition \ref{Prop quasidist}, (ii)
follows from Lemma \ref{Lemma topologies} and (iii) follows from Proposition
\ref{Prop measure balls}. Hence the following theorem will imply Lemma
\ref{Lemma Covering}, and therefore will conclude the proof of Theorem
\ref{Thm main thm strip}.
\end{remark}

\begin{theorem}
Let $\left(  X,d,\mu\right)  $ be a space of locally homogeneous type. Then
for every $r_{0}>0$ and $K>1,$ there exist $\rho\in\left(  0,r_{0}\right)  $,
a positive integer $M$ and a countable set $\left\{  x_{i}\right\}  _{i\in
A}\subset X$ such that:

\begin{enumerate}
\item
\[%
{\displaystyle\bigcup\limits_{i\in A}}
B\left(  x_{i},\rho\right)  =X;
\]

\item
\[
\sum_{i\in A}\chi_{B\left(  x_{i},K\rho\right)  }\leq M^{2}.
\]

\end{enumerate}
\end{theorem}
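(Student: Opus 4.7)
My plan is a standard Vitali-type construction adapted to the locally homogeneous setting, where quasi-symmetry, quasi-triangle, and doubling are only available for sufficiently small distances, so the whole argument has to be kept at a small scale from the outset.

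First I will fix $\rho\in(0,r_{0})$ small enough that every radius that arises in the argument (concretely, the multiples $C\rho$, $CK\rho$, $C^{2}K\rho$, $\rho/(2C)$, etc.\ produced by iterating the quasi-triangle and quasi-symmetry constants of (i) a bounded number of times, depending only on $K$) stays below $\min(1,R)$, so that (i$_{1}$), (i$_{2}$), and the doubling condition (\ref{doubling}) may all be invoked freely. Then, by Zorn's lemma, I pick a maximal family $\{x_{i}\}_{i\in A}\subset X$ satisfying $d(x_{i},x_{j})\geq\rho$ for every $i\neq j$. Countability of $A$ follows because the small balls $B(x_{i},\rho/(2C))$ are pairwise disjoint (a direct consequence of (i$_{2}$)) and each has positive, finite measure by (iii), so only countably many of them can be packed into any fixed ball of $X$; a routine exhaustion of $X$ by a countable family of such balls yields global countability.

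Conclusion 1 is immediate from maximality: if there were a point $y\in X$ with $d(y,x_{i})\geq\rho$ for every $i$, then $\{x_{i}\}\cup\{y\}$ would still be $\rho$-separated, contradicting maximality; hence $y\in B(x_{j},\rho)$ for some $j$.

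For Conclusion 2, fix $z\in X$ and set $I=\{i\in A:z\in B(x_{i},K\rho)\}$. Iterated use of (i$_{1}$)--(i$_{2}$) produces constants $C',C''>0$, depending only on $K$ and on the structural constants of $d$, such that for every $i\in I$,
\[
B(x_{i},\rho/(2C))\subset B(z,C'\rho)\subset B(x_{i},C''\rho).
\]
The disjointness of the small balls combined with the left inclusion gives $\sum_{i\in I}\mu(B(x_{i},\rho/(2C)))\leq\mu(B(z,C'\rho))$, while the right inclusion together with (\ref{doubling}) applied at $x_{i}$ between the comparable radii $\rho/(2C)$ and $C''\rho$ gives $\mu(B(z,C'\rho))\leq C_{1}\mu(B(x_{i},\rho/(2C)))$ with $C_{1}$ independent of $i\in I$. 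Comparing the two estimates yields $|I|\leq C_{1}$, which is the required uniform overlap bound; setting $M=\lceil C_{1}^{1/2}\rceil$ gives the form stated in the theorem, the square absorbing the two doubling steps used to pass from scale $\rho$ to scale $K\rho$ and back.

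The only real obstacle is bookkeeping: one must ensure that every auxiliary radius appearing in the inclusions above lies in the regime in which (i$_{1}$), (i$_{2}$), and (\ref{doubling}) are effective. This is precisely what forces the smallness of $\rho$ at the very beginning, and it is the reason the statement only asserts the existence of some $\rho\in(0,r_{0})$ rather than allowing an arbitrary one.
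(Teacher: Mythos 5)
Your strategy — a maximal packing followed by volume/doubling estimates to control overlap — is the same one the paper uses, but you build the packing from a maximal $\rho$-separated set where the paper starts from a maximal family of pairwise disjoint balls of radius $\rho/(C(C+1))$. When $d$ is only quasi-symmetric this is not a cosmetic change, and it creates a real gap in your proof of conclusion (1). Maximality of a $\rho$-separated family $\{x_i\}$ (meaning $d(x_i,x_j)\geq\rho$ for every ordered pair $i\neq j$) tells you that, for every $y\in X$, there is some $i$ with \emph{either} $d(y,x_i)<\rho$ \emph{or} $d(x_i,y)<\rho$; your sentence ``if $d(y,x_i)\geq\rho$ for every $i$, then $\{x_i\}\cup\{y\}$ would still be $\rho$-separated'' is not correct, because adding $y$ also requires $d(x_i,y)\geq\rho$ for every $i$, which does not follow. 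In the second alternative, (i$_1$) only yields $d(y,x_i)\leq C\,d(x_i,y)<C\rho$, so what you actually obtain is $X=\bigcup_i B(x_i,C\rho)$, not $X=\bigcup_i B(x_i,\rho)$. A related constant slip occurs in your claimed disjointness of the balls $B(x_i,\rho/(2C))$: if $z$ lies in two of them, (i$_1$) and (i$_2$) give only $d(x_i,x_j)<C(C+1)\cdot\rho/(2C)=\tfrac{C+1}{2}\rho$, which for $C>1$ does not contradict $d(x_i,x_j)\geq\rho$; the radius that works is $\rho/(C(C+1))$, exactly the one the paper chooses. Both gaps are fixable (shrink the separation parameter, or accept the covering radius $C\rho$ and rename), and your argument for conclusion (2) is sound up to the same constant bookkeeping, but as written the proof of conclusion (1) contains a false step precisely where the asymmetry of $d$ bites — which is the subtlety the paper's disjoint-ball formulation is designed to avoid.
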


\begin{proof}
First of all, we claim that for any $\rho>0,$ $X$ admits a maximal countable
family of disjoint balls of radius $\rho.$ Namely: the existence of a maximal
family (of arbitrary cardinality) of disjoint balls of radius $\rho$ follows
by Zorn's Lemma; let us show that this family $\left\{  B\left(  x_{\alpha
},\rho\right)  \right\}  _{\alpha}$ must be countable. Otherwise, since for
any fixed $x_{0}\in X,$%
\[
X=%
{\displaystyle\bigcup\limits_{n=1}^{\infty}}
B\left(  x_{0},n\right)  ,
\]
at least one ball $B\left(  x_{0},n\right)  $ should contain an uncountable
family of disjoint balls of radius $\rho$. By (\ref{doubling}), every such
ball has positive measure, and this would imply that $B\left(  x_{0},n\right)
$ has infinite measure, which contradicts (\ref{doubling}). This proves the claim.

Then, let $\left\{  B\left(  x_{i},\frac{\rho}{C\left(  C+1\right)  }\right)
\right\}  _{i\in A}$ be a countable maximal family of disjoint balls.

Fix $x\in X.$ There exists $i\in A$ such that%
\[
B\left(  x,\frac{\rho}{C\left(  C+1\right)  }\right)  \cap B\left(
x_{i},\frac{\rho}{C\left(  C+1\right)  }\right)  \neq\emptyset.
\]
To estimate $d\left(  x_{i},x\right)  ,$ we consider $y\in B\left(
x,\frac{\rho}{C\left(  C+1\right)  }\right)  \cap B\left(  x_{i},\frac{\rho
}{C\left(  C+1\right)  }\right)  ,$ and we find%
\[
d\left(  x_{i},x\right)  \leq C\left(  d\left(  x_{i},y\right)  +d\left(
y,x\right)  \right)  <C\left(  \frac{\rho}{C\left(  C+1\right)  }+C\cdot
\frac{\rho}{C\left(  C+1\right)  }\right)  =\rho
\]
where, to apply (i$_{1}$)-(i$_{2}$) in the definition of space of locally
homogeneous type, we have assumed $\rho\leq1$. This proves (1).

To prove (2), fix an arbitrary $i\in A;$ we want to estimate how many $j\in A$
satisfy the property%
\begin{equation}
B\left(  x_{i},K\rho\right)  \cap B\left(  x_{j},K\rho\right)  \neq\emptyset.
\label{disjoint}%
\end{equation}
Fix $x_{i}$ and $x_{j}$ and suppose there exists $y\in B\left(  x_{i}%
,K\rho\right)  \cap B\left(  x_{j},K\rho\right)  .$ We assume $K\rho\leq1;$
hence%
\[
d\left(  x_{i},x_{j}\right)  \leq C\left(  d\left(  x_{i},y\right)  +d\left(
y,x_{j}\right)  \right)  \leq C\left(  K\rho+CK\rho\right)  =C\left(
1+C\right)  K\rho,
\]
and we assume $C\left(  1+C\right)  K\rho\leq1.$ Now suppose that for
$j=1,2,...,N$ we have (\ref{disjoint}); we want to estimate $N$.

Take $z\in B\left(  x_{j},K\rho\right)  .$ Since $K\rho\leq1$ and $d\left(
x_{i},x_{j}\right)  \leq1$ we have%
\begin{align*}
d\left(  x_{i},z\right)   &  \leq C\left(  d\left(  x_{i},x_{j}\right)
+d\left(  x_{j},z\right)  \right)  \leq C\left(  C\left(  1+C\right)
K\rho+K\rho\right)  =\\
&  =K\rho\left(  C\left(  C^{2}+C+1\right)  \right)  \equiv K^{\prime}\rho
\end{align*}
with $K^{\prime}>1.$ The previous computation shows that%
\[%
{\displaystyle\bigcup\limits_{j=1}^{N}}
B\left(  x_{j},K\rho\right)  \subset B\left(  x_{i},K^{\prime}\rho\right)  .
\]
Since by construction the balls $B\left(  x_{i},\frac{\rho}{C\left(
C+1\right)  }\right)  $ are pairwise disjoint, we have%
\begin{align*}
\sum_{j=1}^{N}\mu\left(  B\left(  x_{j},\frac{\rho}{C\left(  C+1\right)
}\right)  \right)   &  =\mu\left(
{\displaystyle\bigcup\limits_{j=1}^{N}}
B\left(  x_{j},\frac{\rho}{C\left(  C+1\right)  }\right)  \right)  \leq\\
&  \leq\mu\left(
{\displaystyle\bigcup\limits_{j=1}^{N}}
B\left(  x_{j},K\rho\right)  \right)  \leq\mu\left(  B\left(  x_{i},K^{\prime
}\rho\right)  \right)  .
\end{align*}
Assuming $K^{\prime}\rho\leq R$ (with $R$ as in (iii) of the above
definition), we also have, for some constant $M$ only depending on $C\ $and
$\rho,$%
\[
\sum_{j=1}^{N}\mu\left(  B\left(  x_{j},\frac{\rho}{C\left(  C+1\right)
}\right)  \right)  \leq M\mu\left(  B\left(  x_{i},\frac{\rho}{C\left(
C+1\right)  }\right)  \right)  .
\]
Now fix any $j=1,2,...,N.$ Note that $i$ satisfies (\ref{disjoint}); repeating
the previous argument exchanging $i$ with $j$ we get%
\[
\mu\left(  B\left(  x_{i},\frac{\rho}{C\left(  C+1\right)  }\right)  \right)
\leq\mu\left(  B\left(  x_{j},K^{\prime}\rho\right)  \right)  \leq M\mu\left(
B\left(  x_{j},\frac{\rho}{C\left(  C+1\right)  }\right)  \right)  .
\]
We have found that%
\[
\sum_{k=1}^{N}\mu\left(  B\left(  x_{k},\frac{\rho}{C\left(  C+1\right)
}\right)  \right)  \leq M^{2}\mu\left(  B\left(  x_{j},\frac{\rho}{C\left(
C+1\right)  }\right)  \right)  \text{ for any }j=1,2,...,N.
\]
Letting
\[
a=\min_{j=1,2,...,N}\mu\left(  B\left(  x_{j},\frac{\rho}{C\left(  C+1\right)
}\right)  \right)
\]
we get%
\[
Na\leq\sum_{k=1}^{N}\mu\left(  B\left(  x_{k},\frac{\rho}{C\left(  C+1\right)
}\right)  \right)  \leq M^{2}a
\]
and since, by (iii), $0<a<\infty,$ we infer $N\leq M^{2},$ which is (2),
provided $\rho$ satisfies all the conditions we have imposed so far:%
\[
\rho\leq1;\text{ }K\rho\leq1;K^{\prime}\rho\equiv K\rho\left(  C\left(
1+C\left(  1+C\right)  \right)  \right)  \leq R.
\]

\end{proof}

\bigskip

\end{document}